\newcommand{\Z}{\mathbb{Z}}
\renewcommand\S{\mathcal S}
\newtheorem{theorem}{Theorem}[section]
\newtheorem{lemma}[theorem]{Lemma}
\newtheorem{proposition}[theorem]{Proposition}
\newtheorem{conjecture}[theorem]{Conjecture}
  \newtheorem{question}[theorem]{Question}
\title{Pattern avoidance and the fundamental bijection}
\author[1]{Kassie Archer}
\author[1]{Robert P. Laudone}
\affil[1]{{\small Department of Mathematics, United States Naval Academy, Annapolis, MD 21402}}
\affil[ ]{{\small Email: karcher@usna.edu, laudone@usna.edu}}
\date{}
\begin{document}

\maketitle

\begin{abstract}
The fundamental bijection is a bijection $\theta:\mathcal{S}_n\to\mathcal{S}_n$ in which one uses the standard cycle form of one permutation to obtain another permutation in one-line form. In this paper, we enumerate the set of permutations $\pi\in\mathcal{S}_n$ that avoids a pattern $\sigma\in\mathcal{S}_3$, whose image $\theta(\pi)$ also avoids $\sigma$. We additionally consider what happens under repeated iterations of $\theta$; in particular, we enumerate permutations $\pi\in\mathcal{S}_n$ that have the property that $\pi$ and its first $k$ iterations under $\theta$ all avoid a pattern $\sigma$. Finally, we consider permutations with the property that $\pi=\theta^2(\pi)$ that avoid a given pattern $\sigma$, and end the paper with some directions for future study.
%We consider how avoidance in one-line notation interacts with algebraic properties of the image under the fundamental bijection. We enumerate permutations avoiding a pattern of size three whose image under the fundamental bijection, and repeated iterations of the fundamental bijection, also avoids this pattern. We then enumerate permutations who avoid a pattern of size three and are fixed after two applications of the fundamental bijection.
\end{abstract}

\section{Introduction}

%The \emph{fundamental bijection} is a bijection on the set of permutations involving both the one-line form of a permutation and its standard cycle form.  
%Let $S_n$ denote the symmetric group on $[n] = \{1,\dots,n\}$. There are many ways to represent a permutation $\pi \in S_n$. One of the most common is one-line notation, $\pi = \pi_1\pi_2 \cdots \pi_n$ where $\pi_i = \pi(i)$. It is in this notation that one most commonly discusses avoidance and containment in a permutation. We say that a permutation $\pi$ {\em contains} $\tau = \tau_1\cdots \tau_k$ if there are indices $i_1 < i_2 < \cdots < i_k$ with $\pi_{i_r} < \pi_{i_s}$ if and only if $\tau_r < \tau_s$. If $\pi$ does not contain $\tau$, we say $\pi$ {\em avoids} $\tau$. For example, the permutation $\pi = 41352$ contains the permutation $\tau = 213$ and avoids $\sigma = 123$.

%Another way to represent $\pi$ is using {\em cycle notation}, expressing it as a product of disjoint cycles. For example, the permutation $\pi = 41352$ written in its one-line notation can also be expressed as $\pi = (5,2,1,4)(3)$. 
The problem of understanding how cycle type and other algebraic properties of a permutation connect to pattern avoidance in one-line notation has proven challenging. For example, it is still an open question to determine the number of cyclic permutations that avoid a single pattern of length three in their one-line notation. Nevertheless, some progress on this general problem has been made. This progress includes enumeration of pattern-avoiding permutations composed of cycles of a restricted size \cite{AG21, AL, BD19, DRS07, GM02}, cyclic permutations avoiding sets of patterns \cite{A22, AL20, BC19, H19}, and cyclic permutations avoiding a pattern in both its one-line and cycle form \cite{ABBGJ23}. 

The work done in \cite{ABBGJ23} involves avoidance in a cyclic permutation and cycle form.  In this paper, we study all pattern-avoiding permutations and their image under the {\em fundamental bijection}. This bijection (described in \cite[Page 30]{S11} and in Section~\ref{sec:prelim} of this paper) involves writing a permutation in its standard cycle form, and reading off a new permutation in its one-line notation. In a sense, by asking a permutation and its image under the fundamental bijection to both avoid a given pattern, we are requiring a permutation to avoid that pattern in both its one-line form and its cycle form.
%its image under the {\em fundamental bijection}; they just start the cycle at $1$ instead of $n$.  Stanley defines this bijection by first writing the cycle decomposition of $\pi$ in its {\em standard form}: each cycle is written with its largest element first and cycles are listed in increasing order by their largest entry. For example, if $\pi = (14)(2)(375)(6)$, the standard form of $\pi$ is $(2)(41)(6)(753)$. $\theta(\pi)$ is then the permutation you get by removing the parentheses from the standard form. So in our example $\theta(\pi) = 2416753$. This illustrates one of the potential appeals of studying pattern avoidance in the image of the fundamental bijection; it sees the cycle decomposition of the original permutation and so can help establish a connection between the one-line and cyclic representations of a permutation.

Pattern avoidance in the image of the fundamental bijection has also recently made an appearance in \cite{BT23} as a way of  characterizing so-called shallow permutations.
In \cite{DG77}, Diaconis and Graham consider metrics for a permutation $\pi$: the number of inversions $i(\pi)$, the total displacement $d(\pi)$, and the reflection length $t(\pi)$. They found that for any permutation $\pi,$ $i(\pi)+t(\pi)\leq d(\pi)\leq 2i(\pi),$ and they show that permutations that satisfy  that the upper bound is an equality are exactly those permutations that avoid 321. Ones where equality holds for the lower bound are called \emph{shallow permutations}, and have recently been shown to be equivalent to so-called unlinked permutations, whose cycle diagram corresponds to the knot diagram of an unlink \cite{W22, CM24}.
In \cite{BT23}, Berman and Tenner prove that the shallow permutations can also be characterized in terms of pattern avoidance; in particular, shallow permutations are those permutations $\pi$ so that the image of $\pi$ under the fundamental bijection avoids a set of vincular patterns. They use this characterization to enumerate shallow cycles and involutions.

In this paper, we consider how avoidance in one-line notation interacts with the fundamental bijection. In particular, we enumerate the set of permutations avoiding a given pattern of size 3 whose image under the fundamental bijection also avoids this pattern (Section \ref{sec: SinglePattern}). The nontrivial results are summarized in Table \ref{tab:onepat}. We then ask the same question for higher interations of the fundamental bijection: how many permutations avoid a given pattern and continue to avoid that pattern under $k$ compositions of the fundamental bijection? This turns out to be a challenging question; we prove some results and conjecture the answer to others (Section \ref{sec: HigherIterations}). Next, we study the number of permutations avoiding a pattern of size three that are fixed by $k$ iterations of the fundamental bijection; meaning after $k$ iterations of the fundamental bijection we return to the original permutation (Section \ref{sec: fixedOrder}). We answer this question completely for permutations fixed after two iterations avoiding a single pattern. We then notice and conjecture some mysterious periodicity as $k$ increases. We conclude by providing a lower bound on the number of permutations fixed after two iterations, along with more open questions (Section \ref{sec: Conclusion}).

\section{Preliminaries and Notation}\label{sec:prelim}

Let $\S_n$ denote the symmetric group on $[n] = \{1,2,\dots,n\}$. There are many ways to represent a permutation $\pi \in \S_n$. One common way is one-line notation, $\pi = \pi_1\pi_2 \ldots \pi_n$ where $\pi_i = \pi(i)$. It is in this notation that one most commonly discusses avoidance and containment in a permutation. We say that a permutation $\pi$ \emph{contains} $\tau = \tau_1\tau_2\ldots \tau_k$ if there are indices $i_1 < i_2 < \cdots < i_k$ with $\pi_{i_r} < \pi_{i_s}$ if and only if $\tau_r < \tau_s$. If $\pi$ does not contain $\tau$, we say $\pi$ \emph{avoids} $\tau$. For example, the permutation $\pi = 41253$ contains the permutation $213$ since 415 is a subsequence of $\pi$ in the same order as $213$ and it avoids $321$ since there is no length 3 decreasing subsequence of $\pi$.

Another way to represent $\pi$ is using \emph{cycle notation}, expressing it as a product of disjoint cycles. In this paper, we will always express permutation in their \emph{standard cycle notation}, in which we write cycles with their largest element first, and order the cycles by that largest element. For example, the permutation $\pi = 41352987$ written in its one-line notation can also be expressed in standard cycle notation as $\pi = (3)(5,2,1,4)(8)(9,7)$. 

The \emph{fundamental bijection} $\theta:\S_n\to\S_n$ (described in \cite[Page 30]{S11}) utilizes both the one-line and standard cycle notation of permutations.
For a permutation $\pi \in \S_n$, we obtain $\theta(\pi)$ by writing $\pi$ in its standard cycle notation and reading a new permutation $\theta(\pi)$ in its one-line notation, i.e., by removing the parentheses from the standard cycle notation of $\pi$. For example, using the example in the previous paragraph, we can see that $\theta(41352987) = 35214897.$
% Stanley defines this bijection by first writing the cycle decomposition of $\pi$ in its {\em standard form}: each cycle is written with its largest element first and cycles are listed in increasing order by their largest entry. For example, if $\pi = (14)(2)(375)(6)$, the standard form of $\pi$ is $(2)(41)(6)(753)$. $\theta(\pi)$ is then the permutation you get by removing the parentheses from the standard form. So in our example $\theta(\pi) = 2416753$. When we refer to the $k$-th cycle of a permutation, we always mean the $k$-th cycle in its standard form.
For clarity of exposition, we will often write $\theta(\pi)$ as $\hat\pi$. 

We let $\mathcal{T}_n(\sigma)$ denote the set of permutations $\pi\in\S_n$ so that both $\pi$ and $\theta(\pi)$ avoid a pattern $\sigma$. For example, we have $41352987\in \mathcal{T}_n(4321)$ since both 41352987 and $\theta(41352987)=35214897$ avoid the pattern 4321. We let $t_n(\sigma)=|\mathcal{T}_n(\sigma)|$ and  we  denote the generating function for these values as $T_\sigma(x) = \sum_{n\geq 0} t_n(\sigma)x^n.$ 

Given $\pi \in \S_n$, we denote by $\pi^c$ the {\em complement} of $\pi$ obtained by replacing $i$ in the one line notation for $\pi$ with $n-i+1$ for each $1\leq i \leq n$. For example, if $\pi = 4132$ then $\pi^c = 1423$. We denote by $\pi^r$ the {\em reverse} of $\pi$ obtained by reading the one-line notation of $\pi$ in reverse (so that $\pi_i^r = \pi_{n+1-i})$. For example, if $\pi = 4132$ then $\pi^r = 2314$. We will sometimes combine these operations and for ease of notation will denote the reverse complement by $\pi^{rc}$, which we note is also equal to $\pi^{cr}$. 

Given two permutations $\pi \in \S_n$ and $\tau \in \S_m$, the {\em direct sum} of $\pi$ and $\tau$, denoted $\pi \oplus \tau \in \S_{n+m}$ is the permutation defined by
\[
(\pi \oplus \tau)(i) =
\begin{cases}
    \pi(i) &1 \leq i \leq n\\
    \tau(i-n) + n & n+1 \leq i \leq n+m
\end{cases}.
\]
For example, $3412\oplus321 = 3412765$ and $21\oplus 1\oplus 312\oplus1 = 2136457$. As shorthand, we will write a permutation obtained from a series of direct sums $\pi=\tau_1\oplus\tau_2\oplus\cdots\oplus\tau_k$ as $\pi=\bigoplus_{i=1}^k\tau_i.$

We say a permutation $\pi \in \S_n$ is {\em reducible} if $\pi = \sigma \oplus \tau$ for $\sigma \in \S_{n-k}$ and $\tau \in \S_k$ with $1 \leq k \leq n-1$. We will call a permutation $\pi$ {\em irreducible} (sometimes called indecomposable or connected) if it is not reducible. For example, $4123$ is irreducible while $132 = 1 \oplus 21$ is reducible. The following result about how $\theta$ interacts with direct sum decompositions will be useful in the coming sections.

\begin{lemma} \label{lem: thetaCommutes}
    Given permutations $\pi \in \S_n$ and $\tau \in \S_m$, $\theta(\pi \oplus \tau) = \theta(\pi) \oplus \theta(\tau)$.
\end{lemma}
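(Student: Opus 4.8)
The plan is to show that the standard cycle notation of $\pi \oplus \tau$ is, in a precise sense, the concatenation of the standard cycle notation of $\pi$ with a shifted copy of the standard cycle notation of $\tau$, and that this concatenation survives the removal of parentheses exactly as a direct sum demands. First I would unpack the definition of $\pi \oplus \tau$: its action fixes $\{1,\dots,n\}$ setwise and acts there as $\pi$, and it maps $\{n+1,\dots,n+m\}$ to itself acting as a shifted copy of $\tau$. Consequently the cycles of $\pi \oplus \tau$ split into two disjoint families: the cycles of $\pi$ (all of whose entries lie in $[n]$) and the cycles of $\tau$ with every entry increased by $n$ (all of whose entries lie in $\{n+1,\dots,n+m\}$).

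The key step is to verify that standard cycle notation respects this split. Recall that standard cycle form writes each cycle with its largest element first and orders cycles by increasing largest element. Since every entry of a $\pi$-cycle is at most $n$ and every entry of a shifted $\tau$-cycle is at least $n+1$, each individual cycle is written identically in isolation and as part of $\pi\oplus\tau$ (the ``largest element first'' rule is internal to a cycle), and moreover every $\pi$-cycle has a smaller leading element than every shifted $\tau$-cycle. Therefore, when the cycles of $\pi \oplus \tau$ are sorted by their largest element, all the $\pi$-cycles come first, in exactly the order they appear in the standard cycle form of $\pi$, followed by all the shifted $\tau$-cycles, in exactly the order they appear in the standard cycle form of $\tau$ (shifting every entry by $n$ preserves the relative order of the leading elements).

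Reading off $\theta(\pi \oplus \tau)$ then amounts to concatenating the one-line word obtained from $\pi$'s standard cycle form with the one-line word obtained from $\tau$'s standard cycle form after adding $n$ to each of its entries. By definition this is precisely $\theta(\pi)$ followed by $\theta(\tau)$ with every entry increased by $n$, which is exactly $\theta(\pi) \oplus \theta(\tau)$. I would make this last identification rigorous by comparing values position by position against the definition of direct sum given above.

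I expect the only real subtlety to be the bookkeeping around the ordering of cycles, namely confirming that the ``order cycles by largest element'' convention does not interleave $\pi$-cycles with shifted $\tau$-cycles and that it preserves the internal order within each group; this follows cleanly from the separation of value ranges, but it is the step that makes the two halves of the standard cycle word align with the two summands. Everything else is a direct unwinding of the definitions of standard cycle notation, $\theta$, and $\oplus$, so the argument should be short once the cycle decomposition is established.
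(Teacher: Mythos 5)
Your proposal is correct and follows essentially the same approach as the paper: both arguments rest on the observation that the value ranges $\{1,\dots,n\}$ and $\{n+1,\dots,n+m\}$ are each preserved, so the standard cycle form of $\pi\oplus\tau$ lists all the $\pi$-cycles before the shifted $\tau$-cycles, and removing parentheses yields the direct sum. Your write-up simply spells out the bookkeeping that the paper leaves implicit.
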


\begin{proof}
Since all elements in $\{1,2,\ldots,n\}$ map to themselves and all elements $\{n+1,\ldots, n+m\}$ similarly map to themselves in the permutation $\pi\oplus\tau$, the cycles involving the elements in $\{1,2,\ldots,n\}$ will appear before those involving $\{n+1,\ldots, n+m\}$ in the standard cycle form of $\pi\oplus\tau,$ and thus in $\theta(\pi\oplus\tau)$. The result must follow.
%Let $\hat\pi = \theta(\pi)$ and $\hat\tau = \theta(\tau)$. Then $\pi = (\hat\pi_1 \cdots \hat\pi_{k_1})(\hat\pi_{k_1 + 1} \cdots \hat\pi_{k_2}) \cdots (\hat\pi_{k_{j-1} + 1} \cdots \hat\pi_{k_j})$ and $\tau = (\hat\tau_1 \cdots \hat\tau_{i_1})(\hat\tau_{i_1 + 1} \cdots \hat\tau_{i_2}) \cdots (\hat\tau_{i_{\ell-1} + 1} \cdots \hat\tau_{i_\ell})$ written in standard form. By definition 
%\[
%\pi \oplus \tau = (\hat\pi_1 \cdots \hat\pi_{k_1})\cdots (\hat\pi_{k_{j-1} + 1} \cdots \hat\pi_{k_j})(\hat\tau_1 + n \cdots \hat\tau_{i_1} + n)\cdots (\hat\tau_{i_{\ell-1} + 1} + n \cdots \hat\tau_{i_\ell} + n).
%\]
%So $\theta(\pi \oplus \tau) = \hat\pi_1 \cdots \hat\pi_{k_1}\cdots \hat\pi_{k_{j-1} + 1} \cdots \hat\pi_{k_j}(\hat\tau_1 + n) \cdots (\hat\tau_{i_1} + n)\cdots (\hat\tau_{i_{\ell-1} + 1} + n) \cdots (\hat\tau_{i_\ell} + n)$. But this is precisely $\theta(\pi) \oplus \theta(\tau)$.
\end{proof}

An immediate corollary of this result is that $\theta(\bigoplus_i \pi_i) = \bigoplus_i \theta(\pi_i)$; that is, $\theta$ commutes with direct sums. Another nice property of $\theta$ is that it preserves irreducibility, as stated in the proposition below.

\begin{proposition}
    Given $\pi \in \S_n$, $\pi$ is irreducible if and only if $\theta(\pi)$ is irreducible.
\end{proposition}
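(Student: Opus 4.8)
The plan is to deduce the statement from Lemma~\ref{lem: thetaCommutes} together with the fact that $\theta$ is a bijection of $\S_n$ onto itself. Since $\pi$ is irreducible exactly when it is not reducible, it suffices to prove the equivalent claim that $\pi$ is reducible if and only if $\theta(\pi)$ is reducible, and I will establish the two implications separately.

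One direction is immediate from the lemma. If $\pi = \sigma \oplus \tau$ with $\sigma \in \S_{n-k}$ and $\tau \in \S_k$ for some $1 \le k \le n-1$, then Lemma~\ref{lem: thetaCommutes} gives $\theta(\pi) = \theta(\sigma) \oplus \theta(\tau)$. Because $\theta$ preserves the size of a permutation, we have $\theta(\sigma) \in \S_{n-k}$ and $\theta(\tau) \in \S_k$, so $\theta(\pi)$ is reducible. Taking the contrapositive, if $\theta(\pi)$ is irreducible then $\pi$ is irreducible.

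For the converse the key observation is that $\theta^{-1}$ also commutes with direct sums, and this is precisely where bijectivity enters. Given $\alpha \in \S_{n-k}$ and $\beta \in \S_k$, I would set $\sigma = \theta^{-1}(\alpha)$ and $\tau = \theta^{-1}(\beta)$; applying Lemma~\ref{lem: thetaCommutes} to $\sigma$ and $\tau$ yields $\theta(\sigma \oplus \tau) = \theta(\sigma) \oplus \theta(\tau) = \alpha \oplus \beta$, and applying $\theta^{-1}$ to both sides gives $\theta^{-1}(\alpha \oplus \beta) = \theta^{-1}(\alpha) \oplus \theta^{-1}(\beta)$. Now if $\theta(\pi)$ is reducible, say $\theta(\pi) = \alpha \oplus \beta$ with $1 \le k \le n-1$, then $\pi = \theta^{-1}(\alpha \oplus \beta) = \theta^{-1}(\alpha) \oplus \theta^{-1}(\beta)$ with $\theta^{-1}(\alpha) \in \S_{n-k}$ and $\theta^{-1}(\beta) \in \S_k$, so $\pi$ is reducible.

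Combining the two implications shows that $\pi$ is reducible exactly when $\theta(\pi)$ is, which is the desired equivalence. I expect no serious obstacle: everything reduces to Lemma~\ref{lem: thetaCommutes}, and the only real subtlety is recognizing that the lemma as stated supplies only one of the two implications, so the converse must be obtained by transporting the lemma through the inverse map $\theta^{-1}$ rather than reproving it from scratch.
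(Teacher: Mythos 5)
Your proof is correct and follows essentially the same route as the paper: reduce to the statement about reducibility and combine Lemma~\ref{lem: thetaCommutes} with the bijectivity of $\theta$. Your explicit verification that $\theta^{-1}$ also commutes with direct sums simply makes precise the step the paper compresses into the remark that $\theta$ restricts to a bijection from reducible permutations to reducible permutations.
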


\begin{proof}
    Since $\theta$ is a bijection, it suffices to prove that $\pi$ is reducible if and only if $\theta(\pi)$ is. This follows immediately from Lemma \ref{lem: thetaCommutes} since $\pi$ is reducible if and only if $\pi = \sigma \oplus \tau$. By Lemma \ref{lem: thetaCommutes} and the fact that $\theta$ is a bijection, this happens if and only if $\theta(\pi) = \theta(\sigma) \oplus \theta(\tau)$.

    This means that $\theta$ is a bijection from reducible permutations to reducible permutations and therefore must also be a bijection from irreducible permutations to irreducible permutations.
\end{proof}

\section{Avoiding a single pattern} \label{sec: SinglePattern}

In this section, we enumerate the set of permutations $\pi\in\S_n$ so that both $\pi$ and $\theta(\pi)$ avoid a pattern $\sigma\in\S_3$.
A summary of these results can be found in Table~\ref{tab:onepat}.

\renewcommand{\arraystretch}{2.2}
 \begin{table}[ht]
            \begin{center}
\begin{tabular}{|c|c|c|}
    \hline
    $\sigma$ & $t_n(\sigma)$ & Theorem \\
    \hline
    \hline
   $123$ & 0
    &
    Theorem~\ref{thm:123}  \\[4pt]
    \hline
    132 & $\displaystyle\sum_{k=0}^{n+1} F_{n+1-k} F_k$  & Theorem~\ref{thm:132} \\[4pt]
    \hline
    213 & $2 F_{n+2} + n^2 - 6n +4$ & Theorem~\ref{thm:213} \\[4pt]
    \hline
    231 & $2^{n-1}$ & Theorem~\ref{thm:231} \\[4pt]
    \hline
    312 & $2^{n-1}$ & Theorem~\ref{thm:312} \\[4pt]
    \hline
    321 & g.f. $T_{321}(x) = \dfrac{2x^2}{2x(1-x)-1+\sqrt{1-4x^2}}$ & Theorem~\ref{thm: 321} \\[4pt]
    \hline
\end{tabular}
            \end{center}
            \caption{The number of permutations $\pi\in\S_n$ such that $\pi$ and $\theta(\pi)$ both avoid $\sigma$ for $\sigma\in\S_3.$}
            \label{tab:onepat} 
      \end{table}

%\todo{add examples!}
\newpage
\subsection{Avoiding 123}

We start with the case when $\sigma=123$, showing there are actually eventually zero permutations that avoid 123 whose image under $\theta$ also avoids 123.

\begin{theorem}\label{thm:123}
            For $n \geq 11$, $t_n(123) = 0$.
        \end{theorem}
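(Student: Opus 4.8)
The plan is to show that avoiding $123$ in both $\pi$ and $\hat\pi = \theta(\pi)$ is so restrictive that it becomes impossible once $n$ is large. The key structural fact is that a permutation avoids $123$ if and only if it can be written as the union of two decreasing subsequences; equivalently, its one-line notation decomposes into a bounded number of maximal decreasing runs in a very controlled way. So the first step I would take is to record the precise structure of $123$-avoiders: if $\pi$ avoids $123$, then by Dilworth/Erd\H{o}s--Szekeres its longest increasing subsequence has length at most $2$, which forces strong constraints on where large and small values can sit in the one-line notation.

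Second, I would translate the $123$-avoidance of $\hat\pi$ into a statement about the cycle structure of $\pi$. Recall that $\hat\pi$ is obtained by stripping parentheses from the standard cycle form, where each cycle is written largest-element-first and the cycles are ordered by increasing largest element. The requirement that this flattened word avoid $123$ means that reading left to right across the cycles (each of which starts with its maximum, hence each cycle individually contributes a descent from its leading entry) we cannot find three increasing values. In particular, because the cycle leaders (the largest elements) are themselves increasing across the standard form, having three or more nontrivial cycles—or even a few fixed points interleaved badly—tends to create a $123$ pattern in $\hat\pi$. I would make this precise: bound the number of cycles and the number of fixed points $\pi$ may have, since fixed points appear as singleton cycles $(m)$ that contribute an ascending element $m$ at the position of its leader and interact with the leaders of later cycles.

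Third, I would combine the two sets of constraints. The $123$-avoidance of $\pi$ (few increasing values in one-line form) and the $123$-avoidance of $\hat\pi$ (few increasing values across cycle leaders and within the flattened cycle word) pull in opposite directions: the former wants values to decrease in position, while the standard cycle form forces the cycle leaders to increase. My strategy is to argue that for $\pi$ to satisfy both, the number of cycles must be at most a small constant and each cycle must have bounded length, so that $n$ is bounded by a small constant. Concretely, I would enumerate the finitely many shapes (number of cycles, their sizes, and the relative order of their elements) compatible with both constraints, and check that every such shape has $n \le 10$; equivalently, that no valid configuration exists for $n \ge 11$.

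The main obstacle I expect is the bookkeeping in the third step: the simultaneous constraints on one-line form and on cycle leaders interact through the specific positions that each cycle occupies once its parentheses are removed, and a clean argument requires carefully tracking how a value's role as a cycle leader versus a non-leader affects both $\pi$ and $\hat\pi$. I anticipate that the cleanest route is a careful case analysis on the number of nontrivial cycles (two decreasing runs in $\hat\pi$ limit this severely) together with a separate accounting of fixed points, showing each case forces a small value of $n$. Verifying the exact threshold $n=11$ (rather than a slightly smaller or larger bound) will likely come down to exhibiting explicit extremal examples for $n \le 10$ and ruling out all configurations for $n = 11$, which is the delicate quantitative part of the argument.
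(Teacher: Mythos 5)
Your proposal correctly identifies the easy half of the argument: since the cycle leaders in standard cycle form increase left to right, any three cycles (trivial or not) already produce a $123$ in $\hat\pi$, so $\pi$ has at most two cycles; and the non-leading entries of the first cycle must decrease in $\hat\pi$, which (by reading off where those entries sit in the one-line form of $\pi$) bounds the first cycle's length by a small constant. But there is a genuine gap at the decisive step: you assert that ``each cycle must have bounded length, so that $n$ is bounded,'' and then propose to ``enumerate the finitely many shapes'' --- yet you give no mechanism for bounding the length of the cycle whose leader is $n$. That cycle's entries after $n$ may freely avoid $123$ as a word, so nothing in the constraints you have listed caps its length; the finiteness you plan to enumerate over is exactly what remains to be proved. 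The paper closes this gap with a double application of Erd\H{o}s--Szekeres: if the last cycle has length at least $10$, then among its last $9$ entries in $\hat\pi$ there is a decreasing subsequence $a_1>\cdots>a_5$ (since an increasing triple is forbidden); taking for each $a_j$ the element $i_j$ immediately preceding it in $\hat\pi$ (so $\pi_{i_j}=a_j$) and applying Erd\H{o}s--Szekeres again to the $i_j$'s yields a decreasing triple of positions carrying decreasing values, i.e.\ an increasing triple $a_{j_3}a_{j_2}a_{j_1}$ in the one-line notation of $\pi$, a contradiction. Some argument of this kind --- converting a long decreasing run in $\hat\pi$ into an increasing run in $\pi$ via the predecessor map --- is indispensable, and your outline does not contain it.

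A secondary issue: even with the cycle-length bounds in hand, the paper's combinatorial argument only reaches $n\ge 13$ (at most $3$ elements in the first cycle plus at most $9$ in the second), and the cases $n=11,12$ are settled by direct computation. Your plan to ``rule out all configurations for $n=11$'' by hand is likely to be substantially more delicate than you anticipate, and you should expect to either sharpen the structural bounds or fall back on a finite check for the remaining small cases.
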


        \begin{proof}
            First note that if $\pi$ has more than three cycles in its cycle decomposition, then $\theta(\pi)$ will contain a 123 pattern given by the largest entries of any three cycles of $\pi$.
            Therefore, we can restrict to considering permutations that have at most two cycles in their cycle decomposition.

            %In \cite[Theorem 6.2]{ABBGJ23}, the authors show that there are no cyclic permutations of length $n$ with $n \geq 9$ where $\pi$ avoids 123 and the cycle form that starts with a 1, i.e., $(1,\pi_1,\ldots)$ avoids 321. By complementing the cycle form (which corresponds to taking the reverse complement of the one-line form of the permutation), we see this implies that there are no cyclic permutations of length $n$ with $n \geq 9$ where $\pi$ avoids 123 and $\theta(\pi)$ avoids 123. Thus, it suffices to restrict to the case where the cycle decomposition of $\pi$ has two cycles.
            %%I think this previous paragraph is implied by the argument about the second cycle?

            In the case that $\pi$ has two cycles in its cycle decomposition, all the entries of the first cycle of the standard form of $\pi$ have to be decreasing, otherwise $\theta(\pi)$ will contain a $123$.
            %(is this necessary: given by the first entry of the first cycle, the second, larger, entry of the first cycle and the first entry of the second cycle)
            This implies that the first cycle has length at most 3 since a cycle with four or more elements $(i_1,i_2,i_3,i_4,\ldots)$ with $i_1 > i_2 > i_3 > i_4$ gives a subsequence $i_4i_3i_2$ in $\pi$, which is a 123 pattern.

            If the second cycle is length at least 10 (or if there is only one cycle of length at least 10), since we cannot have an increasing subsequence of length $3$, the Erd\"os-Szekeres theorem implies there must be a decreasing subsequence of length $5$ among the last $9$ elements, $a_1> a_2> a_3> a_4> a_5$. 
            Consider the elements $i_1,i_2,i_3,i_4,i_5$ so that $i_j$ appears directly before $a_j$ in $\theta(\pi)$. Among these $5$ terms, by the same argument, there is a decreasing subsequence of length at least $3$, say $i_{j_1},i_{j_2},i_{j_3}$. This implies there is an increasing subsequence in $\pi$ of length $3$, namely $a_{j_3}a_{j_2}a_{j_1}$.

            This shows that $t_n(123) = 0$ for $n \geq 13$. It is straightforward to verify computationally that $t_{11}(123) = t_{12}(123) = 0$, and so the result follows.
        \end{proof}

\subsection{Avoiding 132}

Next, we consider the case when $\sigma=132,$ and find the answer in terms of Fibonacci numbers.  
\begin{theorem}\label{thm:132}
            For $n \geq1$, $t_n(132) = \displaystyle\sum_{k=0}^{n+1} F_{n+1-k} F_k$, corresponding to the generating function
            \[
           T_{132}(x) = \frac{x}{(1-x-x^2)^2}.
            \]
        \end{theorem}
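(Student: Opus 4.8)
The plan is to reduce the problem to indecomposable permutations using the direct-sum structure of $132$-avoiders, and then to enumerate the indecomposable members of $\mathcal{T}(132)$, which is where the real difficulty lies.

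\emph{Direct-sum criterion and reduction.} The key preliminary observation is that $\sigma\oplus\tau$ avoids $132$ if and only if $\sigma$ and $\tau$ both avoid $132$ and, unless $\sigma$ is empty, $\tau$ is the increasing permutation. Indeed, any occurrence of $132$ in $\sigma\oplus\tau$ lies entirely in one summand, or else has its ``$1$'' in $\sigma$ and its ``$3$'' and ``$2$'' in $\tau$ (no other split is possible, since every value of $\tau$ exceeds every value of $\sigma$ and every position of $\sigma$ precedes those of $\tau$); the latter occurs exactly when $\sigma\neq\emptyset$ and $\tau$ contains $21$. Iterating this over the indecomposable decomposition shows that every $132$-avoider factors uniquely as $\rho\oplus 1\oplus\cdots\oplus 1$, where $\rho$ is its first indecomposable block and the remaining blocks are forced to be fixed points; equivalently, a $132$-avoider is indecomposable if and only if $n$ does not occupy the last position.

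\emph{Passing through $\theta$.} By Lemma~\ref{lem: thetaCommutes} together with $\theta(1)=1$, we have $\theta(\rho\oplus 1^{\oplus j}) = \hat\rho\oplus 1^{\oplus j}$, and $\hat\rho$ is indecomposable because $\theta$ preserves irreducibility. The criterion above then says $\hat\rho\oplus 1^{\oplus j}$ avoids $132$ if and only if $\hat\rho$ does. Hence $\pi\in\mathcal{T}_n(132)$ if and only if its first indecomposable block lies in $\mathcal{T}(132)$, the trailing fixed points being free. Writing $a_m$ for the number of indecomposable members of $\mathcal{T}_m(132)$ and $I(x)=\sum_{m\geq 1}a_m x^m$, this yields $t_n(132)=\sum_{m=1}^{n}a_m$, that is,
\[
T_{132}(x)=\frac{I(x)}{1-x}.
\]
It therefore suffices to prove $I(x)=x(1-x)/(1-x-x^2)^2$.

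\emph{The crux.} The heart of the argument is the enumeration of indecomposable $\pi$ with $\pi$ and $\hat\pi$ both avoiding $132$. I would stratify these by the position of $n$ in $\pi$, or equivalently analyze the cycle containing $n$, which is the final block of $\hat\pi$. The family with $n$ first, $\pi=n\beta$ for an arbitrary $132$-avoider $\beta$, is automatically indecomposable, and its standard cycle form places the cycle through $1$ and $n$ last; imposing that $\hat\pi$ avoid $132$ constrains $\beta$ in a way I would translate into a recursion on smaller indecomposable members of $\mathcal{T}$. Carrying this out, I expect the recurrence $a_m=a_{m-1}+a_{m-2}+F_{m-2}$, equivalently the four-term recurrence $t_n=2t_{n-1}+t_{n-2}-2t_{n-3}-t_{n-4}$ encoded by the factor $(1-x-x^2)^2$, in which the homogeneous part records appending a fixed point or a short decreasing piece, while the inhomogeneous Fibonacci term comes from a distinguished sub-family whose standard cycle word is nearly decreasing. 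The main obstacle is precisely this step: the two avoidance conditions live in different coordinate systems—one in one-line notation, the other in cycle notation—and $\theta$ mixes them, so a single recursive surgery must be shown to preserve both at once. If a clean bijection proves elusive, I would introduce a catalytic variable recording the position of $n$ (or the length of the last cycle) and solve the resulting functional equation by the kernel method.

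\emph{Conclusion.} Once $I(x)$, hence $T_{132}(x)=x/(1-x-x^2)^2$, is established, the closed form is immediate: since $\frac{x}{1-x-x^2}=\sum_{n\geq 0}F_n x^n$, we have $T_{132}(x)=\frac1x\bigl(\frac{x}{1-x-x^2}\bigr)^2$, so that $t_n(132)=[x^{n+1}]\bigl(\sum_k F_k x^k\bigr)^2=\sum_{k=0}^{n+1}F_{n+1-k}F_k$, as claimed.
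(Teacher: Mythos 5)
Your reduction to indecomposable permutations is sound: the direct-sum criterion for $132$-avoidance is correct, Lemma~\ref{lem: thetaCommutes} does let you peel off trailing fixed points, and the final manipulation from $T_{132}(x)=x/(1-x-x^2)^2$ to the convolution $\sum_{k}F_{n+1-k}F_k$ is fine. The problem is that everything you yourself label ``the crux'' is left as a plan rather than a proof: you say you ``expect'' the recurrence $a_m=a_{m-1}+a_{m-2}+F_{m-2}$ and list strategies you ``would'' try, but no argument is given that the indecomposable members of $\mathcal{T}_m(132)$ satisfy any recurrence at all. That is where the entire content of the theorem lives.

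Concretely, two steps are missing. First, you need a structural classification of the indecomposable members of $\mathcal{T}_n(132)$. The paper obtains one by locating $n$ in $\hat\pi$ (it can only occupy position $1$, $n-1$, or $n$), which shows the indecomposables are exactly the cyclic permutations together with those of the form $n\,\rho'\,1$ in which $(n,1)$ is a $2$-cycle and $\rho'$ is an arbitrary member of $\mathcal{T}_{n-2}(132)$ shifted up by one; this gives $a_n=F_n+t_{n-2}$, not a three-term recurrence in the $a_m$ alone. (Your heuristic reading of the homogeneous part as ``appending a fixed point'' cannot be the right combinatorial mechanism, since appending a fixed point to an indecomposable permutation makes it decomposable.) Second, the Fibonacci contribution requires proving that there are exactly $F_n$ cyclic permutations in $\mathcal{T}_n(132)$; the paper gets this by complementing the standard cycle form and invoking a theorem from \cite{ABBGJ23} on cyclic permutations avoiding a pattern in both one-line and cycle form. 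Your proposal only gestures at ``a distinguished sub-family whose standard cycle word is nearly decreasing.'' Until both of these are supplied, the proposal establishes only the (correct) outer frame of the argument, and the theorem remains unproved.
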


        Before proving this theorem, we first state a lemma that determines where $n$ must be in the image of a permutation $\pi\in\mathcal{T}_n(132)$ under $\theta,$ and follow this up with a lemma counting the number of cyclic permutations in $\mathcal{T}_n(132)$.

        \begin{lemma}\label{lem:132-n}
            If $\pi\in\mathcal{T}_n(132)$ and $\hat\pi = \theta(\pi)$, then if $\hat\pi_i=n$, we must have $i\in\{1,n-1,n\}.$
        \end{lemma}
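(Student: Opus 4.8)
The plan is to pin down exactly where the value $n$ can land in $\hat\pi$ by exploiting the standard cycle form, and then use the hypothesis that \emph{both} $\pi$ and $\hat\pi$ avoid $132$ to eliminate all positions other than $1$, $n-1$, and $n$. First I would observe that, since the cycle of $\pi$ containing $n$ is written last in standard cycle notation and begins with $n$, the value $n$ occupies position $i = n-\ell+1$ of $\hat\pi$, where $\ell$ is the length of that cycle. Thus proving $i\in\{1,n-1,n\}$ is the same as proving $\ell\in\{1,2,n\}$, and it suffices to derive a contradiction from $3\le \ell\le n-1$.

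Next I would extract structural information from the fact that $\hat\pi$ avoids $132$. Since $n$ is the largest entry and sits at position $i$, if any entry to the left of $n$ were smaller than any entry to its right, those two entries together with $n$ would form a $132$ pattern (a small value, then the peak $n$, then a middle value). Hence every entry left of $n$ must exceed every entry right of $n$. As the $n-\ell$ entries left of $n$ come from the earlier cycles and the $\ell-1$ entries right of $n$ are the remaining elements $c_2,\dots,c_\ell$ of the last cycle, this forces the left entries to be exactly $\{\ell,\dots,n-1\}$ and the right entries to be exactly $\{1,\dots,\ell-1\}$. Equivalently, the cycle of $\pi$ through $n$ acts on $\{1,\dots,\ell-1,n\}$, while the set $\{\ell,\dots,n-1\}$ is closed under $\pi$.

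Translating this back to the one-line notation of $\pi$, the values $\{\ell,\dots,n-1\}$ occupy exactly the positions $\{\ell,\dots,n-1\}$, the value $n$ sits at some position $\le \ell-1$ (being the image under $\pi$ of the last cycle entry), and the final entry $\pi_n$ is some value $\le \ell-1$. With $3\le\ell\le n-1$ these facts should force a $132$ pattern in $\pi$ itself, contradicting $\pi\in\mathcal{T}_n(132)$. When $n$ does \emph{not} occupy position $1$ of $\pi$, I can take the small value in position $1$, the value $n$, and the value $\ell$ (which lies in the block occupying positions $\ge\ell$, hence to the right of $n$) to obtain a $132$ directly.

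The main obstacle is the case $\pi_1=n$: then the value $n$ has nothing to its left, so the $132$ just described is unavailable, and I expect this to be the only genuinely delicate point. To handle it I would instead use the value $1$, which lies at a position $\le\ell-1$ in this case, together with the value $\ell$ from the block at a position $\ge\ell$, and the small value $\pi_n$ at the last position; since $\pi_n\ge 2$ here, the inequalities $1<\pi_n<\ell$ make these three entries a $132$ pattern. Either way $\pi$ contains $132$, giving the desired contradiction, so $\ell\in\{1,2,n\}$ and therefore $i\in\{1,n-1,n\}$.
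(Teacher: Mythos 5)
Your proof is correct and takes essentially the same approach as the paper's: both use the fact that $\hat\pi$ avoiding $132$ forces every entry to the left of $n$ to exceed every entry to its right, translate this into the cycle structure of $\pi$, and then split on whether $\pi_1=n$ to exhibit an explicit $132$ pattern in $\pi$. The only cosmetic difference is that you use the value $\ell$ where the paper uses $n-1$ as the largest entry of the pattern.
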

        \begin{proof}
            Suppose $\hat\pi_i=n$ with $2\leq i \leq n-2$. Then since $\hat\pi$ avoids 132, $\hat\pi_j>\hat\pi_k$ for all $j<i$ and $k>i$, and since $i\leq n-2$, $\hat\pi_{i+1}\neq \hat\pi_n$. This implies that $n-1$ appears to the left of $n$ in $\hat\pi$ and 1 appears to the right of $n$ in $\hat\pi$. In particular, if $\pi_1\neq n$, the standard cycle form of $\pi$ must look like 
            \[
            \cdots (n-1, \ldots, c)(n, \ldots  ,1,a, \ldots, b)
            \]
            with $a\leq b<c$. But then in $\pi$, we have $\pi_1\pi_b\pi_c = an(n-1)$, which is a 132 pattern. 

            On the other hand if $\pi_n=1$, the standard cycle form of $\pi$ must look like 
            \[
            \cdots (n-1, \ldots, c)(n,a,    \ldots, b,1)
            \] with $a\neq 1$ and so $\pi$ contains the subsequence $\pi_b\pi_c\pi_n = 1(n-1)a$ which is a 132 pattern. 
        \end{proof}

        \begin{lemma}\label{lem:132-cyc}
            The number of permutations  $\pi\in\mathcal{T}_n(132)$ so that $\theta(\pi)$ is cyclic is equal to $F_{n}$, the $n$-th Fibonacci number.
        \end{lemma}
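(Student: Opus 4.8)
The plan is to count permutations $\pi \in \mathcal{T}_n(132)$ for which $\hat\pi = \theta(\pi)$ is a single $n$-cycle. Since $\hat\pi$ is cyclic, the standard cycle form of $\pi$ is a single cycle $(n, a_2, a_3, \ldots, a_n)$ beginning with $n$, and $\hat\pi = n\, a_2 \cdots a_n$ is exactly this word with parentheses removed. The key observation is that $\hat\pi_1 = n$, so by Lemma~\ref{lem:132-n} this is consistent, and more importantly, since $\hat\pi$ avoids $132$ and starts with its largest value $n$, the remaining word $a_2 \cdots a_n$ is an arbitrary $132$-avoiding permutation of $\{1, \ldots, n-1\}$. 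Thus the constraint that $\hat\pi$ avoids $132$ is essentially free: there are Catalan-many such $\hat\pi$, and the real restriction comes from requiring that $\pi$ itself also avoids $132$.

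The approach I would take is to translate the condition directly into a condition on the cycle word. Given the single cycle $(n, a_2, \ldots, a_n)$, the permutation $\pi$ sends $n \mapsto a_2$, $a_2 \mapsto a_3$, and so on, with $a_n \mapsto n$. First I would set up a bijection between these cyclic $\hat\pi$ and some combinatorially transparent family by analyzing what $132$-avoidance of $\pi$ forces on the cyclic sequence. The cleanest route is likely to characterize when the one-line notation of $\pi$ (read off from the cycle structure) avoids $132$. I expect that the joint conditions—$\hat\pi$ avoiding $132$ as a word starting with $n$, together with $\pi$ avoiding $132$—collapse to a simple recursive description. Concretely, I would try to show that such $\pi$ are in bijection with binary strings or with tilings of a length-$(n-1)$ strip by monominoes and dominoes, which is the standard combinatorial model giving $F_n$.

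The most efficient proof strategy is probably a direct bijection or a recurrence. For the recurrence, I would condition on the position of $n-1$ or on the value $a_2 = \hat\pi_2$ (the image of $n$ under $\pi$), and argue that removing or splitting at a distinguished element yields either one permutation of the same type on $n-1$ letters or one on $n-2$ letters, producing the Fibonacci recurrence $f_n = f_{n-1} + f_{n-2}$. One must then verify the base cases $f_1 = F_1 = 1$ and $f_2 = F_2 = 1$ by hand. The simultaneous bookkeeping of two avoidance conditions—one on $\hat\pi$ and one on $\pi$, which live in different coordinate systems related by the nonlocal map $\theta$—is what makes this delicate.

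The hard part will be controlling how a $132$-pattern in $\pi$ manifests in terms of the cyclic word defining $\hat\pi$. A $132$-occurrence in $\pi$ involves three positions $i < j < k$ with $\pi_i < \pi_k < \pi_j$, and these positions correspond to values in the cyclic sequence via the successor relation $\pi(a_\ell) = a_{\ell+1}$; untangling this correspondence so that "$\pi$ avoids $132$" becomes a clean local condition on consecutive or comparably-ordered entries of the cycle word is the crux. I anticipate that the right intermediate structure is a description of the $a_\ell$ as an almost-decreasing or layered sequence, after which the Fibonacci count follows from a standard argument. I would structure the final proof as: (1) reduce to single cycles starting with $n$; (2) characterize the admissible cycle words via the two avoidance conditions; (3) set up the recurrence or bijection; (4) check base cases and conclude the count is $F_n$.
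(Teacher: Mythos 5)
Your proposal is a plan, not a proof: the central step is never carried out. You correctly identify that the problem reduces to counting single cycles $(n,a_2,\ldots,a_n)$ such that both the cycle word $na_2\cdots a_n$ and the one-line form of $\pi$ avoid $132$, and you correctly observe that the first condition alone is Catalan-free and that the difficulty is translating $132$-avoidance of $\pi$ into a condition on the cycle word. But everything after that is conditional language (``I would try to show,'' ``I expect,'' ``I anticipate'') — the characterization of admissible cycle words, the bijection with tilings, and the recurrence $f_n=f_{n-1}+f_{n-2}$ are all asserted as goals rather than established. Since that derivation is exactly the hard content of the lemma, the proposal has a genuine gap. There is also a logical slip at the outset: you write ``Since $\hat\pi$ is cyclic, the standard cycle form of $\pi$ is a single cycle,'' which is a non-sequitur — the cycle type of $\theta(\pi)$ says nothing directly about the cycle type of $\pi$ (e.g.\ $\pi=312=(3,2,1)$ is an $n$-cycle while $\theta(\pi)=321$ is not). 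The object you actually need, and the one the paper uses in the proof of Theorem~\ref{thm:132} (the case $\hat\pi_1=n$), is that $\pi$ itself is a single $n$-cycle; the lemma's phrasing notwithstanding, you should state and use that hypothesis directly rather than derive it from a property of $\hat\pi$.

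For comparison, the paper does none of this combinatorial work itself: it cites \cite[Theorem 5.1]{ABBGJ23}, which counts cyclic permutations avoiding $213$ whose cycle form written starting with $1$ avoids $312$, and transfers that count by complementing the cycle word (which induces reverse-complement on the one-line form), sending $213\mapsto 132$ in both coordinates. If you want a self-contained argument along the lines you sketch, you must actually prove the recurrence — for instance by showing that in an admissible cycle $(n,a_2,\ldots,a_n)$ the element $1$ is forced into one of two positions, each case reducing to an admissible cycle on $n-1$ or $n-2$ letters — and verify the base cases. As written, the proposal cannot be accepted as a proof.
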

        \begin{proof}
           Consider \cite[Theorem 5.1]{ABBGJ23}, which states that the number of cyclic permutations $\pi\in\S_n$ that avoid 213 and whose (non-standard) cycle form that begins with 1, written as $(1,\pi_1, \ldots)$, avoids 312 is equal to $F_n$. By taking the complement of the cycle form, we obtain a cycle in standard cycle form that avoids $132$. When complementing the cycle form of a cyclic permutation, we must reverse-complement the one-line form and so $\pi^{rc}$ avoids $213^{rc}=132.$ Thus the number of cyclic permutations $\pi$ that avoid 132 whose standard cycle form (and thus $\theta(\pi)$) avoids 132 is equal to $F_n.$
        \end{proof}

        We are now ready to prove the main theorem of this subsection.

        \begin{proof}[Proof of Theorem~\ref{thm:132}]
            By Lemma~\ref{lem:132-n}, we know that if $\hat\pi=\theta(\pi)$ and $\hat\pi_j=n$, then we must have that $j\in\{1,n-1,n\}$. Let us consider each case. 

            First, if $\hat\pi_1=n$, then $\pi$ is composed of only one cycle and so by Lemma~\ref{lem:132-cyc}, there are $F_n$ such permutations. Next, consider permutations with $\hat\pi_n=n$. In this case, $n$ is a fixed point of $\pi$, and so $\pi_n=n$ as well. It is clear that $n$ cannot be part of any 132 pattern in either $\pi$ or $\theta(\pi)$ and thus the number of such permutations is equal to $t_{n-1}(132).$

            Finally, consider the case when $\hat\pi_{n-1}=n$. Since $\hat\pi$ avoids 132, we must have that $\hat\pi_n=1$, and so $(n,1)$ is a 2-cycle in the cycle decomposition of $\pi$, and thus $\pi_n=1$ and $\pi_1=n.$ Notice that the $n$ and $1$ cannot be part of any 132 pattern in either $\pi$ or $\theta(\pi)$. Therefore the number of permutations with $\hat\pi_{n-1}=n$ is equal to $t_{n-2}(132).$

            This implies that $t_n(132) = t_{n-1}(132)+t_{n-2}(132) + F_n$. Together with the initial conditions that $t_1(132)=1$ and $t_2(132)=2$, it is a straightforward exercise to check that the generating function must satisfy \[T_{132}(x) = xT_{132}(x) + x^2T_{132}(x) + \frac{x}{1-x-x^2},\] and thus $t_{n}(132)$ is equal to the self-convolution of the Fibonacci numbers (OEIS A001629).
        \end{proof}

\subsection{Avoiding 213}
Now, let us consider the case when $\sigma=213$. This is one of the more complicated cases, requiring a few lemmas regarding the structure of such permutations before finding the number of permutations in terms of the Fibonacci numbers.

\begin{theorem}\label{thm:213}
            For $n \geq 2$, $t_n(213) = 2 F_{n+2} + n^2 - 6n +4$, where $F_n$ is the $n$-th Fibonacci number. Equivalently, $t_n(213)$  has the rational generating function
            \[
            T_{213}(x) =\frac{2x^5 + 9x^4 - 8x^3 -10x^2 + 13x - 4}{(x-1)^3(1-x-x^2)}.
            \]
            %Unsimplified = 2+ (n-2)^2 +\sum_{k=0}^{n-3} (2F_{n-k}-2)
        \end{theorem}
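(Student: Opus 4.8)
The plan is to reduce the computation of $t_n(213)$ to a count of \emph{irreducible} members of $\mathcal{T}(213)$, and then to analyze those irreducibles through their cycle structure. The reduction rests on a basic feature of $213$-avoiders: if $\pi$ avoids $213$ and $\pi=\alpha\oplus\beta$ with $\beta$ nonempty, then any inversion of $\alpha$ together with any entry of $\beta$ is a $213$, so $\alpha$ must be increasing. Hence every $213$-avoider factors uniquely as $\pi=(12\cdots k)\oplus\rho$ for some $k\ge 0$ and irreducible $\rho$. Since $\theta(12\cdots k)=12\cdots k$ and $\theta$ commutes with $\oplus$ by Lemma~\ref{lem: thetaCommutes}, we get $\hat\pi=(12\cdots k)\oplus\hat\rho$, and a leading increasing run introduces no $213$ in either coordinate (such a run contributes only small values in early positions, which can play neither the middle nor the interior role of a $213$). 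Thus $\pi\in\mathcal{T}_n(213)$ if and only if $\rho\in\mathcal{T}(213)$. Writing $u_m$ for the number of irreducible permutations in $\mathcal{T}_m(213)$, this gives $t_n(213)=\sum_{m=1}^n u_m$, so everything reduces to computing $u_m$.

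Next I would determine the cycle structure of an irreducible member. Writing $\hat\pi$ as the concatenation $C_1C_2\cdots C_r$ of standard cycles with increasing maxima $M_1<\cdots<M_r=n$, the key structural claim is that $\hat\pi$ avoiding $213$ forces every cycle except the last to be a fixed point: if an earlier cycle $C_i$ ($i<r$) has length at least two, its second entry $x$ satisfies $x<M_i<M_r=n$ and occurs before $n$ in $\hat\pi$, so $M_i,x,n$ is a $213$. Consequently an irreducible member of $\mathcal{T}(213)$ is a single nontrivial cycle (necessarily containing $n$) together with some fixed points, and irreducibility is exactly the condition that $1$ lies in that nontrivial cycle. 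This splits $u_m$ into the cyclic members and those with at least one fixed point.

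For the cyclic members I would argue as in Lemma~\ref{lem:132-cyc}, adapting the techniques of \cite{ABBGJ23}. Because the cycle word begins with $n$, the image $\hat\pi$ avoids $213$ precisely when the remainder of the cycle word does, so one counts $n$-cycles whose one-line form avoids $213$ and whose cycle word (after the leading $n$) avoids $213$. I expect these to fall into two symmetric families, according to whether $1$ immediately follows $n$ or immediately precedes $n$ in the cycle, each enumerated by $F_n-1$, for a total of $2F_n-2$. For the members with a fixed point, the condition that $\hat\pi$ avoids $213$ forces, for each fixed value $f$, every entry exceeding $f$ to precede every entry less than $f$ in the cycle word; combined with $\pi$ itself avoiding $213$, this pushes the fixed points to be a contiguous block of the largest non-$n$ values and leaves only a bounded number of admissible cycle shapes, which a short case analysis should count as $2m-5$ for $m\ge 3$.

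Combining the two counts gives $u_m=2F_m+2m-7$ for $m\ge 3$ (with $u_1=u_2=1$), and then $t_n(213)=\sum_{m\le n}u_m$ together with $\sum_{m\le n}F_m=F_{n+2}-1$ yields $2F_{n+2}+n^2-6n+4$; a routine generating-function computation then produces the stated rational form of $T_{213}(x)$. The main obstacles are the two enumerative cores: establishing the cyclic count $2F_n-2$ (in particular the factor $2$ and the correction $-2$, which is where the \cite{ABBGJ23}-style bijection must be set up carefully), and carrying out the fixed-point casework cleanly enough to land on exactly $2m-5$.
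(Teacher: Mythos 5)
Your proposal is correct and follows essentially the same route as the paper: the same structural facts drive both arguments (a member of $\mathcal{T}_n(213)$ is a single cycle on $\{n\}\cup[r,s]$ together with fixed points, the cycle word is forced to be monotone whenever some fixed point exceeds the cycle's interval, and the cyclic members number $2F_n-2$), and your reorganization via the factorization $\pi=(12\cdots k)\oplus\rho$ with $\rho$ irreducible is a cleaner bookkeeping of the paper's case split on whether $n-1$ is fixed, yielding the same totals ($2F_m-2$ cyclic irreducibles and $2m-5$ irreducibles with fixed points, summing to the stated formula). The one piece you leave genuinely open --- the cyclic count $2F_n-2$ --- is exactly the piece the paper also does not prove from scratch (it is Lemma~\ref{lem:213-cyc}, obtained by complementing \cite[Theorem 4.9]{ABBGJ23}), so you can simply cite that result rather than constructing your two-family bijection.
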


         \begin{lemma} \label{lem: 213-fixedIntervals}
           Given $\pi \in \mathcal{T}_n(213)$, we must have that $\pi$ is composed only of fixed points and a single cycle. Furthermore this cycle must be composed of elements in $\{n\}\cup[r, s]$ for some interval $[r,s]$.
            \end{lemma}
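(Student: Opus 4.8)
The plan is to establish the two assertions in turn: first that $\pi$ consists of fixed points together with a single nontrivial cycle (which must contain $n$), and then that the support of this cycle, after removing $n$, is an interval.

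For the first assertion I would work entirely with $\hat\pi=\theta(\pi)$. Write the standard cycle form of $\pi$ with cycle maxima $m_1<m_2<\cdots<m_k$; since $\theta$ merely erases the parentheses, these maxima occur in this increasing order in $\hat\pi$, each immediately followed by the remaining entries of its own cycle. Suppose some cycle $C_i$ with $i<k$ has length at least two, and let $x<m_i$ be one of its non-maximal entries. Then $\hat\pi$ contains the subsequence $m_i,\,x,\,m_k$, and since $x<m_i<m_k$ this is a $213$ pattern, contradicting $\pi\in\mathcal T_n(213)$. Hence every cycle except possibly the last is a singleton, and because the last cycle has maximum $m_k=n$, the only admissible nontrivial cycle is the one containing $n$. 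This produces exactly fixed points plus a single cycle $C$ on a set $S\ni n$.

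For the second assertion I would argue by contradiction, assuming $S\setminus\{n\}$ is not an interval; equivalently, writing $r=\min(S\setminus\{n\})$ and $s=\max(S\setminus\{n\})<n$, there is a fixed point $f$ with $r<f<s$, so that $a:=r$ and $c:=s$ satisfy $a<f<c$ with $a,c\in S\setminus\{n\}$. The first step is to pin down the value $n$: I would show $n$ must lie to the left of position $f$ in the one-line notation of $\pi$. Indeed, if $\pi^{-1}(n)>f$ then no value exceeding $f$ can sit to the left of position $f$ (otherwise that value, the fixed point $f$, and $n$ would form a $213$), which forces positions $1,\dots,f-1$ to carry exactly the values $1,\dots,f-1$; but then $\{1,\dots,f-1\}$ is $\pi$-invariant, so the cycle through $a<f$ stays below $f$ and cannot also contain $c$ and $n$, a contradiction. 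With $n$ to the left of $f$ established, I would extract the block structure forced on the two sides of position $f$: avoiding a $213$ with $f$ in the middle shows that, among the positions to the right of $f$, every value larger than $f$ precedes every value smaller than $f$. Finally I would play this one-line structure against the explicit form of $\hat\pi$, which by the first part equals the increasing list of fixed points, then $n$, then the cyclic readout $c_1\cdots c_\ell$ of $S\setminus\{n\}$; the same fixed point $f$ forces all entries of this tail that exceed $f$ to precede all entries below $f$, and in particular forces $s$ to precede $r$. Comparing this cyclic constraint with the one-line constraints and the rigidity of a single cycle should yield a forbidden $213$ in $\pi$ or in $\hat\pi$.

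The main obstacle is this last step: translating between the cyclic order in which $S\setminus\{n\}$ is read off into $\hat\pi$ and the one-line order of $\pi$, and showing that the several block constraints derived from the three roles a fixed point can play in a $213$ pattern cannot hold simultaneously once the cycle genuinely straddles $f$. I expect this bookkeeping to be the crux, and I would try to tame it using the inverse symmetry: since $\Av(213)$ is closed under taking inverses and inversion preserves both the fixed points and the cycle supports of $\pi$, any one-sided positional conclusion may be applied simultaneously to $\pi$ and $\pi^{-1}$ (for instance deducing that both neighbours of $n$ in the cycle are smaller than $f$), which should substantially cut down the case analysis.
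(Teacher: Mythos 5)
Your first assertion is proved correctly and by essentially the same argument as the paper: any non-maximal entry $x$ of a non-final cycle, together with that cycle's maximum $m_i$ before it and $n=m_k$ after it, gives the subsequence $m_i\,x\,m_k$ in $\hat\pi$, which is a $213$; hence every cycle other than the one containing $n$ is a singleton.

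The second assertion is where you have a genuine gap. You correctly isolate the key structural consequence: the fixed point $f$ appears before $n$ in $\hat\pi$, so avoidance of $213$ in $\hat\pi$ forces every element of the last cycle exceeding $f$ to precede every element below $f$ in the readout after $n$. But you then stop, saying that comparing this cyclic constraint with the one-line constraints ``should yield'' a forbidden pattern and that you expect this bookkeeping to be the crux. It is the crux, and your outline does not establish it. The missing step is in fact a single observation: write the last cycle as $(n,b_1,\dots,b_\ell,c_1,\dots,c_m)$ with every $b_i>f$ and every $c_j<f$ (both $\ell,m\geq 1$, since the cycle straddles $f$). Reading the one-line form of $\pi$ off this cycle gives $\pi_f=f$, $\pi_{b_\ell}=c_1$, and $\pi_n=b_1$; since $f<b_\ell<n$ as positions and $c_1<f<b_1$ as values, the subsequence $f\,c_1\,b_1$ is a $213$ in $\pi$ itself, the desired contradiction. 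This is exactly how the paper closes the argument. Note also that your preliminary reductions (locating $\pi^{-1}(n)$ relative to position $f$, and the appeal to inverse symmetry) become unnecessary once this pattern is exhibited, since the three positions $f$, $b_\ell$, $n$ are available regardless of where $n$ sits in the one-line notation of $\pi$.
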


            \begin{proof}
            Let $\hat\pi=\theta(\pi).$ Notice that if $\hat\pi_j=n$, then the elements $\hat\pi_1\ldots\hat\pi_{j-1}$ must appear in increasing order, implying that they correspond to fixed points of $\pi.$

            Next, let us show that the elements that appear after $n$ in $\hat\pi$ form an interval in $\Z$. For the sake of contradiction, suppose there is some $2\leq a\leq n-2$ with $\pi_a=a$ so that the cycle containing $n$ includes at least one element greater than $a$ and at least one element smaller than $a$. Since $\theta(\pi)$ avoids 213, we must have that the standard cycle form of $\pi$ looks like 
            \[
            \ldots (a)\ldots (n, b_1,\ldots, b_\ell,c_1,\ldots, c_m)
            \] where $b_j>a$ for all $1\leq j\leq \ell$ and $c_k<a$ for all $1\leq k\leq m$. However, in $\pi$ we then have the subsequence $\pi_a\pi_{b_\ell}\pi_n = ac_1b_1$ which is a 213 pattern. 
     \end{proof}

            We now show that there are $2F_{n-k}-2$ permutations in $\mathcal{T}_n(213)$ which fix the elements $\{1,2,\ldots, k\}$ for any $0\leq k\leq n-3$. Since these fixed points cannot contribute to a 213 pattern in either $\pi$ or $\theta(\pi)$, counting these permutations is equivalent to counting the number of cyclic permutations of length $n-k$ that avoid $213$ and whose image under $\theta$ avoids $213$.

            \begin{lemma}\label{lem:213-cyc}
                Let $n\geq 3$. The number of cyclic permutations $\pi \in \S_n$ with $\pi$ and $\hat{\pi} = \theta(\pi)$ avoiding $213$ is $2F_{n}-2$.
            \end{lemma}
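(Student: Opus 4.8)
The plan is to use the image $\hat\pi=\theta(\pi)$ to translate the two avoidance conditions into workable combinatorial statements and then set up a Fibonacci recursion. Since $\pi$ is a single $n$-cycle, its standard cycle form is $(n,a_1,\dots,a_{n-1})$ and so $\hat\pi = n\,a_1\cdots a_{n-1}$. Because $\hat\pi$ begins with the global maximum $n$, the entry $n$ can never play the role of the ``$3$'' in a $213$ pattern and hence lies in no such pattern at all; consequently $\hat\pi$ avoids $213$ if and only if the tail $a_1\cdots a_{n-1}$, viewed as a permutation of $[n-1]$, avoids $213$. Thus the lemma reduces to counting $213$-avoiding tails $a_1\cdots a_{n-1}$ for which the associated cyclic $\pi$ also avoids $213$.

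The first key step is a structural dichotomy: \emph{in any such $\pi$ the value $1$ occupies the first or last slot of the tail}, equivalently $\pi_n=1$ (so $n$ is immediately followed by $1$) or $\pi_1=n$ (so $1$ is immediately followed by $n$). I would prove this by contradiction. Suppose $1=a_t$ with $2\le t\le n-2$. Since $1$ is the minimum of $\hat\pi$ and $\hat\pi$ avoids $213$, every entry to the left of $1$ in $\hat\pi$ must exceed every entry to its right; hence $\{n,a_1,\dots,a_{t-1}\}$ are the top $t$ values and $\{a_{t+1},\dots,a_{n-1}\}$ the remaining small values, so in particular $a_1$ is large and $a_{t+1}$ is small with $a_{t+1}<a_1$. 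Reading off positions of $\pi$ from the cycle gives $\pi_1=a_{t+1}$, $\pi_{a_{t-1}}=1$ (as $a_{t-1}\to 1$), and $\pi_n=a_1$ (as $n\to a_1$); since $1<a_{t-1}<n$ and $1<a_{t+1}<a_1$, the triple $(\pi_1,\pi_{a_{t-1}},\pi_n)=(a_{t+1},1,a_1)$ is a $213$ pattern in $\pi$, a contradiction. This exhibits the obstruction cleanly and pins down the two admissible positions of $1$.

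With the dichotomy in hand, the plan is to establish the recurrence $c_n = c_{n-1}+c_{n-2}+2$, where $c_n$ denotes the quantity to be computed. The idea is that deleting the forced extreme element (the leading $1$ of the cycle when $\pi_n=1$, or the trailing $1$ when $\pi_1=n$) and restandardizing sends most admissible cycles to admissible cycles of smaller length, producing the $c_{n-1}$ and $c_{n-2}$ contributions, while a bounded family of ``extremal'' cycles—those coming from $(n,1,2,\dots,n-1)$ and $(n,n-1,\dots,1)$—must be counted separately and account for the additive constant $2$. Once the recurrence is verified, I would record the base cases $c_3=2$ and $c_4=4$ (checked directly) and solve it: setting $e_n=c_n+2$ turns the recurrence into the Fibonacci relation $e_n=e_{n-1}+e_{n-2}$ with $e_3=4=2F_3$ and $e_4=6=2F_4$, whence $e_n=2F_n$ and $c_n=2F_n-2$.

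The main obstacle will be transferring the condition ``$\pi$ avoids $213$'' across the reduction. The difficulty is that passing to a standard cycle form scrambles one-line order: deleting an entry from $\pi$ in one-line notation preserves $213$-avoidance but destroys cyclicity, whereas deleting an element from the cycle preserves cyclicity but reshuffles the one-line word when the cycle is rewritten with its largest element first. I therefore expect the heart of the argument to be a careful, cycle-level analysis showing that a $213$ pattern in the reduced permutation lifts to one in $\pi$ and conversely, together with the explicit identification of the few exceptional cycles where this correspondence fails; the avoidance of $\hat\pi$ (condition on the tail) transfers easily, as noted in the first paragraph, so all the real work lies in controlling $\pi$ itself.
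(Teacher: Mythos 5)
Your opening reduction (the entry $n$ at the front of $\hat\pi$ lies in no $213$ pattern, so the condition on $\hat\pi$ is a condition on the tail) and your dichotomy argument pinning $1$ to the first or last slot of the tail are both correct and cleanly argued. Note that the paper does none of this: its proof is a two-line reduction to a cited theorem of Archer--Borsh--Bridges--Graves--Jeske via complementation of the cycle form, so a self-contained argument of the kind you are attempting would be a genuinely different (and arguably more informative) route. The final Fibonacci algebra is also fine: \emph{if} the recurrence $c_n=c_{n-1}+c_{n-2}+2$ were established, the formula $2F_n-2$ would follow.

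The gap is in the heart of the argument, and it is not merely an omission: the reduction you propose fails. Deleting the element $1$ from the cycle and restandardizing does \emph{not} preserve the condition that $\pi$ avoids $213$, in either direction, and the failures are not confined to a bounded family of extremal cycles. Concretely, at $n=6$ take the admissible cycle $(5,3,4,2,1)$, i.e.\ $\pi'=51423$ with $\hat\pi'=53421$ (both avoid $213$). Its lift in your first case is $(6,1,4,5,3,2)$, whose one-line form is $\pi=462531$, which contains the $213$ pattern $\pi_1\pi_3\pi_4=4\,2\,5$ even though $\hat\pi=614532$ still avoids $213$. Conversely, $(5,4,2,3,1)$ is inadmissible ($\pi'=53124$ contains $3\,1\,4$), yet its lift $(6,1,5,3,4,2)$ is admissible ($\pi=564231$ and $\hat\pi=615342$ both avoid $213$). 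The same phenomenon occurs in your second case (e.g.\ $(5,1,2,4,3)$ is admissible but $(6,2,3,5,4,1)$ is not, while $(6,2,5,3,4,1)$ is admissible although $(5,1,4,2,3)$ is not). In fact each of your two cases contains exactly $F_n-1$ admissible cycles, so the discrepancy between a case count and $c_{n-1}=2F_{n-1}-2$ is $F_{n-3}-1$, which grows without bound; no finite list of exceptions can repair the map. A correct direct proof would need a different recursive structure inside each case (each case separately satisfies $g_n=g_{n-1}+g_{n-2}+1$, which suggests the right induction is on the two cases individually, via a map other than deletion of the element $1$), or else the symmetry-plus-citation route the paper actually takes.
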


            \begin{proof}
                This lemma follows from \cite[Theorem 4.9]{ABBGJ23}. In that paper, the authors find that the number of cyclic permutations that avoid 132 and whose (non-standard) cycle form beginning with 1 avoids 231 is given by $2F_n-2.$ By complementing that cycle form, we obtain the standard cycle form that avoids $231^c=213.$ Since this corresponds to the reverse complement of $\pi,$ the one-line form of the permutation will avoid $132^{rc} = 213$.
            \end{proof}

            \begin{proof}[Proof of Theorem~\ref{thm:213}]
                By Lemma~\ref{lem: 213-fixedIntervals}, we know that $\pi\in\mathcal{T}_n(213)$ must be composed of only fixed points and a single cycle containing $n$ and elements from an interval $[r,s]$. First consider the case when $s\neq n-1$; in this case, $n-1$ must be a fixed point. However, we then must have the elements in $[r,s]$ appear in increasing or decreasing order in $\theta(\pi)$. If not, then in the cycle $(n, a_1,a_2,\ldots,a_n)$, there must be some $i$ so that $a_{i-1}<a_i>a_{i+1}$ or $a_{i-1}>a_i<a_{i+1}$.  In the first case, $\pi_{a_{i-1}}\pi_{a_i}\pi_{n-1} = a_ia_{i+1}(n-1)$ is a 213 pattern in $\pi$ and in the second case $\pi_{a_{i}}\pi_{a_{i-1}}\pi_{n-1} = a_{i+1}a_{i}(n-1)$ is a 213 pattern in $\pi$. Thus, there are exactly two possible permutations for each possible interval $[r,s]$ with $r<s<n-1,$ resulting in $2\binom{n-2}{2}$ permutations. If $r=s$, there are $n-2$ permutations, and if the interval is empty, then there is a single permutation (the identity permutation). 

                If $\pi_{n-1}\neq n-1$, then all fixed points must be in an interval in $\Z$ containing 1, and thus $\theta(\pi)$ is the direct sum of the identity permutation and a cyclic permutation in $\mathcal{T}_n(213)$. If there are $n-2$ fixed points, then we have the single permutation $\pi = 123\ldots n(n-1)$. By Lemma~\ref{lem:213-cyc}, the number of such permutations with $k$ fixed points  with $0\leq k \leq n-3$ is given by $2F_{n-k} - 2$.

                 In total we have
            \[
             2\binom{n-2}{2}+(n-2)+1+1 + \sum_{k=0}^{n-3} (2F_{n-k} - 2).
            \]
            Simplifying this expression gives us the statement of the theorem.
            \end{proof}

            % The smallest element in the long cycle must either appear directly after $n$ or in the last position. If not, this means there is an element to the left and right of the smallest element in the cycle: $(n \alpha_2 \cdots \alpha_{s-1} \alpha_s \alpha_{s+1} \cdots)$ where $\alpha_s$ is the smallest element in the cycle. Since $\hat\pi$ avoids $213$ we must have $\alpha_2 > \alpha_{s+1}$. In our one-line notation we then have the sub-permutation $\alpha_{s+1} \alpha_s \alpha_2$ which is order isomorphic to $213$.

\subsection{Avoiding 231}

The case when $\sigma=231$ is one of the easier cases, as shown below. 

\begin{theorem}\label{thm:231}
            For $n \geq1$, $t_n(231) = 2^{n-1}$.
        \end{theorem}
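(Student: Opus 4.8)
The plan is to characterize exactly which permutations $\pi$ lie in $\mathcal{T}_n(231)$ by understanding how the $231$-avoiding structure constrains both $\pi$ and its standard cycle form. Recall that a permutation avoids $231$ in one-line notation precisely when it has a recursive ``stack-sortable'' structure: if $\pi_i = n$, then everything to the left of position $i$ is smaller than everything to the right, i.e. $\pi = (\alpha)\, n\, (\beta)$ where $\alpha$ is a $231$-avoiding permutation of a prefix interval $\{1,\dots,j\}$ and $\beta$ is a $231$-avoiding permutation of $\{j+1,\dots,n-1\}$ — but in fact for $231$-avoidance the values left of $n$ must all exceed the values right of $n$ is the wrong direction, so I would first pin down the correct decomposition carefully. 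I expect the cleanest route is to directly argue that $\pi \in \mathcal{T}_n(231)$ forces a very rigid form and then count.

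First I would determine where $n$ can sit in the one-line notation of $\pi$. Since $\pi$ avoids $231$, writing $\pi_k = n$, every value before position $k$ must be less than every value after it (otherwise a larger-then-smaller pair before $n$, together with $n$, yields $231$). This splits $\pi$ as $\alpha \oplus (1 \oplus \beta)$-type data, and I would use Lemma~\ref{lem: thetaCommutes} (that $\theta$ commutes with direct sums) to reduce the analysis of $\theta(\pi)$ to its irreducible blocks. The key observation to establish is that within each irreducible block the joint condition ``$\pi$ avoids $231$ and $\theta(\pi)$ avoids $231$'' leaves essentially no freedom beyond a single binary choice, which is what should produce the factor of $2$ per step and hence the count $2^{n-1}$.

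The main structural step is therefore to prove that every irreducible $\pi \in \mathcal{T}_n(231)$ is forced into one of exactly two shapes, and that these interact with the direct-sum decomposition so that an arbitrary element of $\mathcal{T}_n(231)$ is built from a sequence of independent binary choices, one fewer than $n$. Concretely, I would set up a bijection between $\mathcal{T}_n(231)$ and binary strings of length $n-1$: reading the permutation from the position of $n$ outward (or equivalently tracking, at each stage, whether the next largest available element is placed so as to start a new direct summand or extend the current cyclic block), each elementary step admits precisely two choices consistent with $231$-avoidance in both $\pi$ and $\hat\pi$. Summing (or rather multiplying) these choices gives $2^{n-1}$, and I would verify the base cases $t_1(231)=1$ and $t_2(231)=2$ to anchor the recursion.

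The hard part will be controlling the cycle structure simultaneously with the one-line structure: unlike the $132$ and $213$ cases, where earlier lemmas already isolated the cyclic part via the companion results of \cite{ABBGJ23}, here I must verify that the $231$-avoidance of $\hat\pi = \theta(\pi)$ does not secretly couple otherwise-independent blocks through the standard cycle ordering (cycles listed by largest element). The delicate point is that direct-sum decomposability of $\pi$ translates cleanly through $\theta$ by Lemma~\ref{lem: thetaCommutes}, so I would lean heavily on that to decouple the blocks, and then the whole argument should collapse to: (i) $231$-avoiding permutations decompose uniquely into irreducible direct summands, (ii) each irreducible summand in $\mathcal{T}$ has exactly two admissible forms, and (iii) a product/convolution argument (or a direct recurrence $t_n = 2\,t_{n-1}$) yields $2^{n-1}$. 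I anticipate that establishing (ii) — the claim that an irreducible joint-$231$-avoider admits exactly two shapes — is where the real case analysis lives, likely mirroring the ``decreasing or increasing block'' dichotomy seen in the $213$ proof above.
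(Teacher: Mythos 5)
Your overall skeleton --- locate $n$, split off a direct summand, use Lemma~\ref{lem: thetaCommutes} to decouple blocks, and count --- matches the paper's strategy, but the key structural claim you plan to prove is wrong, and it is exactly the step you yourself flag as ``where the real case analysis lives.'' You assert that each irreducible summand in $\mathcal{T}_d(231)$ admits \emph{exactly two} admissible forms. If that were true, a permutation built from a composition $(d_1,\dots,d_m)$ of $n$ would contribute $2^m$ permutations, and summing over compositions gives $2\cdot 3^{n-1}$, not $2^{n-1}$. The correct statement is that for each $d$ there is \emph{exactly one} irreducible (indeed cyclic) element of $\mathcal{T}_d(231)$, namely $d\,1\,2\cdots(d-1)=(d,d-1,\dots,1)$, and the factor $2^{n-1}$ is purely the number of compositions of $n$ --- which is the binary-string bijection you describe (each of the $n-1$ gaps either starts a new block or continues the current one). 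The ``decreasing or increasing block'' dichotomy you anticipate from the $213$ case does not occur here; there is no binary choice inside a block. You do not supply the argument for this uniqueness, and your prediction of what it should say is off, so the proposal as written does not close.

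For the record, the missing uniqueness argument is short. A cyclic $\pi\in\mathcal{T}_d(231)$ must have $\pi_1=d$: otherwise the entries preceding $d$ in one-line notation form an interval of the smallest values (by $231$-avoidance), hence a union of cycles disjoint from the one containing $d$, contradicting cyclicity. Then $\pi(1)=d$ forces $1$ to be the last entry of the standard cycle form, so $\theta(\pi)$ ends in $1$; a $231$-avoiding permutation ending in $1$ must be decreasing, so $\theta(\pi)=d(d-1)\cdots 1$ and $\pi=d\,1\,2\cdots(d-1)$. Note also that the paper splits at the position of $n$ in $\hat\pi=\theta(\pi)$ rather than in $\pi$: a block of $\hat\pi$ beginning with its largest element corresponds to a single cycle of $\pi$, which is what makes the reduction to the cyclic case immediate; splitting at $n$ in $\pi$, as you propose, only tells you the block starts with its maximum, not that it is a single cycle, so you would need an extra step there as well.
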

            \begin{proof}
            First, let us show that there is only one cyclic permutation in $\mathcal{T}_n(231),$ namely \[\pi = n12\ldots(n-1) =(n,n-1,\ldots,1).\]
            Indeed, we must have that any cyclic permutation $\pi$ avoiding 231 must have $\pi_1=n$ since otherwise, we would have all elements appearing before $n$ forming a consecutive interval of the smallest elements of $\pi$, and thus cannot be part of the cycle containing $n$. However, then we have $\theta(\pi)$ ending in 1. Since $\theta(\pi)$ also avoids 231, $\theta(\pi)$ must be the decreasing permutation. 

            Now, let us consider any permutation $\pi\in\mathcal{T}_n(231)$ and let $\hat\pi=\theta(\pi)$. If $\hat\pi_i=n,$ then since $\hat\pi$ avoids 231, we have $\hat\pi_j<\hat\pi_k$ for all $j<i$ and $k>i$. Thus $\pi$ is a direct sum of a permutation $\pi \in \mathcal{T}_{i-1}(213)$ and the cyclic permutation of length $n-i+1.$ In particular, $\pi$ is block cyclic, where each cycle is of the form described above. It is thus enough to determine the sizes of these cycles, and thus $\mathcal{T}_n(231)$ is in bijection with compositions of $n$, implying there are $t_n(231)=2^{n-1}$.
        \end{proof}

\subsection{Avoiding 312}

The case when $\sigma=312$ is also quite straightforward, as we show all such permutations must be involutions. 
        \begin{theorem}\label{thm:312}
            For $n\geq 1$, $t_n(312) = 2^{n-1}$.
        \end{theorem}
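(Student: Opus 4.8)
The plan is to prove that $t_n(312) = 2^{n-1}$ by showing, as the theorem's preamble hints, that every permutation in $\mathcal{T}_n(312)$ is an involution, and then counting $312$-avoiding involutions directly. First I would analyze the structure forced on $\pi$ by requiring that $\hat\pi = \theta(\pi)$ avoid $312$. The key observation is to locate $n$ in $\hat\pi$: if $\hat\pi_i = n$, then since $\hat\pi$ avoids $312$, every entry to the left of position $i$ must be smaller than every entry to its right (otherwise a large-entry-to-the-left followed by $n$'s successors would yield a $312$). In the standard cycle notation, $n$ is necessarily the first entry of the last (largest) cycle, so the entries to the left of $n$ in $\hat\pi$ comprise exactly the cycles on the smaller elements. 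This should force $\pi$ to decompose as a direct sum, and by Lemma~\ref{lem: thetaCommutes}, $\theta$ respects this decomposition, so I can reduce to understanding the single cycle containing $n$.

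Next I would pin down what that cycle containing $n$ must look like. Writing the top cycle as $(n, a_1, a_2, \ldots, a_m)$, the constraint that $\hat\pi$ avoids $312$ restricts the possible orderings of $a_1, \ldots, a_m$, while the constraint that $\pi$ itself avoids $312$ in one-line notation restricts them further. I expect these two constraints together to force the cycle to be a transposition together with (possibly) the identity on the rest, i.e. the only cyclic building blocks that survive are the fixed point and the $2$-cycle $(n, n-1)$ on consecutive top elements. Concretely, I anticipate showing that the cycle containing $n$ must be either a singleton $(n)$ or the $2$-cycle formed with the next-largest available element, exactly as in the $132$ analysis via Lemma~\ref{lem:132-n}. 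Once every irreducible block is either a fixed point or a transposition of adjacent values, $\pi$ is visibly an involution, and each $\pi \in \mathcal{T}_n(312)$ is a direct sum of $1$'s and $21$'s.

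Having established that these permutations are precisely the direct sums of $1$ and $21$, the enumeration becomes a routine composition count: such a direct sum is determined by a tiling of a length-$n$ strip by tiles of size $1$ and $2$, but since here the blocks are $1$ and $21$ and every arrangement is automatically both $312$-avoiding and has $312$-avoiding image, I would instead count them as subsets recording which adjacent pairs are swapped. This gives $2^{n-1}$ directly, or alternatively one checks the count of $312$-avoiding involutions matches $2^{n-1}$. I would verify the two final constraints carefully, namely that a direct sum of $1$'s and $21$'s genuinely avoids $312$ in one-line notation and that its image under $\theta$ (which by Lemma~\ref{lem: thetaCommutes} is the same kind of direct sum) also avoids $312$.

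The main obstacle I expect is the middle step: rigorously ruling out longer cycles and more complicated top cycles. It is easy to guess that only fixed points and adjacent transpositions occur, but carefully showing that any cycle of length $\geq 3$, or a transposition between non-adjacent values, creates a forbidden $312$ pattern in either $\pi$ or $\hat\pi$ will require a careful case analysis of where the cycle entries land in both notations. I would organize this by extracting an explicit $312$ occurrence from the positions of the cycle entries, mirroring the extraction used in the proofs of Lemma~\ref{lem:132-n} and Lemma~\ref{lem: 213-fixedIntervals}, and I would lean on the complement/reverse symmetries recorded in the preliminaries to reduce the number of cases where possible.
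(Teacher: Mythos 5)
Your first step --- showing that no cycle of length $\geq 3$ can occur, so that every $\pi\in\mathcal{T}_n(312)$ is an involution --- is exactly the paper's opening move and is fine. But the second step of your plan contains a genuine error: you claim that the only surviving building blocks are fixed points and the adjacent transposition $(n,n-1)$, so that every $\pi\in\mathcal{T}_n(312)$ is a direct sum of copies of $1$ and $21$. That is false, and it is incompatible with the count you are trying to prove: direct sums of $1$'s and $21$'s are counted by the Fibonacci numbers $F_{n+1}$, not by $2^{n-1}$ (and your alternative count ``subsets recording which adjacent pairs are swapped'' is also not $F_{n+1}$ or $2^{n-1}$, since the chosen pairs must be disjoint). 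A concrete counterexample to your structural claim is $\pi=54321=(3)(4,2)(5,1)$: it is an irreducible $312$-avoiding involution whose transpositions are \emph{not} between adjacent values, and $\theta(54321)=34251$ avoids $312$, so $54321\in\mathcal{T}_5(312)$.

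The correct classification, which the paper proves, is that $\mathcal{T}_n(312)$ consists of \emph{all} $312$-avoiding involutions; these are exactly the direct sums $\bigoplus_j \delta_{d_j}$ of decreasing permutations $\delta_k$ of \emph{arbitrary} lengths $k$, indexed by compositions $(d_1,\dots,d_m)$ of $n$, whence the count $2^{n-1}$. The remaining work is then the opposite of what you anticipated: rather than ruling blocks out, one must verify that every such block is admissible, i.e.\ that $\theta(\delta_k)$ avoids $312$ for all $k$. The paper does this by writing out the standard cycle form of $\delta_k$ explicitly, namely $(\tfrac{k}{2}+1,\tfrac{k}{2})(\tfrac{k}{2}+2,\tfrac{k}{2}-1)\cdots(k,1)$ for $k$ even (and the analogous form for $k$ odd), and checking the resulting one-line word. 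You would need to replace your adjacency argument with this verification to complete the proof.
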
  

        \begin{proof}
            Let us first see that a permutation $\pi\in\mathcal{T}_n(312)$ cannot have any cycles of length greater than 2.  For the sake of contradiction, suppose that a permutation $\pi\in\mathcal{T}_n(312)$ does have a cycle in its cycle decomposition of length 3 or more. Then in its standard cycle form, it must be of the form $(m, a_1, a_2, \ldots, a_k)$ where $k\geq 2$ and $m>a_i$ for each $i$. Since $\theta(\pi)$ avoids 312, $a_1>a_2>\ldots>a_k$. But then, in $\pi$, we have the subsequence $\pi_{a_k}\pi_{a_1}\pi_{m}=ma_2a_1$ which is a 312 pattern.

            Therefore all cycles must be length 1 or 2, i.e., $\pi$ must be an involution. We will now show that in fact, the permutations in $\mathcal{T}_n(312)$ are exactly the 312-avoiding involutions.

            It is clear that 312-avoiding involutions are of the form $\bigoplus_{j=1}^m \delta_{d_j}$ where $(d_1,\ldots,d_m)$ is a composition of $n$ and $\delta_k$ is the decreasing permutation of length $k$. It follows that $\theta(\pi) = \bigoplus_{j=1}^m \theta(\delta_{d_j})$. It is sufficient to show that $\theta(\delta_k)$ avoids 312 for all $m.$
            Note that the standard cycle form of $\delta_k$ is 
            \[\left(\frac{k}{2}+1,\frac{k}{2}\right)\left(\frac{k}{2}+2,\frac{k}{2}-1\right) \cdots (k-1,2)(k,1) 
            \]
            if $k$ is even, and
            \[
            \left(\frac{k+1}{2}\right) \left(\frac{k+1}{2}+1,\frac{k+1}{2}-1\right) \cdots (k-1,2)(k,1)
            \]
            if $k$ is odd. Removing parentheses to get $\theta(\delta_k)$, the resulting permutation does indeed avoid 312. Since there are exactly $2^{n-1}$ involutions that avoid 312, the theorem follows.
        \end{proof}

\subsection{Avoiding 321}

In this section, we consider the most complicated case, enumerating 321-avoiding permutations whose image under the fundamental bijection also avoids 321. We give the answer in terms of its generating function. 

             \begin{theorem}\label{thm: 321} Suppose $T_{321}(x) = \sum_{n\geq 1} t_n(321)x^n$. Then 
            %$t_n(321)$ has the generating function 
            \[
            T_{321}(x) = \frac{2 x^2}{ 2 x(1 - x) -1 + \sqrt{1 - 4 x^2}}.
            \]
        \end{theorem}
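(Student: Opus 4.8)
The plan is to reduce the problem to counting \emph{indecomposable} members of $\mathcal{T}(321)$ and then to reassemble via the direct-sum structure. The starting point is that the pattern $321$ is itself indecomposable, so any occurrence of $321$ in a direct sum $\alpha\oplus\beta$ must lie entirely in $\alpha$ or entirely in $\beta$; hence $\alpha\oplus\beta$ avoids $321$ if and only if both $\alpha$ and $\beta$ do. Combining this with Lemma~\ref{lem: thetaCommutes} (so that $\theta(\bigoplus_i\pi_i)=\bigoplus_i\theta(\pi_i)$) and the proposition that $\theta$ preserves irreducibility, I would show that a permutation $\pi$ with indecomposable summands $\pi_1,\dots,\pi_m$ lies in $\mathcal{T}_n(321)$ if and only if every $\pi_i$ lies in $\mathcal{T}(321)$: indeed $\pi$ avoids $321$ iff each $\pi_i$ does, while $\hat\pi=\bigoplus_i\hat{\pi_i}$ avoids $321$ iff each $\hat{\pi_i}$ does, and each $\hat{\pi_i}$ is exactly the (indecomposable) summand of $\hat\pi$ attached to $\pi_i$. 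Writing $I(x)$ for the generating function counting indecomposable permutations in $\mathcal{T}(321)$, unique decomposition into indecomposable summands gives $1+T_{321}(x)=\tfrac{1}{1-I(x)}$, that is,
\[
T_{321}(x)=\frac{I(x)}{1-I(x)}.
\]

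Everything then reduces to determining $I(x)$, and here I would analyze the structure of an indecomposable $\pi\in\mathcal{T}_n(321)$ through the position of $n$ in $\hat\pi=\theta(\pi)$. Since $n$ begins the last cycle $(n,a_1,\dots,a_\ell)$ in standard form and is the global maximum, requiring $\hat\pi$ to avoid $321$ forces the tail $a_1<a_2<\cdots<a_\ell$ to be increasing and forces any inversion among the entries of $\hat\pi$ preceding $n$ to have its smaller entry below $a_1$; simultaneously, reading $\pi(a_i)=a_{i+1}$, $\pi(a_\ell)=n$, $\pi(n)=a_1$ in one-line notation imposes that $\pi$ itself avoid $321$. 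I expect these two-sided constraints, after peeling off the last cycle together with the entries it interacts with, to yield a recursive description of indecomposables that translates into the algebraic relation
\[
(1-2x)\,I(x)\bigl(I(x)-x\bigr)=x^{3}.
\]
Equivalently, indecomposables of size $n$ should be counted by the central binomial coefficient $\binom{n-2}{\lfloor (n-2)/2\rfloor}$ for $n\ge 2$ (and by $1$ for $n=1$), a count I would confirm from the structural recursion and against small cases — for instance the two indecomposables $2341=(4,1,2,3)$ and $3412=(3,1)(4,2)$ in $\mathcal{T}_4(321)$, and the resulting values $t_1,t_2,t_3,t_4=1,2,4,9$ (the value $t_3=4$ being checkable directly, since among the five $321$-avoiders in $\S_3$ only $312$ has image $\hat\pi=321$).

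Granting the relation above, the conclusion is routine: the quadratic $(1-2x)I^2-(1-2x)xI-x^3=0$ has discriminant $(1-2x)x^2(1+2x)=x^2(1-4x^2)$, so the branch with $I(x)=x+\cdots$ is
\[
I(x)=\frac{x\bigl(1-2x+\sqrt{1-4x^{2}}\bigr)}{2(1-2x)}=\frac{2x^{2}}{2x-1+\sqrt{1-4x^{2}}}.
\]
Substituting into $T_{321}=I/(1-I)$ and simplifying the denominator $2x-1+\sqrt{1-4x^{2}}-2x^{2}$ produces exactly
\[
T_{321}(x)=\frac{2x^{2}}{2x(1-x)-1+\sqrt{1-4x^{2}}},
\]
as claimed.

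The main obstacle is the middle step: pinning down the structure of indecomposable members of $\mathcal{T}(321)$ and extracting the functional equation for $I(x)$. The difficulty is that $321$-avoidance must be controlled \emph{simultaneously} in the one-line form of $\pi$ and in $\hat\pi$, and the last cycle $(n,a_1,\dots,a_\ell)$ couples these two conditions in a way that resists a clean recursion; obtaining a self-contained decomposition rather than a case analysis that proliferates is where I expect most of the work to lie. I would also verify the base cases and the precise indexing in the assembly formula carefully, since whether or not the empty permutation is counted is exactly what distinguishes $\tfrac{1}{1-I}$ from $\tfrac{I}{1-I}$.
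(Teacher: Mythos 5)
Your framework is exactly the paper's: reduce to irreducible (indecomposable) members of $\mathcal{T}_n(321)$ via Lemma~\ref{lem: thetaCommutes} and the fact that $\theta$ preserves irreducibility, write $T_{321}=A/(1-A)$ where $A$ counts irreducibles, and your guessed count $\binom{n-2}{\lfloor(n-2)/2\rfloor}$ for irreducibles of size $n\geq 2$ is precisely Theorem~\ref{thm: 321-Irred}. Your algebra at the end is also correct: the quadratic $(1-2x)I^2-(1-2x)xI-x^3=0$ does have the stated discriminant $x^2(1-4x^2)$, its relevant branch does expand as $x+x^2+x^3+2x^4+3x^5+6x^6+\cdots$ matching the central binomial coefficients, and substituting into $I/(1-I)$ yields the claimed $T_{321}$.

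The genuine gap is the middle step, which you yourself flag as the ``main obstacle'': you never actually establish the enumeration of irreducibles, only assert that the two-sided avoidance constraints should ``yield a recursive description'' translating into $(1-2x)I(I-x)=x^3$. That enumeration is the entire substantive content of the theorem; the rest is routine. The paper spends four lemmas on it: Lemma~\ref{lem: irredEnd} shows an irreducible $\pi\in\mathcal{T}_n(321)$ must have either $\hat\pi_n=n-1$ or $\hat\pi$ ending in $ni$ with $\lceil n/2\rceil\leq i\leq n-2$ (the lower bound $\lceil n/2\rceil$ on $i$, which your sketch does not anticipate, is essential and comes from a counting argument on the entries trapped between $i$ and $n$); Lemma~\ref{lem: irred_n-1} handles the first case by appending; and Lemmas~\ref{lem: irred_ni} and~\ref{lem: ir n-2} derive the recurrence $a(n,i)=\sum_{j=\lceil(n-2)/2\rceil}^{i-1}a(n-2,j)$ by deleting the $2$-cycle $(n,i)$, which solves to $\binom{i-2}{\lceil(n-2)/2\rceil-1}$ and sums to the central binomial coefficient. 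Your proposed route through a functional equation for $I(x)$ is not obviously attainable from the structure: the natural recursion here is a two-parameter one in $(n,i)$ that drops $n$ by $2$, not a self-convolution of $I$, so even granting the structural analysis you would still need to connect it to your algebraic relation. As written, the proposal verifies the answer rather than proves it.
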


% \begin{proof}
%     Let $\hat\pi = \theta(\pi)$. Each cycle in the cycle decomposition starts with the largest element in the cycle, so for $\hat\pi$ to avoid $321$ all the entries after the first must be increasing. It also must be the case that all the entries that are not the first entry in each cycle must form an increasing subsequence, so our cycle decomposition looks like
%     \[
%     (\alpha_1 \tau_1 \tau_2 \cdots \tau_{k_1})(\alpha_2 \tau_{k_1+1} \cdots \tau_{k_2}) \cdots (\alpha_j \tau_{k_{j-1}+1} \cdots \tau_{k_j})
%     \]
%     where $\alpha_1 < \alpha_2 < \cdots < \alpha_j$ and $\tau_1 < \tau_2 < \cdots < \tau_{k_j}$. We also have that $\alpha_i > \tau_{k_i}$.
    
%     \todo{the cycles must all be of the form $(n,1,2,3,\ldots)$}
%     \todo{idea for proof: consider the irreducible ones -- there are $\binom{n}{n/2}.$ Perhaps you can first show that $\hat\pi$ must either end in $n-1$ or have $n$ in the second to last position. Counting those that end in $n-1$ seems easy. Harder to deal with the two-cycle $(n,i)$ at the end.}
%     \todo{I've tried thinking about this today, but I'm stumped!}
% \end{proof}

We approach this by first enumerating the number of such permutations that are irreducible. To this end, we let $a_n(321)$ denote the number of irreducible permutations $\pi \in \mathcal{T}_n(321)$ and let $a(n,i)$ denote the number of such irreducible permutations with the property that $\pi_i=n$ (or equivalently, that $\hat\pi=\theta(\pi)$ ends with the element $i$).

Let us first consider the structure of irreducible permutations in $\mathcal{T}_n(321).$

\begin{lemma} \label{lem: irredEnd}
    Suppose $n\geq 2$. If $\pi \in \mathcal{T}_n(321)$ is irreducible and $\hat\pi=\theta(\pi)$, then either:
    \begin{itemize}
        \item $\hat\pi_n=n-1$ (equivalently, $\pi_{n-1}=n$), or
        \item $\hat\pi_{n-1}\hat\pi_n=ni$ (equivalently, $\pi_n=i$ and $\pi_i=n$) where $\lceil n/2 \rceil \leq i \leq n-2$.
    \end{itemize}
\end{lemma}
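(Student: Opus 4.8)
The plan is to read off the structure of the last one or two cycles of $\pi$ directly from the fact that $\hat\pi=\theta(\pi)$ ends with the cycle containing $n$. Writing $\pi$ in standard cycle form, the cycle of $n$ is the final cycle, say $(n,a_1,\dots,a_k)$, so $\hat\pi$ terminates in $n\,a_1\,a_2\cdots a_k$ and every entry of $\hat\pi$ after $n$ is one of the $a_j$. Since $n$ is the global maximum and $\hat\pi$ avoids $321$, no two of these trailing entries may descend, forcing $a_1<a_2<\cdots<a_k$. I would first dispose of a degenerate case using irreducibility: if $n$ were a fixed point then $\pi=\pi'\oplus 1$ would be reducible, so $k\ge 1$. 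Translating back to one-line notation records the facts I will use repeatedly: $\pi_{a_k}=n$ and $\pi_n=a_1$, and because $\pi$ avoids $321$ with $n$ sitting in position $a_k$, the entries in positions $a_k+1,\dots,n$ must increase up to $\pi_n=a_1$ and hence are all at most $a_1$.

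The dichotomy then corresponds to whether $a_k=n-1$. The first bullet is exactly the case $a_k=n-1$; here I would just verify the stated equivalence, namely that $a_k$ is the element mapping to $n$, so $a_k=n-1$ iff $\pi_{n-1}=n$ iff $\hat\pi_n=n-1$. So assume instead $a_k\le n-2$, meaning $n-1$ does not lie in the cycle of $n$. Because cycles are ordered by their largest element, the cycle containing $n-1$ is then the second-to-last cycle, say $(n-1,b_1,\dots,b_m)$, and applying the $321$-avoidance of $\hat\pi$ at the position of $n-1$ (whose trailing entries smaller than $n-1$ are $b_1,\dots,b_m,a_1,\dots,a_k$, skipping $n$) forces $b_1<\cdots<b_m<a_1<\cdots<a_k$.

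Two short contradictions then pin down $k=1$. If $m=0$, i.e. $n-1$ is a fixed point, then position $n-1>a_k$ carries the value $n-1>a_1$, contradicting that all entries past position $a_k$ are at most $a_1$; hence $m\ge 1$ and $\pi_{b_m}=n-1$ with $b_m<a_1$. If in addition $k\ge 2$, then the positions $b_m<a_1<n$ carry values $n-1>a_2>a_1$, a $321$ pattern in $\pi$. Both are impossible, so $k=1$: the cycle of $n$ is a $2$-cycle $(n,i)$ with $i=a_1\le n-2$, which is the second bullet after translating to $\pi_n=i$ and $\pi_i=n$. Finally, the lower bound $i\ge\lceil n/2\rceil$ follows from $321$-avoidance alone: with $\pi_i=n$, the $n-i$ entries in positions $i+1,\dots,n$ increase up to $\pi_n=i$, so they are distinct values at most $i$, giving $n-i\le i$.

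The main obstacle is recognizing that the cycle of $n-1$ must sit immediately before the cycle of $n$ and that its terminal element $b_m$ (the preimage of $n-1$) combines with $a_1,a_2$ to produce a forbidden $321$ in the one-line form of $\pi$; once this interaction is spotted, the rest is bookkeeping. It is worth noting that irreducibility is invoked only to rule out $n$ being a fixed point, while both bounds on $i$ come purely from the $321$-avoidance of $\pi$ and $\hat\pi$.
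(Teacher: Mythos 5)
Your proposal is correct and follows essentially the same route as the paper: both isolate the final cycle $(n,a_1,\dots,a_k)$, force $a_1<\cdots<a_k$ from $321$-avoidance of $\hat\pi$, split on whether $n-1$ is fixed or lies in the penultimate cycle to produce the same two forbidden patterns ($n(n-1)a_1$ and $(n-1)a_2a_1$ in $\pi$), and use the same pigeonhole count of entries after position $i$ to get $i\ge\lceil n/2\rceil$. The only differences are expository (you make explicit that the cycle of $n-1$ is adjacent to that of $n$ and that all entries past position $a_k$ are at most $a_1$), so no further changes are needed.
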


\begin{proof}
Since $\pi$ is irreducible and $n\geq 2$, we must have $\pi_n<n$, and so $\hat\pi_n<n$, so for the sake of contradiction, assume $\pi_n<n-2$ and is not of the form described in the second bullet point in the statement of the theorem. First, suppose 
$\hat{\pi} = \theta(\pi)$ ends in $ni_1i_2\ldots i_k$ where $k \geq 2$  with $i_1 < i_2 < \cdots < i_k$ and $i_k \not= n-1$. There are two cases to consider: either $n-1$ is a fixed point in $\pi$ or not. If $n-1$ is fixed, then $\pi$ contains a $321$ given by $n(n-1)i_1$. If $n-1$ is not fixed, then it appears as part of a cycle $(n-1, j_1, \ldots, j_\ell)$. So $\hat{\pi}$ ends with the sequence $(n-1) j_1 \ldots j_\ell ni_1i_2\ldots i_k$. Since $\hat{\pi}$ avoids $321$, $j_1 <  \cdots < j_\ell < i_1 < \cdots < i_k$. However,  $\pi$ then contains $\pi_{j_\ell}\pi_{i_1}\pi_n=(n-1) i_2 i_1$ as a subsequence, which is a 321 pattern.

Now suppose $\hat{\pi}$ ends in $ni$ with $i < \lceil \tfrac{n}{2} \rceil$. Then $\pi_i = n$ and $\pi_n = i$, but there are more than $\tfrac{n}{2}$ elements between $i$ and $n$, and less than $\tfrac{n}{2}$ numbers smaller than $i$. This means there will be some $j>i$ appearing after $n$ in $\pi$ and thus $\pi$ will necessarily contain an occurrence of $321$, namely $nji$.
\end{proof}

%Let $a(n,i)$ denote the number of $\pi \in \mathcal{T}_n(321)$ that are irreducible with $\theta(\pi)$ ending in $i$. We will show that $a(n,i) = \binom{i-2}{\lceil \tfrac{n-2}{2} \rceil - 1}$ for $n \geq 6$ and $\lceil n/2 \rceil \leq i \leq n-2$, and that $a(n,n-1) = a_{n-1}$.
Now, let us find the value of $a(n,i)$ for all possible values $\lceil n/2 \rceil \leq i \leq n-1$, starting with the special case when $i=n-1.$

\begin{lemma} \label{lem: irred_n-1}
    For $n \geq 3$, $a(n,n-1) = a_{n-1}(321).$ %= \binom{n-3}{\lfloor \frac{n-3}{2} \rfloor}$.
\end{lemma}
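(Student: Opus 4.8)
The plan is to exhibit an explicit bijection $\Phi$ between the set of irreducible permutations $\pi \in \mathcal{T}_n(321)$ with $\pi_{n-1}=n$ (equivalently $\hat\pi_n = n-1$), which is counted by $a(n,n-1)$, and the set of \emph{all} irreducible permutations in $\mathcal{T}_{n-1}(321)$, which is counted by $a_{n-1}(321)$. The map $\Phi$ simply deletes the entry $n$ from the one-line notation of $\pi$; since $\pi_{n-1}=n$, this yields $\pi' := \pi_1\cdots\pi_{n-2}\pi_n \in \S_{n-1}$, and the inverse inserts $n$ directly before the last entry.

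The key preliminary step is to record the cycle-theoretic meaning of the hypothesis. Since $\pi_{n-1}=n$, the element $n-1$ is the unique predecessor of $n$, so the cycle of $\pi$ containing $n$ is the final cycle in standard form and has the shape $(n, b_1, \dots, b_\ell, n-1)$ with every $b_j < n-1$. Deleting $n$ reroutes $n-1 \mapsto b_1$ and turns this cycle into $(n-1, b_1, \dots, b_\ell)$, leaving all other cycles unchanged; as $n-1$ still exceeds every other cycle maximum, this cycle remains last. Consequently
\[
\theta(\pi) = w\, n\, b_1\cdots b_\ell\,(n-1), \qquad \theta(\pi') = w\,(n-1)\,b_1\cdots b_\ell,
\]
where $w$ is the common parenthesis-free prefix coming from the earlier cycles. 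Thus $\theta(\pi')$ is obtained from $\theta(\pi)$ by deleting the final entry $n-1$ and standardizing, the value $n-1$ occurring only at the end of $\theta(\pi)$ because $w$ and the $b_j$ use values below $n-1$.

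With this in hand, the membership and irreducibility checks are routine. Since $\pi'$ is obtained by deleting an entry from $\pi$, and $\theta(\pi')$ by deleting an entry from $\theta(\pi)$ and standardizing, and since both operations preserve $321$-avoidance, we get $\pi' \in \mathcal{T}_{n-1}(321)$. For irreducibility note that $\pi'_j = \pi_j$ for $j \le n-2$: if some proper prefix satisfied $\{\pi'_1, \dots, \pi'_k\} = [k]$ with $k \le n-2$, then $\{\pi_1,\dots,\pi_k\}=[k]$ as well, contradicting the irreducibility of $\pi$. Hence $\Phi$ lands in the target set. For the inverse I would send $\rho = \rho_1\cdots\rho_{n-1}$ to $\rho_1\cdots\rho_{n-2}\,n\,\rho_{n-1}$ and verify symmetrically: inserting the maximum at position $n-1$ cannot create a $321$ (only one entry follows it), appending $n-1$ at the end of $\theta$ cannot create a $321$ (only $n$ exceeds it), and the result is irreducible with $\pi_{n-1}=n$. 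The two maps are mutually inverse by construction, giving $a(n,n-1)=a_{n-1}(321)$.

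The only genuinely delicate point is the middle step: correctly translating the one-line deletion of the value $n$ into the effect on $\theta(\pi)$. Everything downstream is immediate once we know that $\pi_{n-1}=n$ forces $n-1$ to sit at the very end of the final standard cycle, so that passing between $\pi$ and $\pi'$ amounts to deleting or appending a single extremal entry of $\theta$. I expect this cycle-structure bookkeeping to be where the care is needed, while the avoidance and irreducibility verifications reduce to the fact that deleting or inserting an extremal value cannot introduce a $321$.
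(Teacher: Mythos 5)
Your proof is correct and uses essentially the same bijection as the paper: the paper describes the map in the insertion direction (placing $n$ just before the last entry of an irreducible $\pi\in\mathcal{T}_{n-1}(321)$, which appends $n-1$ to the end of $\theta(\pi)$), while you describe its inverse, deleting the entry $n$ from position $n-1$. Your cycle-form bookkeeping fills in the details the paper leaves to the phrase ``this process is clearly invertible.''
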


\begin{proof}
    Given an irreducible permutation $\pi \in \mathcal{T}_{n-1}(321)$, with $\pi = \pi_1 \ldots \pi_{n-2}\pi_{n-1}$, we can obtain a permutation $\pi' = \pi_1 \ldots \pi_{n-2} n \pi_{n-1}$ in $\mathcal{T}_{n}(321)$, that is also irreducible with the property that $\pi_{n-1}=n$. Note that this corresponds to appending $n-1$ to the end of the image under $\theta$ (adjusting the other values of $\theta(\pi)$ as necessary).
    Furthermore, all such irreducible permutations in $\mathcal{T}_n(321)$ can be obtained this way since this process is clearly invertible. 
    % This map is invertible, to see this delete the $n$ in any irreducible permutation of length $n$ whose fundamental bijection ends in $n-1$. The image is still irreducible since $\pi$ was, it also still avoids $321$ since $\pi$ did. $\theta(\tilde{\pi})$ ends in $n-1$ since $\tilde{\pi}_{n-1} = n$, and still avoids $321$ because we obtain $\theta(\tilde{\pi})$ from $\theta(\pi)$ by replacing $n-1$ with $n$ and adding $n-1$ to the end of $\theta(\pi)$.
\end{proof}

%Notice that $\binom{n-3}{\lfloor \frac{n-3}{2} \rfloor} = \binom{n-3}{\lceil \frac{n-2}{2} \rceil -1}$, so the following result actually holds true for $i = n-1$ as well, but we include Lemma \ref{lem: irred_n-1} to remove an additional case from the following proof and to explicitly state the bijection between irreducible permutations $\pi \in \mathcal{T}_n(321)$ with $\theta(\pi)$ ending in $n-1$ and irreducible permutations in $\mathcal{T}_{n-1}(321)$.

\begin{lemma} \label{lem: irred_ni}
    For $n \geq 6$ and $\lceil n/2 \rceil \leq i < n-2$, 
    %\[
    %a(n,i) = \sum_{j=\lceil \tfrac{n-2}{2} \rceil}^{i-1} a(n-2,j)
    %= a_{n-2} - a_{n-3} - \sum_{j = i}^{n-4} a(n-2,j)
    %\]
    %which implies that 
    $a(n,i) = \displaystyle\binom{i-2}{\lceil \tfrac{n-2}{2} \rceil - 1}$.
\end{lemma}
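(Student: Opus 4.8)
The plan is to determine the rigid one-line shape of such a $\pi$, read off the cycle structure that keeps $\hat\pi=\theta(\pi)$ $321$-avoiding, and then count the surviving configurations.

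First I would fix the shape. By the second alternative of Lemma~\ref{lem: irredEnd} we have $\pi_i=n$ and $\pi_n=i$, so $(n,i)$ is a $2$-cycle of $\pi$. Since $\pi_i=n$ is the global maximum and $\pi$ avoids $321$, the entries in positions $i+1,\dots,n$ must increase; as $\pi_n=i$, positions $i+1,\dots,n-1$ hold an increasing run of a set $C\subseteq\{1,\dots,i-1\}$ with $|C|=n-1-i$, while position $n$ holds $i$. Hence positions $1,\dots,i-1$, the ``first block'' $B$, hold the remaining values $\{i+1,\dots,n-1\}\cup C'$ with $C'=\{1,\dots,i-1\}\setminus C$. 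Checking the few places a decreasing triple can live then gives the first reduction: $\pi$ avoids $321$ exactly when the non-left-to-right-maxima of $B$ form an increasing sequence lying entirely below $\min C$.

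Next I would translate avoidance of $321$ by $\hat\pi$ into cycle language. In standard cycle form the cycle maxima are precisely the left-to-right maxima of $\hat\pi$, so $\hat\pi$ avoids $321$ iff the non-maxima, read cycle by cycle, increase; equivalently the cycle tails split the sorted non-maxima into consecutive blocks assigned to the maxima in increasing order. Because the last letter of $\hat\pi$ is the non-maximum $i$, every non-maximum is at most $i$, which forces all of $\{i+1,\dots,n\}$ to be cycle maxima (and $\{i\}$ to be the tail of $n$). Thus a valid $\pi$ is encoded by the set $S\subseteq\{1,\dots,i-1\}$ of extra, small cycle maxima together with the tail sizes, where the tails are consecutive increasing blocks of $\{1,\dots,i\}\setminus S$.

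The crux is to push the first-block condition back through $\theta^{-1}$ and see which pairs $(S,\text{tails})$ really yield a $321$-avoiding, irreducible $\pi$. I would reconstruct $\pi$ from $(S,\text{tails})$ and argue that ``$B$ is $321$-avoiding with non-maxima below $\min C$'' forces the tails to be uniquely determined by $S$ and imposes a monotonicity (ballot-type) constraint on $S$, while irreducibility forbids $B$ from beginning with a complete initial segment $\{1,\dots,k\}$ (in particular forbids $1\in S$). The main obstacle is exactly this combination: showing the two avoidance conditions are jointly equivalent to selecting a $(\lceil n/2\rceil-2)$-element subset of an $(i-2)$-element ground set. A convenient way to finish the count is to establish the Pascal-type recurrence $a(n,i)=a(n,i-1)+a(n-2,i-1)$ by a suitable decomposition of the admissible configurations; together with the base case $i=\lceil n/2\rceil$ this yields $\binom{i-2}{\lceil n/2\rceil-2}$, with the boundary case $i=n-2$ treated separately.
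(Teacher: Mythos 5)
Your structural observations are sound as far as they go: the one-line shape forced by $\pi_i=n$ and $\pi_n=i$, the identification of cycle maxima with left-to-right maxima of $\hat\pi$, and the conclusion that all of $\{i+1,\dots,n\}$ must be cycle maxima are all correct. But the argument has a genuine gap at exactly the point you yourself flag as ``the main obstacle'': you never show that the admissible pairs $(S,\text{tails})$ correspond to choosing a $(\lceil n/2\rceil-2)$-element subset of an $(i-2)$-element set, and you never establish the Pascal-type recurrence $a(n,i)=a(n,i-1)+a(n-2,i-1)$ offered as the alternative finish. Both are asserted via ``I would argue'' and ``by a suitable decomposition,'' so the counting step---which is the entire content of the lemma---is missing. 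In particular, the claims that the tails are uniquely determined by $S$ and that $S$ obeys a ballot-type constraint are nontrivial and unverified, and it is not clear that the proposed recurrence exists at the level of an explicit decomposition of configurations rather than merely being consistent with the target binomial formula.

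The paper's proof takes a different and more economical route that you may want to adopt: delete the $2$-cycle $(n,i)$ from $\pi$ and standardize to get an irreducible $\pi'\in\mathcal{T}_{n-2}(321)$; use Lemma~\ref{lem: irredEnd} to show $\hat\pi'$ must end in $(n-2)j$ with $\lceil\tfrac{n-2}{2}\rceil\le j\le i-1$ (the endings $n-3$ and $(n-2)j$ with $j\ge i$ each reintroduce a $321$ in $\pi$); and check that the deletion is reversible for exactly those $j$. This yields $a(n,i)=\sum_{j=\lceil (n-2)/2\rceil}^{i-1}a(n-2,j)$, and the formula follows by induction on $n$ together with the hockey-stick identity. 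Replacing your unproved Pascal recurrence with this deletion bijection is the most direct way to close the gap, since it reduces the $(n,i)$ count to already-controlled counts at size $n-2$.
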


\begin{proof}
    Suppose $\pi \in \mathcal{T}_n(321)$ with $\hat\pi=\theta(\pi)$ ending in $\hat\pi_{n-1}\hat\pi_n=ni$ for $\lceil n/2 \rceil \leq i < n-2$, so that $\pi = \pi_1 \ldots \pi_{i-1} n \pi_{i+1} \ldots \pi_{n-1} i$. Let us obtain a new permutation $\pi'$ in $S_{n-2}$ by removing $n$ and $i$ from $\pi$ and subtracting one from every element of $\pi$ that is greater than $i$. For example, if $\pi=247189356$, then $\pi'=2461735$. %[4, 1, 2, 7, 3, 8, 5, 9, 6]$ %Thus $\pi' = \tilde{\pi}_1 \cdots \tilde{\pi}_{i-1} \tilde{\pi}_{i+1} \cdots \tilde{\pi}_{n-1}$ where
    %$\[
    %\tilde{\pi}_j=
    %\begin{cases}
    %    \pi_j & \pi_j < i\\
    %    \pi_j - 1 & \pi_j > i
    %\end{cases}.
    %\]
    Notice that $\pi'$ still avoids $321$ since $\pi$ does and also, $\pi'$ is still irreducible since $\pi$ is and since $\pi_j<i$ for all $i<j<n$. Furthermore, $\hat\pi'=\theta(\pi')$ also avoids $321$ since we can obtain $\hat\pi'$ from $\hat\pi$ by removing $ni$ from the end of $\hat\pi$ and subtracting one from all the elements larger than $i$. For example, given $\pi=247189356$, and $\pi'=2461735$, we see that $\hat\pi=412738596$ and $\hat\pi'=4126375$. This works since $n$ and $i$ formed a 2-cycle in $\pi$ and so removing those two elements from $\pi$ corresponds to removing that cycle from the standard cycle form of $\pi.$ %This does not change the relative order of the remaining elements of $\theta(\pi)$.

    This means $\pi' \in \mathcal{T}_{n-2}(321)$ and is irreducible. By Lemma \ref{lem: irredEnd}, $\hat\pi'=\theta(\pi')$ can either end in $n-3$ or $(n-2)j$ for some $\lceil \frac{n-2}{2} \rceil \leq j \leq n-4$.

    %Let us first suppose that $i<n-2$. 
    Let us first see that $\hat\pi'$ does not end in $n-3.$ If it did, that would imply that $\pi_{n-1}=n-2$. But then $\pi$ contains $\pi_i\pi_{n-1}\pi_n=n (n-2) i$, which is a 321 pattern.
    If $\hat\pi'$ ends in $(n-2) j$ for $i \leq j \leq n-4$ this means $\pi$ contains the cycle $(n-1,j+1)$. But then $\pi$ contains $\pi_{i}\pi_{j+1}\pi_n=n (n-1)i$ which is a $321$ pattern.

    So we know that $\hat\pi'$ ends with $(n-2)j$ for $\lceil \frac{n-2}{2} \rceil \leq j \leq i-1$. 
    In fact, any such permutation in $\mathcal{T}_{n-2}(321)$ with these properties can be obtained from a permutation in $\mathcal{T}_n(321)$ with the property that $\pi_i=n$ and $\pi_n=i$ for $\lceil n/2\rceil\leq i<n-2$. Indeed, starting with such a permutation in $\tau\in\mathcal{T}_{n-2}(321)$, and adding the two-cycle $(n,i)$ (adjusting the other elements of $\tau$ accordingly), must still be irreducible and must still avoid 321 since in $\tau$, the elements between $n-2$ and $j$ are each less than $j$, thus less than $i$, and appear in increasing order. % %Let us see that the number of permutations that end with %In either case, this shows that $a(n,i) \leq \sum_{j=\lceil \tfrac{n-2}{2} \rceil}^{i-1} a(n-2,j)$. 
    
    %Note that for any irreducible permutation $\tau \in \mathcal{T}_{n-2}(321)$ with $\theta(\tau)$ ending in $(n-2) j$ for $\lceil \frac{n-2}{2} \rceil \leq j \leq i-1 < n-3$, if we shift every element above position $i$ up one position, add one to every element greater than $i-1$, place $n$ in position $i$, and place $i$ in position $n$, we get an irreducible permutation in $\pi \in \mathcal{T}_{n}(321)$ with $\theta(\pi)$ ending in $ni$.

    %The reason it is still 321 avoiding is because in $\tau$ every element between $n-2$ and $j$ is less than $j$, so less than $i$. When we perform this operation, we place $n$ in position $i$ which is to the right of $n-2$ and end with $i$ which is larger than $j$. So now all the elements between $n$ and $i$ in the new permutation are necessarily smaller than $j$ and therefore smaller than $i$ so cannot create a $321$.

    %This one is more clearly still 321 avoiding.

    %\todo{Should I add more detail to the previous two paragraphs? I think the only thing to really clarify is why the resulting permutation is still $321$ avoiding. I put an explanation, but commented it out. Let me know if you think it should be there. I can put an example after which will make it clearer.}

    Thus, we have the established the recurrence
\[
    a(n,i) = \sum_{j=\big\lceil \tfrac{n-2}{2} \big\rceil}^{i-1} a(n-2,j).
    %= a_{n-2} - a_{n-3} - \sum_{j = i}^{n-4} a(n-2,j).
    \]
    Let us now see that $a(n,i) = \binom{i-2}{\lceil \tfrac{n-2}{2} \rceil - 1}$ for any $\lceil n/2 \rceil \leq i < n-2$ by strong induction on $n$. The base cases when $n=6,7$ are easy to verify.
    %When n=6, $a(6,3) = 1$ and $a(6,4) = 2$. When $n = 7$, $a(7,4) = 1$ and $a(7,5) = 3$.
    Suppose that our inductive hypothesis holds for all $k\leq n-1$, and we will show that $a(n,i) = \binom{i-2}{\lceil \tfrac{n-2}{2} \rceil - 1}$ for $\lceil n/2 \rceil \leq i < n-2$. Indeed,
    \[
    a(n,i) = \sum_{j=\lceil \tfrac{n-2}{2} \rceil}^{i-1} a(n-2,j) = \sum_{j=\lceil \tfrac{n-2}{2} \rceil}^{i-1} \binom{j-2}{\lceil \frac{n-4}{2} \rceil -1} = \sum_{j=\lceil \tfrac{n-4}{2} \rceil -1}^{i-3} \binom{j}{\lceil\frac{n-4}{2} \rceil -1} = \binom{i-2}{\lceil \frac{n-2}{2} \rceil -1}.
    \]
    The first equality is the established recursion, the second equality follows by strong induction, %(and Lemma \ref{lem: irred_n-1} if $i = n-2$),
    the third equality is re-indexing and the final equality is a well known identity for binomial coefficients.
\end{proof}

With this lemma established, we handle the final remaining case.

\begin{lemma}\label{lem: ir n-2}
    For $n\geq 6$, $a(n,n-2)=\displaystyle\binom{n-4}{\lceil \tfrac{n-2}{2} \rceil - 1}$
\end{lemma}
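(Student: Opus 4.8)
The plan is to mirror the proof of Lemma~\ref{lem: irred_ni} as closely as possible, using the same two-cycle reduction but tracking carefully the one place where the boundary value $i=n-2$ behaves differently. I would start from an irreducible $\pi\in\mathcal{T}_n(321)$ whose image $\hat\pi=\theta(\pi)$ ends in $\hat\pi_{n-1}\hat\pi_n=n(n-2)$; by Lemma~\ref{lem: irredEnd} this means $(n,n-2)$ is a $2$-cycle of $\pi$, i.e.\ $\pi_{n-2}=n$ and $\pi_n=n-2$. Removing this $2$-cycle and relabelling (delete the values $n$ and $n-2$, send $n-1\mapsto n-2$) produces $\pi'\in\S_{n-2}$, and exactly as before one checks that $\pi'$ is irreducible, that $\pi'$ avoids $321$, and that $\hat\pi'=\theta(\pi')$ avoids $321$ (the latter because removing the $2$-cycle $(n,n-2)$ deletes precisely the last cycle of the standard form). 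Thus $\pi'\in\mathcal{T}_{n-2}(321)$ is irreducible.

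The key step, and the one genuine difference from Lemma~\ref{lem: irred_ni}, is to determine which endings of $\hat\pi'$ can occur. By Lemma~\ref{lem: irredEnd}, $\hat\pi'$ ends either in $n-3$ or in $(n-2)j$ with $\lceil\tfrac{n-2}{2}\rceil\le j\le n-4$. In Lemma~\ref{lem: irred_ni} the ending $n-3$ was impossible, because there position $n-3$ of $\pi'$ pulled back to position $n-2$ of $\pi$ and forced the decreasing triple $n(n-1)i$. Here, with $i=n-2$, the only position strictly between the two deleted positions $n-2$ and $n$ is $n-1$, so position $n-3$ of $\pi'$ now pulls back to position $n-3$ of $\pi$, placing the element $n-1$ to the left of $n$; the triple that previously gave a $321$ is now increasing, so the ending $n-3$ is permitted. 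Meanwhile the exclusion of endings $(n-2)j$ with $i\le j\le n-4$ is vacuous when $i=n-2$. Conversely, inserting the $2$-cycle $(n,n-2)$ into any irreducible $\pi'\in\mathcal{T}_{n-2}(321)$ returns a valid $\pi$: the only new descent that could create a $321$ would require $\pi_{n-1}=n-1$, which would force $n-2$ to be a fixed point of $\pi'$ and hence contradict irreducibility. This establishes the recurrence
\[
a(n,n-2)=\sum_{j=\lceil\frac{n-2}{2}\rceil}^{\,n-3} a(n-2,j),
\]
which is precisely the recurrence of Lemma~\ref{lem: irred_ni} evaluated at $i=n-2$ (where the sum terminates at $i-1=n-3$). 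Equivalently, since $j$ now ranges over \emph{every} possible ending of an irreducible permutation of size $n-2$, this says $a(n,n-2)=a_{n-2}(321)$.

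With the recurrence in hand the evaluation is the same hockey-stick computation as in Lemma~\ref{lem: irred_ni}, giving $a(n,n-2)=\binom{(n-2)-2}{\lceil\frac{n-2}{2}\rceil-1}=\binom{n-4}{\lceil\frac{n-2}{2}\rceil-1}$. I expect the main obstacle to be that this computation, and indeed Lemma~\ref{lem: irred_ni} itself, cannot be carried out in isolation: the summand $a(n-2,n-4)$ at the top of the previous range is itself an instance of the present lemma at size $n-2$, and the new summand $a(n-2,n-3)=a_{n-3}(321)$ (by Lemma~\ref{lem: irred_n-1}) is a total count rather than one of the binomial values. I would therefore run a single simultaneous strong induction on $n$ over three statements: the formula of Lemma~\ref{lem: irred_ni}, the formula of this lemma, and the total count $a_m(321)=\binom{m-2}{\lceil m/2\rceil-1}$ (which follows from the first two together with Lemma~\ref{lem: irred_n-1}). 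The only calculational care needed is to confirm that the boundary term fits the binomial pattern, i.e.\ that $a_{n-3}(321)=\binom{n-5}{\lceil\frac{n-4}{2}\rceil-1}$, for which the identity $\binom{k-2}{\lceil\frac{k-1}{2}\rceil-1}=\binom{k-2}{\lceil\frac{k}{2}\rceil-1}$ reconciles the two indexings and lets the hockey-stick sum telescope to the claimed value.
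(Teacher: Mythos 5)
Your proof is correct and takes essentially the same route as the paper: remove the $2$-cycle $(n,n-2)$, observe that (unlike in Lemma~\ref{lem: irred_ni}) no ending of $\hat\pi'$ is excluded, deduce $a(n,n-2)=\sum_{j=\lceil\frac{n-2}{2}\rceil}^{n-3}a(n-2,j)=a_{n-2}(321)$, and evaluate by the hockey-stick identity. If anything, you are more explicit than the paper about the inductive bookkeeping needed for the top two summands $a(n-2,n-4)$ and $a(n-2,n-3)$, which the paper glosses over with a bare citation of Lemma~\ref{lem: irred_ni}.
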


\begin{proof}
Let us suppose $\pi\in\mathcal{T}_n(321)$ with $\hat\pi=\theta(\pi)$ ending in $n(n-2)$, and so $\pi$ ends in $n\pi_{n-1}(n-2).$ Note that we must not have $\pi_{n-1}=n-1$ since this would give us 321 pattern, but that any other possible value of $\pi_{n-1}$ would not result in a 321 pattern. Indeed, we can append the 2-cycle $(n,n-2)$ to any permutation in $\tau\in\mathcal{T}_{n-2}(321)$ that does not end with $n-1$ (adjusting the remaining values of $\tau$ accordingly), and obtain all such permutations in $\mathcal{T}_n(321)$. Thus, there are $\sum_{j=\lceil \tfrac{n-2}{2} \rceil}^{n-3} a(n-2,j)$, which by Lemma~\ref{lem: irred_ni}, implies the result.
%If $i = n-2$, given any irreducible permutation $\tau \in \mathcal{T}_{n-2}(321)$ with $\tau_{n-3} = n-2$, replace $n-2$ with $n-1$, place $\tau_{n-2}$ into position $n-1$, place an $n$ in position $n-2$ and $n-2$ in position $n$. This gives an irreducible permutation $\pi \in \mathcal{T}_n(321)$ where $\theta(\pi)$ ends in $n \; n-2$.
\end{proof}

Since we have now found $a(n,i)$ for all possible values of $i$, we can now determine $a_n(321)$, the number of irreducible permutations in $\mathcal{T}_n(321).$

\begin{theorem}\label{thm: 321-Irred}
    For $n \geq 2$,
    \[
        a_n(321) = \binom{n-2}{\lfloor \frac{n-2}{2} \rfloor},
    \] with the corresponding generating function 
     \[
    A(x) = \frac{x-x^3c(x^2)}{1-x-x^2c(x^2)},
    \]
    where $c(x)$ is the generating function for the Catalan numbers.
\end{theorem}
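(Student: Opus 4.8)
The plan is to assemble the four preceding lemmas into the total count and then solve the resulting recurrence. Since $a_n(321)=\sum_i a(n,i)$, Lemma~\ref{lem: irredEnd} tells us the only nonzero terms come from $i=n-1$ together with $\lceil n/2\rceil\le i\le n-2$, so
\[
a_n(321)=a(n,n-1)+\sum_{i=\lceil n/2\rceil}^{n-2}a(n,i).
\]
Lemma~\ref{lem: irred_n-1} gives $a(n,n-1)=a_{n-1}(321)$, and Lemmas~\ref{lem: irred_ni} and~\ref{lem: ir n-2} together give $a(n,i)=\binom{i-2}{\lceil(n-2)/2\rceil-1}$ for all $i$ in the range $\lceil n/2\rceil\le i\le n-2$ (the endpoint value from Lemma~\ref{lem: ir n-2} is exactly this formula at $i=n-2$). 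The sum is then a single hockey stick: writing $m=\lceil(n-2)/2\rceil-1$, substituting $j=i-2$, and using that $\lceil n/2\rceil-2=\lceil(n-2)/2\rceil-1=m$, it becomes $\sum_{j=m}^{n-4}\binom{j}{m}=\binom{n-3}{m+1}=\binom{n-3}{\lceil(n-2)/2\rceil}$. This produces the recurrence
\[
a_n(321)=a_{n-1}(321)+\binom{n-3}{\lceil(n-2)/2\rceil},
\]
valid for $n$ large enough that the lemmas apply ($n\ge6$), with the small values checked by hand.

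Next I would prove $a_n(321)=\binom{n-2}{\lfloor(n-2)/2\rfloor}$ by induction on $n$, the base cases being immediate. For the inductive step I substitute $a_{n-1}(321)=\binom{n-3}{\lfloor(n-3)/2\rfloor}$ into the recurrence and must verify, with $N=n-2$, the identity
\[
\binom{N-1}{\lfloor(N-1)/2\rfloor}+\binom{N-1}{\lceil N/2\rceil}=\binom{N}{\lfloor N/2\rfloor}.
\]
A short parity casework reduces this to Pascal's rule $\binom{N}{t}=\binom{N-1}{t-1}+\binom{N-1}{t}$ at $t=\lfloor N/2\rfloor$: when $N=2t$ the two summands are already $\binom{N-1}{t-1}$ and $\binom{N-1}{t}$, and when $N=2t+1$ they are $\binom{N-1}{t}$ and $\binom{N-1}{t+1}=\binom{N-1}{t-1}$ (by the symmetry of binomial coefficients), so Pascal closes the identity in both cases. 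This gives the closed form for $n\ge2$, with the single length-one irreducible permutation accounting for $a_1(321)=1$.

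For the generating function I would recognize $\{a_n(321)\}$ as the central binomial coefficients (OEIS A001405). Since $a_1(321)=1$ and $a_n(321)=\binom{n-2}{\lfloor(n-2)/2\rfloor}$ for $n\ge2$,
\[
A(x)=x+x^2\sum_{m\ge0}\binom{m}{\lfloor m/2\rfloor}x^m,
\]
and splitting the inner sum over even and odd $m$ and using $\sum_k\binom{2k}{k}x^{2k}=(1-4x^2)^{-1/2}$ together with $\sum_k\binom{2k+1}{k}y^k=c(y)/\sqrt{1-4y}$ gives $\sum_{m\ge0}\binom{m}{\lfloor m/2\rfloor}x^m=\frac{1+x\,c(x^2)}{\sqrt{1-4x^2}}$. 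To convert this radical expression into the stated rational-in-$c$ form I would invoke the two Catalan relations $\sqrt{1-4x^2}=1-2x^2c(x^2)$ and $x^2c(x^2)^2=c(x^2)-1$: combining $A(x)=x+x^2B(x)$ over the denominator $1-2x^2c(x^2)$ and repeatedly replacing $x^2c(x^2)^2$ by $c(x^2)-1$ collapses the cross-multiplication to the common value $x-x^3c(x^2)-2x^3$, confirming $A(x)=\frac{x-x^3c(x^2)}{1-x-x^2c(x^2)}$.

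The combinatorial substance lives entirely in the four lemmas, so the theorem is essentially an assembly step; I expect the main obstacle to be purely bookkeeping. Specifically, the delicate points are (i) aligning the floor/ceiling indices so the hockey-stick lower limit $\lceil n/2\rceil-2$ really equals $m$ and the identity $\binom{n-3}{m+1}=\binom{n-3}{\lceil(n-2)/2\rceil}$ holds, and (ii) the final algebraic reduction from the square-root form to the rational-in-$c$ form, where one must apply the quadratic Catalan relation enough times to cancel all radical (equivalently, all $c^2$) terms without sign errors.
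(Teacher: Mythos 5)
Your proposal is correct and follows essentially the same route as the paper: combine Lemmas~\ref{lem: irredEnd}, \ref{lem: irred_n-1}, \ref{lem: irred_ni}, and \ref{lem: ir n-2} into the recurrence $a_n(321)=a_{n-1}(321)+\binom{n-3}{\lceil (n-2)/2\rceil}$ via the hockey-stick identity, then close the induction with Pascal's rule. The only difference is that you carry out the generating-function derivation explicitly (correctly, via $\sqrt{1-4x^2}=1-2x^2c(x^2)$ and $x^2c(x^2)^2=c(x^2)-1$), whereas the paper merely asserts that the generating function "can be obtained."
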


\begin{proof}
    We will proceed by induction on $n$. The cases $a_i(321)$ for $2\leq i \leq 6$ are easy to verify. Assume the result holds for $n-1 \geq 6$. By Lemmas \ref{lem: irredEnd} and \ref{lem: irred_n-1}, $a_n = a_{n-1} + \sum_{i=\lceil n/2 \rceil}^{n-2} a(n,i)$.

    Using Lemma \ref{lem: irred_ni} and \ref{lem: ir n-2}, we see that
    \[
    \sum_{i=\lceil n/2 \rceil}^{n-2} a(n,i) =  \sum_{i=\lceil n/2 \rceil}^{n-2} \binom{i-2}{\lceil\frac{n-2}{2}\rceil -1}=\sum_{k=0}^{\lfloor \tfrac{n}{2} \rfloor -2} \binom{\lceil \tfrac{n-2}{2} \rceil- 1 + k}{\lceil \tfrac{n-2}{2} \rceil -1} = \binom{n-3}{\lceil \tfrac{n-2}{2} \rceil}.
    \]
    By induction, $a_{n-1}$ is equal to $\binom{n-3}{\lfloor \tfrac{n-3}{2} \rfloor}$ and thus,
    \[
    a_n = a_{n-1} + \sum_{i=\lceil n/2 \rceil}^{n-2} a(n,i) = \binom{n-3}{\lfloor \tfrac{n-3}{2} \rfloor} + \binom{n-3}{\lceil \tfrac{n-2}{2} \rceil} = \binom{n-2}{\lfloor \tfrac{n-2}{2} \rfloor}.
    \]
    %I'm not sure if I should do an odd/even case argument here to be clearer. In the case $n$ is even, we have on the LHS: $\binom{n-3}{\lfloor \tfrac{n-2}{2} \rfloor -1} + \binom{n-3}{\lfloor \tfrac{n-2}{2} \rfloor}$. In the case $n$ is odd $\binom{n-2}{\lfloor \tfrac{n-2}{2} \rfloor} = \binom{n-2}{\lfloor \tfrac{n-2}{2} \rfloor + 1} by symmetry, and on the LHS we have $\binom{n-3}{\lfloor \binom{n-2}{2} \rfloor} + \binom{n-3}{\lfloor \binom{n-2}{2} \rfloor + 1}$.
    Finally, since the binomial coefficient $\binom{n-2}{\lfloor \tfrac{n-2}{2} \rfloor}$ is equal to the $\lfloor \tfrac{n-2}{2} \rfloor$-th Catalan number, the generating function in the statement of the theorem can be obtained. 
\end{proof}

Using the generating function for the number of irreducible permutations in $\mathcal{T}_n(321)$, we can obtain the generating function for all permutations in $\mathcal{T}_n(321)$.
\begin{proof}[Proof of Theorem \ref{thm: 321}]
    By Theorem \ref{thm: 321-Irred},  the generating function for irreducible permutations in $\mathcal{T}_n(321)$ is
    \[
    A(x) = \frac{x-x^3c(x^2)}{1-x-x^2c(x^2)},
    \]
    where $c(x)$ is the generating function for the Catalan numbers. All permutations in $\mathcal{T}_n(321)$ are of the form $\bigoplus_{j=1}^m \tau_{d_j}$ where $(d_1,\dots,d_m)$ is a composition of $n$ and $\tau_k$ is an irreducible permutation in $\mathcal{T}_k(321)$. Furthermore, for any such choice of $\tau_{d_j}$, $\bigoplus_{j=1}^m \tau_{d_j}$ is in $\mathcal{T}_n(321)$. It follows that the generating function for $\mathcal{T}_n(321)$ is $\frac{A(x)}{1-A(x)}$, which simplifies to
    \[
    T_{321}(x) = \frac{2x^2}{2x(1-x)-1+\sqrt{1-4x^2}}.
    \]
\end{proof}

%\section{Avoiding a pair of patterns}

%\todo{If we're doing $\theta^2$, etc., maybe we want to skip this section. I was thinking we could enumerate those permutations $\pi$ so that both $\pi$ and $\theta(\pi)$ avoid two permutations of length 3, but the answers are probably trivial.}

%We define $a_n(\sigma; \tau)$ to be all the permutations $\pi \in S_n$ whose one-line notation avoids $\sigma$ and $\theta(\pi)$ avoids $\tau$. So in the previous sections $a_n(\sigma) = a_n(\sigma;\sigma)$.
%I don't know if you or I wrote the definition above, but that should probably be left as an open question -- there are so many cases, it would make the paper super long.

\section{Avoidance in higher iterations of $\theta$} \label{sec: HigherIterations}

In this section, we consider $\sigma$-avoiding permutations that still avoid $\sigma$ under more than one iteration of $\theta.$ Let us denote by $\mathcal{T}^k_n(\sigma)$ the set of permutations $\pi\in\S_n$ so that $\pi, \theta(\pi), \theta^2(\pi),\ldots,\theta^k(\pi)$ all avoid the pattern $\sigma$. Notice that $\mathcal{T}^1_n(\sigma) = \mathcal{T}_n(\sigma).$ We will denote by $t^k_n(\sigma):=|\mathcal{T}^k_n(\sigma)|$.

For example, consider the permutation 
$\pi = 134579862$, which avoids the pattern $213.$ Notice that its standard cycle form is $\pi=(1)(9,2,3,4,5,7,8,6)$, so $\theta(\pi) = 192345786,$ which also avoids 213 and has cycle form $\theta(\pi) = (1)(7)(8)(9, 6, 5, 4, 3, 2)$. Therefore, we get $\theta^2(\pi) = 178965432$, which also avoids 213. In this case we would say that the original permutation $\pi = 134579862$ is in the set $\mathcal{T}^2(213).$ However, $\pi\not\in\mathcal{T}^3(213)$ since $\theta^3(\pi) = 1564839274$ contains a 213 pattern.

We first consider permutations in the set $\mathcal{T}_n^k(213),$ as summarized by Table~\ref{tab:231-higher}. Notice that for small values of $k$, we get varying results, but for all $k\geq 5,$ there are exactly 7 permutations with the property that $\pi, \theta(\pi), \theta^2(\pi),\ldots, \theta^k(\pi)$ all avoid 213. 

\renewcommand{\arraystretch}{2}
\begin{table}[htp]
    \centering
    \begin{tabular}{c|c}
         $k$ & $t_n^k(213)$\\ \hline
        1& $2F_{n+2}+n^2-6n+4$  \\
           2 & $\displaystyle\binom{n+1}{2}$  \\
           3 & $2n+1$  \\
           4 & $n+4$  \\
           $\geq 5$ & 7  
    \end{tabular}
    \caption{The number of permutations so that $\pi, \theta(\pi), \theta^2(\pi),\ldots, \theta^k(\pi)$ all avoid 213 for different values of $k.$ Notice the first result follows from Theorem~\ref{thm:213}, the second from Theorem~\ref{thm:213-t2}, and the remaining results follow from Theorem~\ref{thm:213-tk}.}
    \label{tab:231-higher}
\end{table}

\begin{theorem}\label{thm:213-t2}
    For $n\geq 4$, $t_n^2(213) =\binom{n+1}{2}$.
\end{theorem}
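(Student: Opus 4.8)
The plan is to first note that membership in $\mathcal{T}_n^2(213)$ is really a two-step condition: since $\theta^2(\pi) = \theta(\theta(\pi))$, a permutation $\pi$ lies in $\mathcal{T}_n^2(213)$ if and only if both $\pi$ and $\hat\pi = \theta(\pi)$ lie in $\mathcal{T}_n(213)$. In particular, applying Lemma~\ref{lem: 213-fixedIntervals} to each of $\pi$ and $\hat\pi$, I get that both permutations must consist of fixed points together with a single cycle whose non-maximal entries form an integer interval. I would record this as the basic structural constraint and use it throughout.

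Next I would reduce to irreducible permutations. A permutation avoids $213$ if and only if, in its decomposition into irreducible direct summands $\tau_1 \oplus \cdots \oplus \tau_m$, every summand avoids $213$ and every summand except possibly the last avoids $21$; since an irreducible permutation avoiding $21$ is the singleton $1$, each $213$-avoider is an increasing run of fixed points followed by a single irreducible $213$-avoiding block, i.e.\ $\pi = (12\cdots(n-m)) \oplus \rho$ with $\rho$ irreducible. Because $\theta$ commutes with $\oplus$ (Lemma~\ref{lem: thetaCommutes}), fixes the increasing prefix, and preserves irreducibility, one has $\theta^i(\pi) = (12\cdots(n-m)) \oplus \theta^i(\rho)$, and prepending an increasing run of fixed points to an irreducible permutation neither creates nor destroys a $213$ pattern. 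Therefore $\pi \in \mathcal{T}_n^2(213)$ if and only if $\rho$ is an irreducible element of $\mathcal{T}_m^2(213)$, and writing $b_m$ for the number of such irreducible permutations I obtain
\[
t_n^2(213) = \sum_{m=1}^n b_m.
\]

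It then remains to show that $b_m = m$ for all $m \geq 5$ (together with the small cases, in particular $\sum_{m=1}^4 b_m = 10$). For an irreducible $\pi$ the interval of Lemma~\ref{lem: 213-fixedIntervals} must begin at $1$, so the nontrivial cycle lives on $\{n\} \cup [1,s]$ with $[s+1,n-1]$ fixed, and the refined analysis from the proof of Theorem~\ref{thm:213} restricts this cycle to an increasing form, a decreasing form, or a genuinely cyclic (spanning) form. I would then impose the same structural constraint on $\hat\pi$ and, crucially, rule out $213$ patterns in $\theta^2(\pi)$: the cyclic case should be handled by transporting the enumeration of \cite{ABBGJ23} already used in Lemma~\ref{lem:213-cyc}, while the increasing and decreasing cases are pinned down by reading off the standard cycle form twice and counting the admissible values of $s$.

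I expect the main obstacle to be exactly this double application of $\theta$. Lemma~\ref{lem: 213-fixedIntervals} supplies only a necessary condition, and it is genuinely possible for $\pi$ and $\hat\pi$ to have the correct fixed-point-plus-cycle shape while $\theta^2(\pi)$ nonetheless contains $213$; for instance $25341 \mapsto 34512 \mapsto 52413$, where $34512$ is a single $5$-cycle (so it satisfies the structural constraint) yet $52413$ contains $213$. Thus the heart of the argument is a careful case analysis tracking how the increasing, decreasing, and cyclic cycle-shapes evolve under two applications of $\theta$, verifying avoidance two steps out rather than one. Once $b_m = m$ is established for $m \geq 5$, the theorem follows by telescoping the sum above against the base value $t_4^2(213) = 10 = \binom{5}{2}$, yielding $t_n^2(213) = \binom{n+1}{2}$ for all $n \geq 4$.
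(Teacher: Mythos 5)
Your reduction is sound and is essentially the paper's first step in different clothing: writing a $213$-avoider as an increasing prefix of fixed points followed by a single irreducible block, and noting that $\theta$ respects this decomposition, is equivalent to the paper's observation that permutations with $1$ as a fixed point contribute $t_{n-1}^2(213)$, so everything reduces to showing there are exactly $n$ ``new'' permutations at each size $n\geq 5$ (your $b_m=m$). You also correctly identify --- with a correct example, $25341\mapsto 34512\mapsto 52413$ --- that Lemma~\ref{lem: 213-fixedIntervals} gives only a necessary condition and that the real content is verifying avoidance after the \emph{second} application of $\theta$.

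But that is precisely the step you do not carry out, so the proposal has a genuine gap exactly where the proof must do its work. ``A careful case analysis tracking how the increasing, decreasing, and cyclic cycle-shapes evolve under two applications of $\theta$'' describes the problem rather than solving it: nothing in the proposal exhibits the $m$ surviving permutations or shows the remaining candidates fail. The paper does this concretely: it first shows an irreducible member of $\mathcal{T}_n^2(213)$ with $n\geq 5$ is either $n234\cdots(n-1)1$ or cyclic (the increasing/decreasing cycles on a proper subinterval allowed by Theorem~\ref{thm:213} are each killed by locating a $213$ in the cycle form of $\hat\pi$); it then splits the cyclic case according to whether $n-1$ is a fixed point of $\hat\pi$, producing the $n-2$ permutations with $\hat\pi=n12\cdots(s-1)(s+1)\cdots(n-1)s$ and a single further permutation with $\theta^2(\pi)$ decreasing, for a total of $n$. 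I would also caution that your plan to ``transport the enumeration of \cite{ABBGJ23}'' for the cyclic case cannot work as stated: Lemma~\ref{lem:213-cyc} controls avoidance in the one-line form and the cycle form, i.e.\ one application of $\theta$, whereas here you need a third layer (the cycle form of the cycle form), which that reference does not address. Until $b_m=m$ is actually established, the telescoping at the end proves nothing.
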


\begin{proof}
    First, notice that if $(1)$ is a fixed point, it will also be a fixed point of $\theta^k(\pi)$ for any exponent $k\geq 0$. Since 1 appears in the first position of $\theta^k(\pi)$, it cannot be part of any 213 pattern, and thus there are $t^2_{n-1}(213)$ permutations in $\mathcal{T}^2_n(213)$ that have 1 as a fixed point.

    Let us now consider those permutations that do not have $1$ as a fixed point. Suppose $\pi$ is such a permutation. We first claim that if $n\geq 5$, then either
    \begin{itemize}
     \item $\pi$ is the permutation $n234\ldots (n-1)1$, which implies $\theta(\pi)=23\ldots n1$ and $\theta^2(\pi) =n123\ldots n-1$, all of which avoid 213, or
     \item $\pi$ is cyclic.
    \end{itemize}
    Let us assume that $\pi$ is not the permutation listed in the first bullet above, and let us see why $\pi$ must indeed be cyclic. For the sake of contradiction, suppose not. Then, by the proof of Theorem~\ref{thm:213}, since 1 is not a fixed point and $\theta(\pi)$ avoids 213, the cycle form of $\pi$ must be either 
    \[\pi = (s)(s+1)\ldots(n-1)(n,s-1,\ldots, 2,1)\] or \[\pi = (s)(s+1)\ldots(n-1)(n,1,2,\ldots, s-1)\] for some value of $s\in [3,n-1]$.  Let us write $\hat\pi=\theta(\pi)$. Then since neither $1$ nor $n-1$ is a fixed point of $\hat\pi$, it must be cyclic by Lemma~\ref{lem: 213-fixedIntervals}. Thus, if $s<n-1,$ the cycle form of $\hat\pi$ must be either of the form
    \[\hat\pi = (n,1,s\ldots, 2,s+1,\ldots)\]
    or \[\hat\pi = (n,s-1,\ldots, 1,s,\ldots)\]
    both of which have a 213 pattern. If $s=n-1$, then either $\pi=(n-1)(n,n-2,\ldots,1)$ and so $\hat\pi$ has $(n,1,n-1,2)$ as a cycle, contradicting that $\hat\pi$ is cyclic, or $\pi=(n-1)(n,1,2,\ldots,n-2)$ and so $\hat\pi = (n,n-2,\ldots, 1,n-1,\ldots)$ which has a 213 pattern.
    Thus we have shown that $\pi$ is cyclic. 

    Now, since $\pi$ is cyclic, we must have that $\hat\pi_1=n$. Let us consider two cases: where $\hat\pi_{n-1}=n-1$, or $\hat\pi_{n-1}\neq n-1$.

    In the first case where $\hat\pi_{n-1}=n-1$, we must have $\hat\pi = n12\ldots (s-1)(s+1)\ldots (n-1)s$ for some $s$ since $\hat\pi$ avoids 213. But then in standard cycle form, $\hat\pi = (s+1)(s+1)\ldots (n-1)(n,s,s-1,\ldots, 2,1)$, and so $\theta(\hat\pi)$ avoids 213. Notice there are $n-2$ of these permutations for the $n-2$ possible values of $s.$

    In the second case where $\hat\pi_{n-1}\neq n-1$, we must have that $\hat\pi$ is cyclic by Lemma~\ref{lem: 213-fixedIntervals} since neither 1 nor $n-1$ are fixed points. We claim that in this case $\theta^2(\pi) =\theta(\hat\pi) = n(n-1)\ldots21$. For the sake of contradiction, suppose not. Then there is some $k$ so that \[\hat\pi=(n,b_1,\ldots,b_r, k+1, a_1,\ldots,a_t,k,k-1,\ldots,2,1).\] Thus, if $r\geq 1,$ since $b_i>a_j$ for all $i$ and $j$ since $\theta(\hat\pi)$ avoids 213, we have that in one-line notation, $\hat\pi = 23\ldots(k-1)(k+1)\ldots k\ldots b_1$ contains the 213 pattern  $(k+1)kb_1.$ If $r=0$, and so \[\hat\pi=(n,k+1, a_1,\ldots,a_t,k,k-1,\ldots,2,1),\] then it must be the case that $\hat\pi=n12\ldots(k-1)a_1\ldots km\ldots (k+1)$  where $m=\pi_{a_t+1}<a_1$ since $\hat\pi$ avoids 213. But then,\[\pi = (n,1,2,\ldots,(k-1),a_1,\ldots,k,m,\ldots ,(k+1)) = 2\ldots (k-1)a_1 m n \ldots (k+1)\ldots 1.\]
    where $a_1mn$ is a 213 pattern. 

    Finally, we can say that the only permutations that don't have 1 as a fixed point are as follows:
    \begin{itemize}
        \item $\theta(\pi) = n123\ldots (n-1)$,
        \item $\theta(\pi) = n(n-1)\ldots 21$, or
        \item $\theta(\pi) = n123\ldots (s-1)(s+1)\ldots (n-1)s$ for some $1\leq s\leq n-2$.
    \end{itemize}  This implies that the number of permutations in $\mathcal{T}_n^2(213)$ satisfies the recurrence \[t_n^2(213)=t_{n-1}^2(213)+n,\]
    and so the result follows. 
\end{proof}

\begin{theorem}\label{thm:213-tk}
    For $n\geq 5$, $t_n^3(213) =2n+1$, $t_n^4(213) =n+4$, and $t_n^k(213) =7$ for $k\geq 5$.
\end{theorem}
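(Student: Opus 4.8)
The plan is to reduce all three statements to a single recursion and then trace the forward $\theta$-orbit of finitely many explicit permutations until each first produces a $213$ pattern. As in the proof of Theorem~\ref{thm:213-t2}, if $\pi = 1\oplus\rho$ then Lemma~\ref{lem: thetaCommutes} gives $\theta^j(\pi)=1\oplus\theta^j(\rho)$ for all $j$, so $1$ is a fixed point of $\pi$ if and only if it is a fixed point of every $\theta^j(\pi)$, and such a fixed point never participates in a $213$ pattern. Hence, writing $b_n^k$ for the number of $\pi\in\mathcal{T}_n^k(213)$ that do \emph{not} fix $1$, we get
\[
t_n^k(213) = t_{n-1}^k(213) + b_n^k
\]
for all $k$. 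Since $\mathcal{T}_n^k(213)\subseteq\mathcal{T}_n^2(213)$, every permutation counted by $b_n^k$ is one of the $n$ permutations without a fixed point at $1$ enumerated in the proof of Theorem~\ref{thm:213-t2}. The whole problem therefore reduces to deciding, for each of those $n$ permutations, the smallest $j$ for which $\theta^j(\pi)$ contains $213$.

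Next I would compute these forward orbits explicitly. I claim the permutation $23\cdots n1$, the permutation $n23\cdots(n-1)1$, and the $s=1$ member $\pi_1$ of the family from Theorem~\ref{thm:213-t2} all lie on a single chain
\[
\pi_1 \xrightarrow{\ \theta\ } n23\cdots(n-1)1 \xrightarrow{\ \theta\ } 23\cdots n1 \xrightarrow{\ \theta\ } n12\cdots(n-1) \xrightarrow{\ \theta\ } \delta_n \xrightarrow{\ \theta\ } \theta(\delta_n),
\]
where each arrow is a short standard-cycle-form computation (for instance, $n23\cdots(n-1)1$ has cycle form $(2)(3)\cdots(n-1)(n,1)$, so $\theta$ sends it to $23\cdots n1$). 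Every permutation displayed except the last avoids $213$, so the key input is the lemma that $\theta(\delta_n)$ contains $213$ for $n\geq 4$. I would prove this from the explicit cycle form of $\delta_n$ recorded in the proof of Theorem~\ref{thm:312}: when $n$ is even the first three entries of $\theta(\delta_n)$ are $\tfrac n2+1,\ \tfrac n2,\ \tfrac n2+2$, and when $n$ is odd the entries in positions $1,3,4$ are $\tfrac{n+1}2,\ \tfrac{n+1}2-1,\ \tfrac{n+1}2+2$, each of which is a $213$ pattern. Reading off the distance of each chain element to $\theta(\delta_n)$ then shows $23\cdots n1\in\mathcal{T}_n^2\setminus\mathcal{T}_n^3$, that $n23\cdots(n-1)1\in\mathcal{T}_n^3\setminus\mathcal{T}_n^4$, and that $\pi_1\in\mathcal{T}_n^4\setminus\mathcal{T}_n^5$.

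The remaining and most delicate step is to show that the other permutations without a fixed point at $1$ in $\mathcal{T}_n^2(213)$ — the members $\pi_s$ with $2\leq s\leq n-2$ — already fail at the third iterate, i.e.\ $\theta^3(\pi_s)$ contains $213$. The proof of Theorem~\ref{thm:213-t2} identifies $\theta^2(\pi_s)$ as the permutation $w$ with one-line notation $s+1,s+2,\ldots,n,s,s-1,\ldots,1$, so it suffices to show $\theta(w)$ contains $213$. The clean case is when $w$ has a cycle not containing $n$ of length at least two: if such a cycle has largest element $m$ and next element $z<m$, then $\theta(w)$ reads $\cdots m\,z\cdots n\cdots$, and $m\,z\,n$ is a $213$ pattern. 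The obstacle is the exceptional case in which $w$ is a single $n$-cycle (occurring, for example, when $s=2$ and $n$ is even), where this argument fails and one must instead exhibit an interior $213$ pattern of $\theta(w)$ directly; I expect handling these single-cycle instances uniformly in $n$ and $s$ to be the crux of the proof.

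Finally, assembling the survival data gives $b_n^3=2$, $b_n^4=1$, and $b_n^k=0$ for $k\geq 5$, the last because $\mathcal{T}_n^k(213)\subseteq\mathcal{T}_n^5(213)$ once $b_n^5=0$. Substituting into the recursion with base cases at $n=5$, checked directly to be $t_5^3(213)=11$, $t_5^4(213)=9$, and $t_5^5(213)=7$, yields $t_n^3(213)=2n+1$ and $t_n^4(213)=n+4$ for $n\geq 5$. For $k\geq 5$ the recursion collapses to $t_n^k(213)=t_{n-1}^k(213)$, so $t_n^k(213)=t_5^k(213)$; a direct verification that the seven permutations in $\mathcal{T}_5^5(213)$ have $\theta$-orbits consisting entirely of $213$-avoiders shows $t_5^k(213)=7$ for every $k\geq 5$, completing the proof.
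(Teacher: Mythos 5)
Your overall strategy is the same as the paper's: peel off the fixed point at $1$ to get $t_n^k(213)=t_{n-1}^k(213)+b_n^k$, observe that the permutations counted by $b_n^k$ must be among the $n$ candidates produced in the proof of Theorem~\ref{thm:213-t2}, and track forward orbits of those candidates. Your chain through $\delta_n$, together with the check that $\theta(\delta_n)$ contains $213$ for $n\geq 4$, correctly places $23\cdots n1$, $n23\cdots(n-1)1$, and $\pi_1$ in $\mathcal{T}_n^2\setminus\mathcal{T}_n^3$, $\mathcal{T}_n^3\setminus\mathcal{T}_n^4$, and $\mathcal{T}_n^4\setminus\mathcal{T}_n^5$ respectively; this part matches the paper's computation.

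The gap is exactly where you say the crux lies, and you do not close it. For the family $\pi_s$ with $2\leq s\leq n-2$ you must show that $\theta(w)$ contains $213$, where $w=(s+1)(s+2)\cdots n\,s(s-1)\cdots 21$. Your dichotomy --- either $w$ has a nontrivial cycle avoiding $n$, or $w$ is a single $n$-cycle --- is not exhaustive: $w$ can also consist of fixed points together with one long cycle through $n$ (for instance $n=5$, $s=3$ gives $w=45321=(3)(5,1,4,2)$), a configuration your argument never addresses; and the single-$n$-cycle case, which you correctly flag, is only asserted to be handleable, not handled. The paper closes both holes at once with Lemma~\ref{lem: 213-fixedIntervals}: if $\theta(w)$ avoided $213$ then $w\in\mathcal{T}_n(213)$, and since neither $1$ nor $n-1$ is a fixed point of $w$, the structure established in the proof of Theorem~\ref{thm:213} forces $w$ to be a single $n$-cycle; one then checks that within this family the only $n$-cycle for $n\geq 5$ is $w=34\cdots n21$ with $n$ even, whose image $\theta(w)=n\,1\,3\,5\cdots(n-1)\,2\,4\cdots(n-2)$ contains the $213$ occurrence $3,2,4$. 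You need either this structural input or a direct, uniform argument covering all leftover cycle types of $w$; as written, the decisive step of the proof is missing.
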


\begin{proof}
    As in the previous proof, for any $k$, it is clear that there are $t_{n-1}^k(213)$ permutations in $\mathcal{T}_n^k(213)$ that have 1 as a fixed point. So let us consider the permutations that do not have 1 as a fixed point.
    By the proof to Theorem~\ref{thm:213-t2}, we know that the permutations in $\mathcal{T}^2_n(213)$ that don't have 1 as a fixed point have the property that either:
    \begin{itemize}
        \item $\pi =(2)(3)\ldots(n-1)(n,1)$ with $\theta^2(\pi) =n123\ldots(n-1)$,
        \item $\pi = (n,1,2,3,\ldots, n-1)$ with $\theta^2(\pi) = n(n-1)\ldots21,$ or
        \item $\pi =(n,1,2,\ldots,s-2,s,\ldots,n-1,s-1)$ with $\theta^2(\pi) =s(s+1)\ldots n(s-1)(s-2)\ldots 21$ for any $2\leq s\leq n-1$.
    \end{itemize} 
    Notice that in the first case, $\theta^3(\pi) = n(n-1)\ldots321$, which avoids 213. Notice that in the second and third case, $n-1$ is clearly not a fixed point of  $\theta^2(\pi)$, and so for $\theta^3(\pi)$ to avoid 213, by Lemma~\ref{lem: 213-fixedIntervals}, you would need the permutation to be cyclic. However, for $n\geq 5$, these are only cyclic when $\pi =(n,2,3,\ldots,n-1,1)$ with $\theta^2(\pi) =23\ldots n1$ and thus $\theta^3(\pi) = n123\ldots n-1$, or when $n$ is even and $\pi = (n,1,3,4,\ldots,n-1,2)$ and $\theta^2(\pi) = 234\ldots n21$, and so $\theta^3(\pi) = (n,1,3,5,\ldots,2,4,6,\ldots )$ which does contain a 213 pattern. Therefore, for $n\geq 5$, there are only 2 permutations in $\mathcal{T}_n^3(213)$ that don't have 1 as a fixed point. Together with the fact that $t_4^3(213)=9$, the first result follows for $n\geq 4.$

    Now, of these two permutations, only the permutation $\pi = (n,2,3\ldots,n-1,1)$ has the property that $\theta^4(\pi) = n(n-1)\ldots 321$ also avoids 213. However $\theta^5(\pi)$ does not avoid 213. These two facts imply that for $n\geq 5$, we have that $t_n^4(213)=t_{n-1}^4(213)+1$ and that $t_n^k(213)= t_{n-1}^k(213)+0$ for $k\geq 5$, which together with the initial conditions imply the other two results.
\end{proof}

Before considering other patterns $\sigma\in\S_3$, let us consider the lemma below regarding the permutations fixed under $\theta$. This is a well-known result, but we provide a proof here for completeness. 
\begin{lemma}\label{lem:fixed}
    For $n\geq 2$, there are $F_{n+1}$ permutations fixed by the fundamental bijection. These permutations are exactly involutions where every 2-cycle is composed of a consecutive pair of elements.    
\end{lemma}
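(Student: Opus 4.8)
The plan is to prove both the characterization and the count, treating the characterization as the main work and deducing the count from it. For the easy direction, I would first record that the two building blocks are fixed by $\theta$: we have $\theta(1)=1$ trivially, and $\theta(21)=21$ since $21$ has standard cycle form $(2,1)$ and removing the parentheses returns $21$. Then, writing any candidate permutation as a direct sum $\bigoplus_j \beta_j$ with each $\beta_j\in\{1,21\}$, the corollary to Lemma~\ref{lem: thetaCommutes} (that $\theta$ commutes with direct sums) gives $\theta(\bigoplus_j\beta_j)=\bigoplus_j\theta(\beta_j)=\bigoplus_j\beta_j$, so every such involution is indeed fixed. This also makes the enumeration immediate at the end, since these permutations are in bijection with compositions of $n$ into parts equal to $1$ and $2$, of which there are $F_{n+1}$.

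For the converse—the heart of the argument—I would induct on $n$ and show that if $\theta(\pi)=\pi$ then $n$ is either a fixed point or forms the consecutive $2$-cycle $(n,n-1)$ sitting at the very end. The key point is that in standard cycle form the cycle maxima increase, so the last cycle has maximum $n$ and occupies the final block of $\theta(\pi)$. If that cycle is $(n)$, then $\pi_n=n$. Otherwise write it as $(n,a_1,\ldots,a_\ell)$; its last entry $a_\ell$ equals $\theta(\pi)_n=\pi_n$, so $\pi(n)=a_\ell$, while the cycle itself gives $\pi(n)=a_1$, forcing $\ell=1$. Thus the last cycle is a $2$-cycle $(n,a)$ contributing $n\,a$ to the final two positions, so $\pi_{n-1}=n$ and $\pi_n=a$; comparing $\pi(a)=n=\pi(n-1)$ and using injectivity of $\pi$ gives $a=n-1$. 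Hence the last cycle is exactly $(n,n-1)$ and positions $n-1,n$ hold the values $n,n-1$.

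In either case $\pi$ splits as a direct sum, namely $\pi=\pi'\oplus 1$ in the fixed-point case or $\pi=\pi'\oplus 21$ in the $2$-cycle case, where $\pi'\in\S_{n-1}$ or $\pi'\in\S_{n-2}$ acts on the remaining values. By Lemma~\ref{lem: thetaCommutes} together with $\theta(1)=1$ and $\theta(21)=21$, we get $\theta(\pi)=\theta(\pi')\oplus\beta$, and comparing with $\theta(\pi)=\pi$ forces $\theta(\pi')=\pi'$. The inductive hypothesis then makes $\pi'$ a direct sum of $1$'s and $21$'s, and reattaching the peeled-off block finishes the induction; the base cases $n=1,2$ are checked directly. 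The main obstacle is the structural step of pinning down the last cycle: one has to argue carefully with the distinction between positions and values in $\theta(\pi)=\pi$ in order to rule out longer final cycles and to conclude $a=n-1$ rather than merely that $\{n,a\}$ is some $2$-cycle. Once that step is in place, the direct-sum peeling and the Fibonacci count are routine.
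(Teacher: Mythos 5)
Your proposal is correct and follows essentially the same route as the paper: peel off the last cycle of the standard cycle form, show it must be either the fixed point $(n)$ or the $2$-cycle $(n,n-1)$ by comparing $\theta(\pi)$ with $\pi$ positionwise, and recurse, yielding the Fibonacci count (the paper phrases this as the recurrence $s_n=s_{n-1}+s_{n-2}$, you as compositions into parts $1$ and $2$, which is the same thing). The only cosmetic difference is that you derive $\ell=1$ from $a_1=a_\ell$ rather than from the pair $\pi_j=n$, $\pi_n=j$; both are fine.
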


\begin{proof}
Let $s_n$ denote the number of permutations $\pi\in\S_n$ so that $\theta(\pi)=\pi$.
    Since it is easily checked $s_1=1$ and $s_2=2$, it is enough to check that $s_n=s_{n-1}+s_{n-2}.$ Given any such permutation, if $n$ is a fixed point, then both $\pi$ and $\theta(\pi)$ end with $n$. After removing that 1-cycle, you end up with a permutation in $\S_{n-1}$ fixed under $\theta$. If $n$ is not a fixed point, then the last cycle of $\pi$ is of the form $(n,\ldots, j)$ for some $j$ and so $\pi_j=n$. Since $\pi=\theta(\pi)$, it must also be the case that $\pi_n=j$, and thus $n$ is part of a 2-cycle $(n,j).$ But then $n$ must appear in the position $n-1$ of $\pi$, and so we must have $j=n-1$. Removing this 2-cycle, we obtain a permutation in $\S_{n-2}$ fixed under $\theta.$ The form of the permutations clearly results from this recursive process.
\end{proof}

The permutations described above clearly avoid the patterns 231, 312, and 321. In the theorem below, we find that these are actually the only permutations in $\mathcal{T}_n^k(\sigma)$ for any $\sigma\in \{231,312,321\}$ for $k\geq 2.$

\begin{theorem} \label{thm:kPowerFib}
    For $n\geq1$, $k\geq 2$, and $\sigma\in\{231,312,321\}$, $t_n^k(\sigma) =F_{n+1}$.
\end{theorem}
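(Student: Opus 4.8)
The plan is to show that $\mathcal{T}^2_n(\sigma)$ already coincides with the set of permutations fixed by $\theta$, which by Lemma~\ref{lem:fixed} has cardinality $F_{n+1}$. Since $\mathcal{T}^{k+1}_n(\sigma)\subseteq\mathcal{T}^k_n(\sigma)$, and since every $\theta$-fixed permutation avoids each $\sigma\in\{231,312,321\}$ and hence lies in $\mathcal{T}^k_n(\sigma)$ for all $k$, this sandwiches $t^k_n(\sigma)=F_{n+1}$ for every $k\ge 2$. Thus it suffices to prove the single bound $t^2_n(\sigma)\le F_{n+1}$.

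First I would reduce to irreducible permutations. For each $\sigma\in\{231,312,321\}$ the extreme value of $\sigma$ occupies an end position, so in a direct sum $\alpha\oplus\beta$ any occurrence of $\sigma$ must lie entirely within one summand; hence $\alpha\oplus\beta$ avoids $\sigma$ if and only if both $\alpha$ and $\beta$ do. Combining this with Lemma~\ref{lem: thetaCommutes} (so that $\theta^j$ also commutes with $\oplus$) and the fact, shown above, that $\theta$ preserves irreducibility, a permutation $\pi=\bigoplus_i\tau_i$ lies in $\mathcal{T}^2_n(\sigma)$ if and only if each irreducible block $\tau_i$ lies in $\mathcal{T}^2(\sigma)$. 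Writing $a(x)$ for the generating function counting irreducible members of $\mathcal{T}^2(\sigma)$ by size, the full generating function is then $1/(1-a(x))$. The two $\theta$-fixed irreducibles $1$ and $21$ contribute $x+x^2$, and $1/(1-x-x^2)=\sum_{n\ge0}F_{n+1}x^n$, so the whole theorem reduces to the claim that \emph{for $m\ge 3$ there is no irreducible permutation in $\mathcal{T}^2_m(\sigma)$}.

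For $\sigma=231$ and $\sigma=312$ this is quick. By Theorem~\ref{thm:231} the only irreducible member of $\mathcal{T}_m(231)$ is the decreasing cycle $c_m=m12\cdots(m-1)$, and one computes $\theta(c_m)=\delta_m$, so that $\theta^2(c_m)=\theta(\delta_m)$; reading off the standard cycle form of $\delta_m$ recorded in the proof of Theorem~\ref{thm:312}, the leading entries of $\theta(\delta_m)$ form a $231$ pattern once $m\ge 3$. By Theorem~\ref{thm:312} the only irreducible member of $\mathcal{T}_m(312)$ is $\delta_m$; here the key observation is that the final $2$-cycle $(m,1)$ forces $\theta(\delta_m)$ to end in $m,1$, so the cycle of $\theta(\delta_m)$ containing $m$ is $(m,1,\rho_1,\dots)$ with $\rho_1=(\theta(\delta_m))_1$ satisfying $2\le\rho_1\le m-1$; consequently $\theta^2(\delta_m)$ contains the consecutive triple $m,1,\rho_1$, a $312$ pattern, whenever $m\ge 3$. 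In both cases only the size-$1$ and size-$2$ blocks survive.

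The main obstacle is $\sigma=321$, where $\mathcal{T}_m(321)$ has many irreducible members. The useful reformulation is that if $\theta(\rho)$ avoids $321$, then every cycle of $\rho$, written in standard form $(c,x_1,\dots,x_\ell)$, has $x_1<\cdots<x_\ell$ (otherwise $c>x_i>x_j$ gives a $321$ in $\theta(\rho)$); call this \emph{property $P$}. Applying this to $\theta(\tau)$ shows that $\tau\in\mathcal{T}^2_m(321)$ has property $P$, and applying it to $\theta^2(\tau)=\theta(\hat\tau)$ shows $\hat\tau=\theta(\tau)$ has property $P$; here property $P$ means exactly that $\rho$ acts as the cyclic successor map on each block of a set partition of $[m]$. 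The crux is then to prove that an irreducible property-$P$ permutation $\tau$ of size $m\ge 3$ has $\hat\tau$ failing property $P$ (equivalently $\theta^2(\tau)$ contains $321$). I expect to argue this by tracking the cycle of $\hat\tau$ containing the value $1$: irreducibility forces $1$ to be non-fixed, the block structure of $\tau$ places $1$ immediately after $\max B$ in the concatenation $\hat\tau=\theta(\tau)$, and from this one shows that any block of size $\ge 3$ or any nonconsecutive pair produces a descent in the non-maximal part of a cycle of $\hat\tau$. Verifying that this local analysis uniformly covers all partition shapes—and in particular disposes of the genuinely irreducible examples $2341$ (whose image $4123$ has cycle $(4,3,2,1)$) and $3412$ (whose image $3142$ has cycle $(4,2,1,3)$)—is the step I expect to demand the most care.
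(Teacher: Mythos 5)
Your overall architecture is exactly the paper's: sandwich $\mathcal{T}^k_n(\sigma)$ between the $F_{n+1}$ permutations fixed by $\theta$ (Lemma~\ref{lem:fixed}) and $\mathcal{T}^2_n(\sigma)$, reduce via Lemma~\ref{lem: thetaCommutes} to irreducible blocks, and show that no irreducible permutation of size $m\geq 3$ survives. Your treatments of $231$ and $312$ are correct, and in fact somewhat more explicit than the paper's (the paper argues that $\theta(\pi)$ must again be a direct sum of the unique irreducible blocks of the class, forcing all blocks to have size $1$ or $2$; you instead exhibit a forbidden pattern directly in $\theta^2$ of the unique large irreducible block).

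The gap is the $321$ case, which you correctly identify as the crux but do not actually prove: everything after ``I expect to argue this by tracking the cycle of $\hat\tau$ containing the value $1$'' is a plan rather than an argument, and you concede as much. This is precisely where the paper does real work. Its argument, which your setup is compatible with, runs as follows: for an irreducible $\tau\in\mathcal{T}^2_m(321)$ with $m\geq 3$, the element $1$ cannot be a fixed point or lie in the cycle $(2,1)$ (irreducibility), and $1$ must lie in the \emph{first} cycle of the standard form of $\tau$ --- otherwise all preceding cycles are fixed points, and irreducibility supplies an element exceeding such a fixed point $r$ to its left, which together with the later occurrence of $1$ gives a $321$ in $\tau$. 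Avoidance of $321$ in $\hat\tau=\theta(\tau)$ then forces that first cycle to be $(k,1,2,\ldots,j)$ with $j<k$ and $k\geq 3$. If $j\geq 2$, then $\hat\tau_2=1$ and $\hat\tau_3=2$ place $3,2,1$ consecutively within a cycle of $\hat\tau$, so $\theta^2(\tau)$ contains $321$; if $j=1$, the value $2$ sits in some position $m>2$ of $\hat\tau$, producing $m,2,1$ within a cycle of $\hat\tau$ and again a $321$ in $\theta^2(\tau)$. Without some such completed argument, your proof does not establish the upper bound $t_n^2(321)\leq F_{n+1}$, and the theorem is unproven for $\sigma=321$.
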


\begin{proof}
    Since the elements described by Lemma~\ref{lem:fixed} are fixed by $\theta^k$ for any $k\geq 1$ and avoid the patterns 231, 312, and 321, it is clear that $\mathcal{T}^k_n(\sigma)$ must contain these permutations for each $\sigma \in \{231,312,321\}$, and thus $t_n^k(\sigma)\geq F_{n+1}.$ It remains to show that these are indeed all the permutations in $\mathcal{T}_n^k(\sigma)$. If we show this is the case for $k=2$, it must also follow for all $k\geq 3$ since by definition, $\mathcal{T}_n^i(\sigma)\subseteq\mathcal{T}_n^j$ for any $i\geq j$.
    
    This theorem is straightforward for the cases when $\sigma=231$ or 312. Indeed, by Theorem~\ref{thm:231}, we know that for any $\pi \in \mathcal{T}^2_n\subseteq\mathcal{T}_n$, we must have that $\pi$ is block cyclic of the form $\bigoplus_k \epsilon_{d_k}$ where $\epsilon_{d}=d12\ldots(d-1)=(d,d-1,\ldots,2,1)$. Since for any $\pi\in\mathcal{T}^2_n$, we must have that $\theta(\pi)$ is also of this form, it must be the case that $\pi$ is composed only of 1-cycles and 2-cycles. Since it is block cyclic, the theorem follows. Similarly for $\sigma=312$, from Theorem~\ref{thm:312}, we know the permutations must be of the form $\bigoplus_k \delta_{d_k}$ where $\delta_{d}=d(d-1)\ldots 21=\cdots(d-1,2)(d,1)$, and since $\theta(\pi)$ must also be of this form, the blocks must be size 1 or 2. 

    The final case to consider is when $\sigma=321$. In Theorem \ref{thm: 321} we argue that every permutation $\pi \in \mathcal{T}_n(321)$, and therefore in $\mathcal{T}^2_n(321)$, is of the form $\bigoplus_{j=1}^m \tau_{d_j}$, where $\tau_d$ is an irreducible permutation in $\mathcal{T}_d(321)$. If $\pi \in \mathcal{T}^2_n(321)$, this also must be true of $\theta(\pi)$, so it suffices to prove that the number of irreducible $\tau \in \mathcal{T}^2_n(321)$ is zero when $n \geq 3$.

    % First, we notice that $(1) \in \mathcal{T}_1^2(321)$ and $(12) \in \mathcal{T}_2^2(321)$ and both are irreducible.  Now suppose $\tau \in \mathcal{T}_n^2(321)$ is irreducible with $n \geq 3$. By Lemma \ref{lem: irredEnd} we must have $\theta(\tau)$ ending in $n-1$ or $ni$ for $\lceil n/2 \rceil \leq i \leq n-2$. However, $\theta(\tau)$ cannot end in $n-1$ because this means $\theta(\tau)$ contains the cycle $(n(n-1)i_1 \cdots i_j)$. In order for $\theta^2(\tau)$ to avoid $321$ this cycle must just be $(n(n-1))$, but then $\tau$ also contains $(n(n-1))$ and so is not irreducible.

    Suppose $\tau \in \mathcal{T}_n^2(321)$ is irreducible with $n \geq 3$, and consider the cycle in $\tau$ that contains $1$.
    Notice that $1$ cannot be fixed, or be in the transposition $(2,1)$ in $\tau$ since $\tau$ is irreducible and $n \geq 3$. 
    Furthermore, we claim that 1 must appear in the first cycle in the standard cycle form of $\tau$. If not, then since $\theta(\tau)$ avoids 321, all elements before 1 in $\theta(\tau)$ are increasing, and so it must be the case that all cycles preceding the element containing 1 in $\tau$ are fixed points. But this implies there is some $1<r<s$ with $\tau_r=r$ and $\tau_s=1$. Since $\tau$ is irreducible, there is an element preceding $r$ that is greater than $r$, which gives us an 321 pattern in the one-line notation of $\tau.$ 

    Thus $1$ is in the first cycle of $\tau$, which must be of the form $(k,1,2,\ldots, j)$ for some $j<k$ and $k\geq 3.$ If $j\geq 2$, then since $\theta^2(\tau)$ contains the subsequence 321 since in $\hat\tau=\theta(\tau)$, $\hat\tau_3=2$ and $\hat\tau_2=1.$
    Similarly, if $j=1$, then 2 must appear in a position $m$ greater than 2, so $m21$ is a subsequence of $\theta^2(\tau)$. Therefore, there are no irreducible permutations in $\mathcal{T}_n^2(321)$ with $n\geq 3.$
\end{proof}

%\todo{not sure if this conjecture below will continue to work -- I've only checked up to  n=12.}

Since by Theorem~\ref{thm:123}, the number of permutations avoiding 123 whose image under $\theta$ also avoids 123 is eventually zero, we must have $t_n^k(123)=0$ for large enough $n.$ Therefore the only remaining case to consider is 132. This case seems pretty complicated, and we leave the enumeration of $\mathcal{T}^k(132)$ for $k\geq 2$ as a conjecture.
\begin{conjecture}
    For $n\geq2$,
    \[
    t_n^2(132) = \begin{cases}
        k^3 + 3 k^2 + 2 k - 1 & n=3k \\ 
        k^3 + 4 k^2 + 4k  & n=3k+1\\
        k^3 + 5 k^2 + 7 k + 2 & n=3k+2
    \end{cases}
    \]
    The conjectured values for  $k\geq 3$ are found in the table below.
    \begin{center}
    \begin{tabular}{c|c}
    $k$ & $t_n^k(132)$ \\ \hline
           $3$ & $3n-4$  \\
           $4$ & $2n-1$ \\
           $5$ & $n+2$  \\ 
           $\geq6$ & $5$  \\ 
    \end{tabular}
    \end{center}
\end{conjecture}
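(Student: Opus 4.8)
The plan is to mirror the single-iteration argument of Theorem~\ref{thm:132}, but to carry the extra constraint that $\theta^2(\pi)$ also avoids $132$ through every case. The key organizing tool remains Lemma~\ref{lem:132-n}: if $\hat\pi = \theta(\pi)$ and $\hat\pi_i = n$, then $i \in \{1, n-1, n\}$, giving three cases. First I would dispose of the case $\hat\pi_n = n$, where $n$ is a fixed point of $\pi$. Here $\pi = \pi' \oplus 1$, so by Lemma~\ref{lem: thetaCommutes} we have $\theta^j(\pi) = \theta^j(\pi') \oplus 1$ for all $j$, and since the terminal $n$ cannot participate in a $132$ pattern, this case contributes exactly $t_{n-1}^2(132)$. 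I expect this to be the \emph{only} case that produces a genuine recursive term; the crux is that, unlike the single-iteration setting, the remaining two cases should collapse to rigidly structured families under the second avoidance constraint (if both gave full recursive terms the count would grow like Fibonacci, not polynomially).

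Next I would treat the non-fixed-point cases together, namely $\pi$ cyclic ($\hat\pi_1 = n$) and the $2$-cycle case $(n,1)$ ($\hat\pi_{n-1}=n$, forcing $\pi_1=n$, $\pi_n=1$). The obstacle here is that neither the full cycle nor the cycle $(n,1)$ is a direct-sum block, so Lemma~\ref{lem: thetaCommutes} does not apply and one cannot simply peel these cases off. Instead I would classify exactly which such $\pi$ have \emph{both} $\theta(\pi)$ and $\theta^2(\pi)$ avoiding $132$. I expect the requirement that $\hat\pi$ avoid $132$ to force the nontrivial cycle to be assembled from a bounded number of monotone runs (as in the proofs of Lemma~\ref{lem: 213-fixedIntervals} and Theorem~\ref{thm:213}), and the further requirement that $\theta^2(\pi)=\theta(\hat\pi)$ avoid $132$ to impose a congruence condition on the run lengths. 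The surviving families should then be parametrized by one or two interval-length parameters constrained modulo $3$, so that the total non-fixed-point count $g(n)$ is a quadratic function of $n$ with period-$3$ coefficients. Solving $t_n^2(132) = t_{n-1}^2(132) + g(n)$ by summation yields a cubic polynomial whose three residue classes modulo $3$ match the stated formulas, and matching small-$n$ initial conditions pins down the constants.

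For $k \geq 3$ I would iterate: each additional application of $\theta$ adds one $132$-avoidance constraint, which I expect to lower the polynomial degree of the non-fixed-point contribution by one at each step, giving $t_n^k(132) = t_{n-1}^k(132) + g_k(n)$ with $\deg g_k$ decreasing. This produces the linear values $3n-4$, $2n-1$, $n+2$ for $k=3,4,5$ and a constant for $k \geq 6$. The final step is a \emph{saturation} argument: since $\theta$ is a bijection of the finite set $\S_n$, every orbit is a cycle, so for $k$ beyond the longest relevant orbit the condition ``the first $k+1$ iterates avoid $132$'' becomes ``the entire $\theta$-orbit avoids $132$.'' I would show this stabilizes by $k=5$ and then identify the $5$ surviving permutations explicitly as the whole-orbit $132$-avoiders: these include the two $\theta$-fixed $132$-avoiders (the identity and $21 \oplus 1^{\,n-2}$, the only permutations of Lemma~\ref{lem:fixed} with no $21$-block preceded by a smaller element) together with the short-orbit survivors, whose analysis overlaps with the $\pi = \theta^2(\pi)$ classification of Section~\ref{sec: fixedOrder}.

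The main obstacle throughout is the reducibility of the pattern $132 = 1 \oplus 21$: because a $132$ pattern can straddle a direct-sum boundary (a single small element followed by any descent in a later block creates one), $132$-avoidance does \emph{not} decompose across $\oplus$, so the clean ``direct sum of irreducibles'' reduction that powered the $321$ case (Theorem~\ref{thm: 321}) is unavailable. Consequently the cyclic and $2$-cycle cases cannot be separated from the global one-line structure, and controlling how the monotone-run structure of a cycle transforms under a \emph{second} pass of $\theta$ -- together with the congruence conditions that generate the period-$3$ behavior -- is the genuinely delicate combinatorial core of the argument.
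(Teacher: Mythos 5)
The statement you are trying to prove is left as a \emph{conjecture} in the paper --- the authors explicitly write that the case $\sigma=132$ with $k\geq 2$ ``seems pretty complicated'' and do not prove it --- so there is no paper proof to compare against. Judged on its own terms, your proposal is a plausible roadmap but not a proof: every step that would constitute the actual mathematical content is deferred with ``I expect'' or ``should.'' The fixed-point case ($\hat\pi_n=n$ contributing $t_{n-1}^2(132)$) is correct and easy, but the entire difficulty lives in the two remaining cases of Lemma~\ref{lem:132-n} ($\pi$ cyclic, or $\pi_1=n,\ \pi_n=1$), where --- as you yourself observe --- Lemma~\ref{lem: thetaCommutes} gives no leverage. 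You assert that the double-avoidance constraint forces these permutations into families parametrized by interval lengths with a congruence condition modulo $3$ producing a quadratic count $g(n)$, but you never exhibit these families, never prove the classification is exhaustive, and never derive the mod-$3$ structure. That classification \emph{is} the theorem; without it the argument proves nothing beyond the trivial recursive inequality $t_n^2(132)\geq t_{n-1}^2(132)$.

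Two of your structural claims are also wrong as stated. First, the ``degree drops by one per iteration'' mechanism contradicts the very data you are trying to match: $t_n^2(132)$ is cubic, so $g_2(n)=t_n^2-t_{n-1}^2$ is quadratic, yet $t_n^3(132)=3n-4$ forces $g_3(n)\equiv 3$, a drop of two degrees at once; thereafter $g_4\equiv 2$, $g_5\equiv 1$, $g_6\equiv 0$, so what actually decreases is the (constant) number of non-fixed-point survivors, not a polynomial degree. Second, the saturation argument for $k\geq 6$ is not sound: the order of $\theta$ on $\S_n$ grows with $n$, so for a fixed $k$ you cannot say that surviving $k$ iterations is equivalent to having an entirely $132$-avoiding orbit; you would instead have to prove directly that the permutations surviving five iterations already have fully avoiding forward orbits, which again requires the explicit classification you have not supplied (you identify only two of the five claimed survivors). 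As it stands the proposal is a set of educated guesses consistent with the conjectured formulas, not an argument that establishes them.
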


\section{Avoidance for fixed-order permutations under $\theta$-action} \label{sec: fixedOrder}

Notice that $\Z$ acts on $\S_n$ by $k\star\pi = \theta^k(\pi)$ (since $\theta$ is a bijection, we let $\theta^{-1}$ be the inverse of $\theta$.) We will say that $\pi$ is fixed by $k$ if $k\star\pi=\pi.$ Let $\mathcal{F}_n^k(\sigma)$ be the set of permutations fixed by $k$ that avoid $\sigma$, let $f_n^k(\sigma) = |\mathcal{F}_n^k(\sigma)|$, and let the generating function for these values be denoted by $F^k_\sigma(x) = \sum_{n\geq 0} f_n^k(\sigma)x^n$. Surprisingly, determining these $f_n^k(\sigma)$ yields some interesting results.

Let us first consider when $k=1,$ in which case $\pi=\theta(\pi).$ Recall that by Lemma~\ref{lem:fixed}, permutations that are fixed under the fundamental bijection are composed of fixed points and transpositions of consecutive elements.

\begin{theorem}
    For $n\geq 5$, 
    \[f_n^1(\sigma) = \begin{cases}
        F_{n+1} & \sigma\in\{231,312,321\},\\
        2 & \sigma\in\{132,213\},\\
        0 & \sigma\in\{123\}.
    \end{cases}\]
\end{theorem}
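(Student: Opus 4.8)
The statement to prove is the value of $f_n^1(\sigma)$, the number of permutations fixed by $\theta$ that also avoid $\sigma$, for $n\geq 5$. The plan is to exploit Lemma~\ref{lem:fixed}, which tells us that the permutations fixed by $\theta$ are precisely the involutions whose every $2$-cycle consists of a consecutive pair $(j+1,j)$; equivalently, they are exactly the block direct sums $\bigoplus_k \epsilon_{d_k}$ with each $d_k\in\{1,2\}$, where $\epsilon_1=1$ and $\epsilon_2=21$. I would fix a permutation $\pi$ of this special form and simply ask, for each of the six patterns $\sigma\in\S_3$, which such $\pi$ avoid $\sigma$. Because these $\theta$-fixed permutations have such rigid structure, this reduces to an elementary case check rather than a generating-function argument.

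\textbf{The three patterns $231,312,321$.} These are the easy cases. Every $\theta$-fixed permutation is a direct sum of blocks of the form $1$ or $21$, and any such direct sum automatically avoids each of $231$, $312$, and $321$: a $231$ or $312$ pattern would require an ascent spanning two blocks with a later element interleaving, which the block structure forbids, and a $321$ pattern would require a decreasing subsequence of length three, impossible when every decreasing run has length at most $2$. Hence for $\sigma\in\{231,312,321\}$ \emph{every} $\theta$-fixed permutation avoids $\sigma$, so $f_n^1(\sigma)$ equals the total count of $\theta$-fixed permutations, which is $F_{n+1}$ by Lemma~\ref{lem:fixed}. This requires no restriction beyond $n\geq 1$.

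\textbf{The patterns $132,213$ and $123$.} Here the block structure becomes restrictive. If $\pi=\bigoplus_k\epsilon_{d_k}$ contains two or more blocks, say block $B_1$ followed by block $B_2$, then picking a smallest element of $B_1$ and two elements of $B_2$ produces a $123$ pattern (if $B_2$ is a $1$-block or the ascent within a $2$-block is used appropriately) — more precisely, any element of an earlier block together with the two entries of a later $21$-block, or with two entries drawn across later blocks, realizes a $123$; while a $21$-block sitting before any later element realizes both a $132$ and a $213$ depending on relative positions. The cleanest route is: I would show that a $\theta$-fixed $\pi$ avoids $132$ (respectively $213$) only when $\pi$ has at most one nontrivial structural feature, forcing $\pi$ to be either the identity $12\cdots n$ or a single adjacent transposition at one fixed end, giving exactly $2$ survivors once $n$ is large enough to rule out small coincidences (hence the hypothesis $n\geq 5$). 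For $\sigma=123$, I would observe that any $\theta$-fixed $\pi$ with $n\geq 5$ has at least three blocks or a block plus fixed points that together force an increasing subsequence of length $3$, so no $\theta$-fixed permutation avoids $123$; thus $f_n^1(123)=0$.

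\textbf{Main obstacle.} The only delicate part is pinning down \emph{exactly} which $\theta$-fixed permutations avoid $132$ and $213$ and verifying the count is precisely $2$ rather than a small $n$-dependent number; this is where the hypothesis $n\geq 5$ is needed, to eliminate the low-order exceptions (for small $n$ there are a few extra short permutations that happen to avoid these patterns). I expect the bookkeeping to come down to: the identity always works, one specific single adjacent transposition works, and everything else with a transposition combined with any other nontrivial block or with enough fixed points on both sides creates the forbidden pattern. Carefully enumerating the borderline configurations (one transposition together with fixed points on only one side versus both sides) is the step most prone to off-by-one errors, so I would verify the boundary cases $n=3,4$ separately against the claimed formula before asserting the $n\geq 5$ statement.
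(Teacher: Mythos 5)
Your proposal is correct and follows essentially the same route as the paper: invoke Lemma~\ref{lem:fixed} to restrict to direct sums of blocks $1$ and $21$, observe that all of these avoid $231$, $312$, $321$ (giving $F_{n+1}$), that only the identity and one adjacent transposition at the appropriate end survive for $132$ and $213$ (giving $2$), and that three or more blocks force a $123$ (giving $0$ for $n\geq 5$). The only difference is that the paper explicitly names the two survivors in each case ($12\cdots n$ and $2134\cdots n$ for $132$; $12\cdots n$ and $12\cdots(n-2)n(n-1)$ for $213$) and exhibits the pattern $1(i+1)i$ created by any other transposition, the bookkeeping you deferred.
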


\begin{proof}
    The permutations described in Lemma~\ref{lem:fixed} clearly avoid 231, 312, and 321, and so there are $F_{n+1}$ permutations fixed by $\theta$ and avoiding one of these patterns.
    In order to avoid the pattern 132, a permutation fixed by $\theta$ must be of the form $1234\ldots n$ or $2134\ldots n$. If a permutation $\pi$ of the correct form had any other transposition $(i+1,i)$ for $i>2$, then $1(i+1)i$ would be a 132 pattern in $\pi$. Similarly, any permutation fixed by $\theta$ that avoids 213 must be of the form $12\ldots n$ or $12\ldots (n-2)n(n-1)$. Finally, any permutation fixed by $\theta$ with at least five elements must have at least 3 cycles, and taking any single element from each cycle will result in a 123 pattern.
\end{proof}

We next consider the permutations that avoid some $\sigma \in S_3$ that are fixed by 2 under this group action. Though understanding permutations fixed by 2 in general appears to be a more difficult question (see Section~\ref{sec: Conclusion}), when restricting to pattern-avoiding permutations, the answers turn out to be quite nice.  Note that for any permutation $\pi$ with $\pi=\theta^2(\pi)$, the standard cycle form of $\hat\pi=\theta(\pi)$ must appear in the same order as $\pi$ itself since $\theta(\hat\pi)=\pi;$ in other words, $\hat\pi=\theta(\pi)=\theta^{-1}(\pi).$

\begin{theorem}
    For $\sigma\in\{231,312\},$ we have 
  %  \[F_{\sigma}^1(x) = \frac{1}{1 - x - x^2},\]
    \[F_{\sigma}^2(x) = \frac{1}{1 - x - x^2 - x^4},\]
%\[F_{\sigma}^3(x) = \frac{1}{1 - x - x^2 - 2x^3},\]
%\[F_{\sigma}^4(x) = \frac{1}{1 - x - x^2 - 2x^4-x^5-x^6},\]
%\[F_{\sigma}^5(x) = \frac{1}{1 - x - x^2},\]
%\[F_{\sigma}^6(x) = \frac{1}{1 - x - x^2-2x^3-x^4},\]
%\[F_{\sigma}^7(x) = \frac{1}{1 - x - x^2-2x^4},\]
%\[F_{\sigma}^8(x) = \frac{1}{1 - x - x^2-2x^4-x^5-x^6},\]
%\[F_{\sigma}^9(x) = \frac{1}{1 - x - x^2-2x^3-x^6},\]
%\[F_{\sigma}^{10}(x) = \frac{1}{1 - x - x^2-2x^4-x^6-2x^8},\]
%\[F_{\sigma}^{11}(x) = \frac{1}{1 - x - x^2}.\]
\end{theorem}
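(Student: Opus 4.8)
The plan is to collapse the count to irreducible permutations and then classify those completely. Both $231$ and $312$ are sum-closed patterns: a direct sum $\alpha\oplus\beta$ avoids the pattern if and only if each of $\alpha,\beta$ does (a potential occurrence cannot straddle the two blocks). Since $\theta$ commutes with $\oplus$ (Lemma~\ref{lem: thetaCommutes}) and preserves irreducibility, the condition $\theta^2(\pi)=\pi$ can be read off the irreducible decomposition: writing $\pi=\bigoplus_i \tau_i$ with each $\tau_i$ irreducible, we have $\theta^2(\pi)=\bigoplus_i \theta^2(\tau_i)$, and by uniqueness of the decomposition this equals $\pi$ exactly when $\theta^2(\tau_i)=\tau_i$ for every $i$. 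Hence $\mathcal{F}_n^2(\sigma)$ is in bijection with sequences of irreducible $\theta^2$-fixed $\sigma$-avoiders, so if $A_\sigma(x)$ denotes the generating function for the irreducible members of $\mathcal{F}^2(\sigma)$, then $F_\sigma^2(x)=\tfrac{1}{1-A_\sigma(x)}$. It therefore suffices to prove $A_\sigma(x)=x+x^2+x^4$.

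The small sizes are checked by hand. In size $1$ and $2$ the only irreducible $\sigma$-avoiders are $1$ and $21$, both fixed by $\theta$ and hence by $\theta^2$. In size $3$ the relevant irreducibles all lie in the single $3$-element $\theta$-orbit $\{231,312,321\}$, on which $\theta^2$ has no fixed point, so there are none. In size $4$ a direct computation shows there is exactly one irreducible $\theta^2$-fixed example for each pattern, namely $4132$ for $\sigma=231$ and $3421$ for $\sigma=312$; these two in fact form the single $\theta$-orbit $\{4132,3421\}$. This yields $A_\sigma(x)=x+x^2+x^4$ and the claimed $F_\sigma^2(x)=\tfrac{1}{1-x-x^2-x^4}$, \emph{provided} no irreducible examples exist for $n\geq 5$.

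To handle $n\ge 5$ I would first pin down the rigid shape of a hypothetical irreducible example, and here the two patterns behave dually. For $\sigma=231$, irreducibility together with $231$-avoidance forces $\pi_1=n$ (otherwise the elements before $n$ form a proper initial block). Then $\theta^2(\pi)=\pi$ gives $\theta(\hat\pi)=\pi$, so the first entry of $\theta(\hat\pi)$ is $n$; since the first entry of $\theta(\hat\pi)$ is the smallest cycle-maximum of $\hat\pi$ and $n$ is the global maximum, $\hat\pi=\theta(\pi)$ must be a single cycle. This determines the cycle form of $\hat\pi$ to be $(n,\pi_2,\dots,\pi_n)$, and the surviving equation $\theta(\pi)=\hat\pi$ becomes a consistency condition on the word $\pi_2\cdots\pi_n$; tracing it out already forces $\pi_2=1$ and that the value $n-1$ is immediately followed by $2$ in $\pi$. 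For $\sigma=312$ the dual analysis applies: irreducibility plus $312$-avoidance forces $\pi_n=1$, whence $\theta^2(\pi)=\pi$ gives $\hat\pi_1=n$, and the same smallest-cycle-maximum argument now forces $\pi$ itself to be a single cycle of the form $(n,1,\dots)$.

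The main obstacle is to show these consistency conditions have no solution once $n\ge 5$. I expect to finish by propagating the forced entries described above and exhibiting a forbidden $231$ (respectively $312$) pattern, or equivalently by a minimal-counterexample argument: peeling off the forced $2$-cycle or tail produces a smaller irreducible $\theta^2$-fixed $\sigma$-avoider, and since none exist of size $3$ this collapses everything above size $4$. I would verify this directly for $\sigma=231$ and then carry out the mirror-image computation for $\sigma=312$. One is tempted to instead deduce the $312$ count from the $231$ count using that $\theta$ is an involution on the set of $\theta^2$-fixed permutations, which would give a bijection $\mathcal{F}_n^2(231)\to\mathcal{F}_n^2(312)$; however, this requires the equivalence ``$\pi$ avoids $231$ if and only if $\theta(\pi)$ avoids $312$'' for $\theta^2$-fixed $\pi$, which appears no easier than the classification itself, so I favor running the two rigid-shape arguments in parallel.
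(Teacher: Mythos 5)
Your overall strategy coincides with the paper's: both reduce to the irreducible (equivalently, for these patterns, ``ends in its minimum'' / ``begins with its maximum'') blocks via Lemma~\ref{lem: thetaCommutes}, identify the admissible block sizes as $1,2,4$ with exactly one block of each size per pattern, and deduce the generating function $\frac{1}{1-x-x^2-x^4}$. Your setup for $n\geq 5$ also matches the paper's: you correctly force $\pi_1=n$ (resp.\ $\pi_n=1$), deduce from $\theta(\hat\pi)=\pi$ that the relevant permutation is a single cycle, and extract the forced entries $\pi_2=1$ and ``$n-1$ is immediately followed by $2$.''

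The gap is that the decisive step --- showing these constraints are unsatisfiable for $n\geq 5$ --- is never carried out; you write ``I expect to finish by propagating the forced entries\dots'' and ``I would verify this directly,'' which is a plan, not a proof. This is precisely where the paper does its real work: it locates the position $j$ of the entry $2$ in $\hat\tau$ and runs a case analysis (for $j<n-1$ one of $j\,2\,(j-1)$, $n\,(j-1)\,j$, or $(n-1)(j-1)j$ is a forbidden subsequence of $\tau$; the cases $j=n-1$ and $j=n$ each lead to their own contradictions), and none of that casework is automatic. Moreover, your proposed fallback --- a minimal-counterexample argument ``peeling off the forced $2$-cycle or tail'' to produce a smaller irreducible $\theta^2$-fixed avoider --- does not obviously apply: the hypothetical permutations you have constrained are single $n$-cycles, so there is no $2$-cycle to remove, and deleting entries from an $n$-cycle does not in general preserve the property $\theta^2(\tau)=\tau$. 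So while everything you have written is consistent with the truth and with the paper's argument, the proof is incomplete at its essential point.
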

%\todo{I put the proof for 312, k=2. I also put my guesses for the other answers some of which should be left as conjectures?}

\begin{proof}
    First recall that by Lemma~\ref{lem: thetaCommutes}, we know that $\theta(\tau\oplus\rho) = \theta(\tau)\oplus\theta(\rho).$ Since $\pi$ avoids 312, $\pi = \tau\oplus\rho$ where both $\tau$ and $\rho$ avoid 312 and $\tau$ ends in 1. Therefore, it will be enough to determine which permutation that lie in $\mathcal{F}_n^2(312)$ also end with 1.

    Suppose $\tau$ is such a permutation and $\hat\tau=\theta(\tau)$. It is easy to check that there is one such permutation for $n\in\{1,2,4\}$ and none for $n=3$. Let us assume $n>4$ and show there are no such permutations in $\mathcal{F}_n^3(312)$ that end in 1. First, note that if $\tau=\tau_1\ldots\tau_{n-1}1$, since $\theta(\hat\tau)=\tau,$ we must have that $\theta(\hat\tau)$ ends in 1 and thus  $\hat\tau_1=n$ (implying $\tau$ is cyclic) and $\hat\tau_2=1$. 
    
    Notice also that since $\hat\tau_2=1$ and $\tau_n=1$, we must have that $\tau_{n-1} = 2$, which in turn, implies that $2$ follows $n-1$ in $\hat\tau$. First suppose that 2 is in position $j<n-1$ in $\hat\tau$. Now, since $\tau=\theta(\hat\tau)$, we must have that one of $j2(j-1)$, $n(j-1)j$, or $(n-1)(j-1)j$ is a subsequence of $\tau$, all of which are 312 patterns. Therefore, $j\geq n-1$. If $j=n-1,$ we must have $n(n-2)(n-1)$ is a subsequence of $\tau$, and so we must have $2$ in the $n$-th position in $\hat\tau$. However, this implies that $\hat\tau=n1\ldots (n-1)2$, and so $\theta(\hat\tau)=\ldots(n-1) n2\ldots 1$, but also, $\tau=\tau_1n\ldots 21$, which is impossible if $n>4$.

    Finally, since for any $\pi = \tau\oplus\rho\in\mathcal{F}^2_n(312)$ with $\tau$ ending in 1, we must have that $\tau$ is size 1, 2, or 4. It follows that $f_n^2(312)= f_{n-1}^2(312)+f_{n-2}^2(312)+f_{n-4}^2(312)$, and the result follows. 

    An almost identical argument works for $231$ except in this case since $\pi$ avoids $231$, $\pi = \tau \oplus \rho$ where both $\tau$ and $\rho$ avoid $231$ and $\rho \in \S_m$ starts with its largest element, $m$. So it is enough to determine which permutations that lie in $\mathcal{F}_m^2(231)$ also begin with $m$. One can similarly argue that there is only one such permutation for $m \in \{1,2,4\}$.
\end{proof}

%\todo{maybe there is some symmetry for these numbers that there wasn't before? For $\pi$ and $\pi^{-1}$?}

Now let us see that permutations avoiding 321 that are fixed by $\theta^2$ are exactly those that are fixed by $\theta$ itself.

%\todo{There is something so weird about this. It looks like for $k\in\{1, 5, 11, 13\}$, you get the same answers?? Same with $k\in\{4,8\}$. So strange!}

\begin{theorem}
    For $n \geq 2$, $f_n^2(321) = F_{n+1}$. 
\end{theorem}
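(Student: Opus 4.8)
The plan is to show that \emph{every} $321$-avoiding permutation fixed by $\theta^2$ is in fact fixed by $\theta$, i.e. that $\mathcal F_n^2(321)=\mathcal F_n^1(321)$. The preceding theorem computing $f_n^1$ gives $f_n^1(321)=F_{n+1}$ (these are the $\theta$-fixed permutations of Lemma~\ref{lem:fixed}, all of which avoid $321$), and since $\theta(\pi)=\pi$ trivially forces $\theta^2(\pi)=\pi$ we have $\mathcal F_n^1(321)\subseteq\mathcal F_n^2(321)$; so it suffices to prove the reverse inclusion.

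First I would reduce to irreducible permutations. If $\pi$ avoids $321$ and $\theta^2(\pi)=\pi$, write $\pi=\bigoplus_i\tau_i$ with each $\tau_i$ irreducible. By Lemma~\ref{lem: thetaCommutes}, $\theta^2(\pi)=\bigoplus_i\theta^2(\tau_i)$, and since $\theta$ preserves size and (by the Proposition) irreducibility and order, this is the irreducible decomposition of $\theta^2(\pi)$; uniqueness of that decomposition, together with $\theta^2(\pi)=\pi$, forces $\theta^2(\tau_i)=\tau_i$ for each $i$, and each $\tau_i$ also avoids $321$. Thus it is enough to show that an irreducible $321$-avoiding $\tau$ with $\theta^2(\tau)=\tau$ has size at most $2$: the only such blocks are $1$ and $21$, both fixed by $\theta$, whence $\theta(\pi)=\bigoplus_i\theta(\tau_i)=\pi$, giving $\mathcal F_n^2(321)\subseteq\mathcal F_n^1(321)$.

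The core is therefore to rule out an irreducible $321$-avoiding $\tau\in\mathcal{S}_n$ with $n\ge3$ and $\theta^2(\tau)=\tau$. My plan mirrors Theorem~\ref{thm:kPowerFib}: locate the cycle of $\tau$ containing $1$, argue it must have the form $(k,1,2,\dots,j)$, and then observe that the $321$ pattern which that theorem exhibits in $\theta^2(\tau)$ is---because $\theta^2(\tau)=\tau$---an honest $321$ pattern in $\tau$, contradicting $321$-avoidance. To locate $1$, note irreducibility forces $\tau_p=1$ with $p\ge2$, and $321$-avoidance forces the prefix $\tau_1<\cdots<\tau_{p-1}$ to be increasing (otherwise two such entries together with the trailing $1$ form a $321$). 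Writing $\hat\tau=\theta(\tau)$ and using $\theta^2(\tau)=\tau$ in the form $\mathrm{cycleform}(\hat\tau)=$ the parse of $\mathrm{oneline}(\tau)$ at its left-to-right maxima, this increasing prefix pins down the leading cycles of $\hat\tau$ (the singletons $(\tau_1),\dots,(\tau_{p-2})$ and a cycle containing $1$); feeding this back through the identity $\mathrm{oneline}(\hat\tau)=\mathrm{cycleform}(\tau)$ then determines the cycle of $1$ in $\tau$.

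The main obstacle is exactly this last structural step. In Theorem~\ref{thm:kPowerFib} one could use that $\theta(\tau)$ itself avoids $321$, which immediately forced ``all cycles preceding $1$ are fixed points''; here we only know that $\tau$ and $\theta^2(\tau)=\tau$ avoid $321$, while $\hat\tau=\theta(\tau)$ may genuinely contain a $321$, so that shortcut is unavailable. I would instead argue directly from the positional bookkeeping above: for example, if $p\ge3$ then $\tau_1$ is a fixed point of $\hat\tau$, so the value $\tau_1$ occupies linear position $\tau_1$ in $\mathrm{cycleform}(\tau)$; if $\tau_1$ were not the leader of its cycle it would be preceded there by $1$, forcing $\hat\tau(\tau_1-1)=1=\hat\tau(\tau_{p-1})$ and hence $\tau_1-1=\tau_{p-1}>\tau_1$, a contradiction. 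Handling the remaining sub-case, where $\tau_1$ leads the cycle of $1$, by the same kind of positional analysis should force $p=2$ and the consecutive form $(k,1,2,\dots,j)$, after which the $\theta^2(\tau)=\tau$ punchline closes the argument. The delicate part will be organizing these cycle-versus-one-line translations cleanly enough that the casework stays manageable.
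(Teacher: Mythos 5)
Your outer structure is right and matches the paper's: reduce to irreducible blocks via Lemma~\ref{lem: thetaCommutes}, note the blocks of size $1$ and $2$ are exactly the $\theta$-fixed ones from Lemma~\ref{lem:fixed}, and aim to show no irreducible $\tau\in\mathcal{F}_n^2(321)$ exists for $n\geq 3$. The endgame you describe (if the cycle of $\tau$ containing $1$ were $(k,1,2,\ldots,j)$ and came first, then the $321$ that Theorem~\ref{thm:kPowerFib} exhibits in $\theta^2(\tau)$ would sit inside $\tau$ itself) is also sound. The genuine gap is the middle: the entire structural claim --- that $1$ lies in the \emph{first} cycle of $\tau$ and that this cycle has the consecutive form $(k,1,2,\ldots,j)$ --- is exactly the part of Theorem~\ref{thm:kPowerFib} that was proved using $321$-avoidance of $\hat\tau=\theta(\tau)$, which, as you correctly note, is unavailable here. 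Your substitute ``positional bookkeeping'' is carried out only for one sub-case ($p\geq 3$ with $\tau_1$ not leading its cycle); the remaining sub-cases are asserted to work ``by the same kind of analysis,'' and you yourself flag the casework as the delicate part. As written, the theorem is not proved: the one step that actually needs a new idea is the step left open.

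The paper sidesteps this casework entirely by anchoring at $n$ rather than at $1$. Look at the last cycle of $\hat\tau$ (the one containing $n$): since $\theta(\hat\tau)=\theta^2(\tau)=\tau$ avoids $321$, that cycle must be $(n,a_1,\ldots,a_k)$ with $a_1<\cdots<a_k$, so the one-line form of $\tau$ \emph{ends} with $n\,a_1\cdots a_k$. This is the key advantage: the last cycle of $\hat\tau$ controls the suffix of $\tau$, which is directly constrained by $\tau$'s own $321$-avoidance, whereas the first cycle of $\tau$ controls the prefix of $\hat\tau$, which is not constrained. Then one positional observation finishes: $\tau_n=a_k$ forces $a_k$ to immediately follow $n$ in the one-line form of $\hat\tau$, i.e.\ $\hat\tau_{a_k+1}=a_k$; but the cycle $(n,a_1,\ldots,a_k)$ gives $\hat\tau_{a_{k-1}}=a_k$, so $a_{k-1}=a_k+1$, contradicting $a_{k-1}<a_k$ unless $k=1$; and $k=1$ forces $a_1=n-1$, so $\tau$ ends in the block $(n,n-1)$ and is reducible. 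I would either adopt this route or, if you want to keep your plan, actually complete the positional analysis for all sub-cases of the location of $1$ --- but be prepared for it to be substantially longer than the two-line argument above.
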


\begin{proof}
    By Lemma \ref{lem:fixed} $f_n^2(321) \geq F_{n+1}$, so it suffices to show that the permutations described by that lemma are exactly those in $\mathcal{F}_n^2(321)$. We argue in the same spirit as the proof of Theorem \ref{thm:kPowerFib}; that is, we show that if $n\geq 3$, a permutation in $\mathcal{F}_n^2(321)$ cannot be irreducible. For the sake of contradiction, suppose $\tau \in \mathcal{F}_n^2(321)$ is irreducible with $n \geq 3$ and let $\hat\tau=\theta(\tau)$. Consider the cycle in the standard cycle decomposition $\hat\tau$ containing the element $n$. Since $\theta(\hat\tau) = \tau$ avoids 321, this cycle must be of the form $(n,a_1, a_2, \ldots, a_k)$ where $a_1<a_2<\ldots <a_k$. Note that this also implies the one-line form of $\tau$ ends with $na_1a_2\ldots a_k.$ Since $\tau_n=a_k$, $a_k$ immediately follows $n$ in $\hat\tau$. However, if $k>1,$ we would have $\hat\tau_{a_k}=n$ and $\hat\tau_{a_{k-1}}=a_k$ with $a_{k-1}<a_k$, which is a contradiction, and so $k=1.$ But then, $\tau_{n-1}=n$ and so $\hat\tau_{n}=n-1$, which implies $a_1=n-1$. Since the final cycle is of the form $(n,n-1)$ and $n\geq 3$, this permutation is clearly not irreducible.%, $1$ must be in the first cycle of $\theta(\tau)$ if $\theta^2(\tau) = \tau$ avoids $321$. Furthermore, this cycle in $\theta(\tau)$ is of the form $(k1 \cdots j)$ with $j < k$ and $k > 2$ since $\tau$ is irreducible.
\end{proof}

Interestingly, for the remaining values of $\sigma$, the number of permutations fixed by 2 under this action avoiding the given pattern is eventually constant. Before addressing these cases, we state the following lemma about all permutations $\pi\in\S_n$ with the property that $\pi=\theta^2(\pi).$

\begin{lemma} \label{lem: order2} For $\pi\in\S_n$ with $n\geq 3$ and $\hat\pi=\theta(\pi)$, if $\pi=\theta^2(\pi)$ and $\pi_n\neq n$, then
    \begin{itemize}
    \item $\pi_j=n$ implies that %$\hat\pi_n=j$, which in turn implies  $\pi_{j+1}=j$, so 
    %there is some $j$ so that 
    $\pi_{j+1}=j$, and 
    \item if $\pi_n=r$, then $\pi_{n-1}=r+1.$
\end{itemize}
\end{lemma}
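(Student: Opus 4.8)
The plan is to exploit the symmetry of the hypothesis $\pi=\theta^2(\pi)$, which (as observed just before the lemma) says exactly that $\theta(\pi)=\hat\pi$ and $\theta(\hat\pi)=\pi$ hold simultaneously. The engine of the proof is the single structural fact that, for any $\sigma$, the cycle of the standard cycle form containing $n$ has the largest maximum, so upon removing parentheses its elements become the final block of $\theta(\sigma)$, with $n$ appearing first in that block. I would use this to translate a positional fact about one of $\pi,\hat\pi$ into a cycle-structure fact about the other, reading across the two equalities $\theta(\pi)=\hat\pi$ and $\theta(\hat\pi)=\pi$.

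For the first bullet, I would start from $\pi_j=n$. Since $n$ is largest, $\pi(j)=n$ forces $j$ to be the \emph{last} entry listed in the cycle containing $n$ in the standard cycle form of $\pi$; removing parentheses makes $j$ the final entry of $\hat\pi=\theta(\pi)$, i.e. $\hat\pi_n=j$, equivalently $\hat\pi(n)=j$. Feeding this into $\theta(\hat\pi)=\pi$: in the standard cycle form of $\hat\pi$ the cycle of $n$ now begins $(n,j,\dots)$, and since this is the last cycle, $\pi$ ends with the word $n\,j\,\cdots$. As $\pi_j=n$ places $n$ at position $j$, the value $j$ immediately following it sits at position $j+1$, yielding $\pi_{j+1}=j$.

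For the second bullet, I would reduce to the first by applying it to $\hat\pi$ in place of $\pi$. Since $\theta^2(\hat\pi)=\theta(\theta^2(\pi))=\theta(\pi)=\hat\pi$, the permutation $\hat\pi$ also satisfies the order hypothesis; moreover $\pi_n=r\neq n$ forces $\hat\pi_n\neq n$, since otherwise $n$ would have the two distinct $\hat\pi$-preimages $n$ and $r$. From $\pi_n=r$ and $\pi=\theta(\hat\pi)$, the last entry of $\pi$ is the last element of the cycle of $n$ in the cycle form of $\hat\pi$, namely $\hat\pi^{-1}(n)$; hence $\hat\pi^{-1}(n)=r$, that is $\hat\pi_r=n$. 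The first bullet, applied to $\hat\pi$ with $j=r$, then gives $\hat\pi_{r+1}=r$, i.e. $\hat\pi^{-1}(r)=r+1$. Consequently, in the cycle of $n$ within the cycle form of $\hat\pi$ the element immediately preceding the final entry $r$ is $r+1$, so $\pi=\theta(\hat\pi)$ ends with $\cdots(r+1)\,r$, giving $\pi_{n-1}=r+1$.

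The main obstacle, and the point demanding the most care, is keeping straight the three parallel encodings in play — one-line positions, values, and the final block of a $\theta$-image determined by the cycle of $n$ — so as never to conflate $\hat\pi_i$ read as a position with $\hat\pi_i$ read as a value. A secondary but essential bookkeeping step is verifying the side conditions ($j\neq n$, $r\neq n$, and $\hat\pi_n\neq n$) that legitimize invoking the first bullet for $\hat\pi$; each follows quickly from $\pi_n\neq n$, but they must be checked to avoid a degenerate out-of-range index.
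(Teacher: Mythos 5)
Your proof is correct and follows essentially the same route as the paper's (much terser) argument: translate $\pi_j=n$ into $\hat\pi_n=j$ via the cycle of $n$ being last in standard cycle form, then read $\theta(\hat\pi)=\pi$ to place $j$ right after $n$, and obtain the second bullet by applying the first to $\hat\pi$ after deriving $\hat\pi_r=n$. Your explicit checks of the side conditions ($j\neq n$, $\hat\pi_n\neq n$, the applicability of $\theta^2(\hat\pi)=\hat\pi$) are details the paper leaves implicit, but the underlying argument is identical.
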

\begin{proof}
    For the first bullet point, note that $\pi_j=n$ implies that $\hat\pi_n=j$, which in turn implies that $\pi_{j+1}=j$ (since $nj$ must follow $n$ in the standard cycle form of $\hat\pi$). For the second bullet point, if $\pi_n=r$, then we must have that $\hat\pi_r=n$ and that $\hat\pi_{r+1}=r$, implying the result.  
\end{proof}

\begin{theorem}
    For $n\geq 5$, 
    \[
    f_n^2(\sigma) = \begin{cases}
        4 & \sigma=213,\\
        3 & \sigma=132,\\
        0 & \sigma=123.
    \end{cases}
    \]
\end{theorem}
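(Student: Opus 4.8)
The plan is to handle each of the three patterns $\sigma\in\{213,132,123\}$ separately, in each case leveraging the structural constraints from Lemma~\ref{lem: order2} together with the earlier single-pattern analysis from Section~\ref{sec: SinglePattern}. The unifying strategy is this: any $\pi\in\mathcal{F}_n^2(\sigma)$ lies in $\mathcal{T}_n(\sigma)$ (since $\pi$ and $\theta(\pi)$ both avoid $\sigma$), so I can import the known shape of permutations in $\mathcal{T}_n(\sigma)$ and then impose the additional rigid requirement $\pi=\theta^2(\pi)$, which Lemma~\ref{lem: order2} translates into very specific coordinate conditions on $\pi$ whenever $\pi_n\neq n$. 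I expect the fixed-point cases (where $\pi_n=n$) to reduce cleanly to smaller $n$, so the real content is classifying the permutations with $\pi_n\neq n$.

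For $\sigma=123$, the argument should be essentially immediate from Theorem~\ref{thm:123}: since $t_n(123)=0$ for $n\geq 11$, and $\mathcal{F}_n^2(123)\subseteq\mathcal{T}_n(123)$, the set is empty for large $n$; I would then note the claimed value $0$ holds already for $n\geq 5$ by a direct check that no permutation fixed by $\theta^2$ and avoiding $123$ exists in that range (a permutation fixed by $\theta^2$ with $\geq 5$ elements is forced to have several short cycles whose largest elements, read off in cycle order, produce an increasing subsequence). For $\sigma=213$, I would use Lemma~\ref{lem: 213-fixedIntervals} to conclude that $\pi$ consists of fixed points and a single cycle on $\{n\}\cup[r,s]$, and then apply Lemma~\ref{lem: order2}: the conditions $\pi_j=n\Rightarrow\pi_{j+1}=j$ and $\pi_n=r\Rightarrow\pi_{n-1}=r+1$ severely restrict the cycle structure. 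The plan is to show these constraints, combined with $213$-avoidance of both $\pi$ and $\hat\pi$, force the nontrivial cycle to be one of a short finite list of shapes (the decreasing cycle, the shift cycle $n123\ldots(n-1)$, and one or two exceptional small patterns), yielding exactly $4$ permutations once $n\geq 5$.

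For $\sigma=132$, I would similarly invoke the $\theta^2$-conditions and the $132$-avoidance structure. The plan is to separate the fixed-point-at-$n$ case (which recurses to $f_{n-1}^2(132)$) from the case $\pi_n\neq n$; Lemma~\ref{lem: order2} pins down where $n$ and $\pi_n$ must sit, and $132$-avoidance of both $\pi$ and $\hat\pi$ should collapse the possibilities to a constant number, giving the eventual value $3$. The main obstacle I anticipate is the $\sigma=213$ and $\sigma=132$ casework: verifying that the coordinate conditions from Lemma~\ref{lem: order2} are not merely necessary but, together with double avoidance, pin the permutation down to the exact finite list requires carefully checking that no longer exotic cycle shape survives both the $\pi$-avoidance and $\hat\pi$-avoidance constraints simultaneously. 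In particular, one must rule out a whole family of "staircase" cycles by exhibiting a forced $\sigma$-pattern in $\pi$, $\hat\pi$, or $\theta^2(\pi)$; establishing that the only survivors are the explicitly named permutations is where the argument needs the most care, and I would organize it by tracking the position and image of $n$ and then propagating the constraints inward via the two bullet points of Lemma~\ref{lem: order2}. Finally, I would confirm the boundary value $n=5$ by direct computation to certify the constants $4$, $3$, and $0$.
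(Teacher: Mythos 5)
Your proposal rests on a false premise. You assert that any $\pi\in\mathcal{F}_n^2(\sigma)$ lies in $\mathcal{T}_n(\sigma)$ ``since $\pi$ and $\theta(\pi)$ both avoid $\sigma$,'' but the definition of $\mathcal{F}_n^k(\sigma)$ only requires that $\pi$ itself avoids $\sigma$ and that $\theta^k(\pi)=\pi$; nothing forces the intermediate iterate $\theta(\pi)$ to avoid $\sigma$. This is not a technicality: $\pi=3421$ satisfies $\theta^2(\pi)=\pi$ and avoids $132$ (and $213$), yet $\theta(3421)=4132$ contains $132$, so $3421\in\mathcal{F}_4^2(132)\setminus\mathcal{T}_4(132)$ --- and it is one of the permutations being counted. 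Consequently you cannot import Lemma~\ref{lem: 213-fixedIntervals} (whose hypothesis is $\pi\in\mathcal{T}_n(213)$, i.e.\ that $\hat\pi$ also avoids $213$) to get the ``fixed points plus one interval cycle'' shape, and your shortcut for $\sigma=123$ via Theorem~\ref{thm:123} collapses for the same reason: a $\theta^2$-fixed $123$-avoider need not lie in $\mathcal{T}_n(123)$, so the vanishing of $t_n(123)$ says nothing a priori. Your side remark that a $\theta^2$-fixed permutation on $\geq 5$ elements must have several short cycles is also false ($637948521$ is a single $9$-cycle fixed by $\theta^2$).

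The paper's proof avoids this trap: it uses only that $\pi$ avoids $\sigma$ together with Lemma~\ref{lem: order2}, whose content is that $\theta(\hat\pi)=\pi$, so the \emph{standard cycle form of $\hat\pi$} (not its one-line form) is constrained by the avoidance of $\sigma$ in $\pi$; it then propagates these positional constraints to force a contradiction whenever the relevant entry ($1$ for $\sigma=213$, $n$ for $\sigma=132$) is not fixed. Note also that the correct recursion for $\sigma=213$ strips a fixed point at $1$, not at $n$ (e.g.\ $14532\in\mathcal{F}_5^2(213)$ does not fix $5$), so even the reduction step you sketch would need repair. To salvage your plan you would have to redo the structural casework using only $\sigma$-avoidance of $\pi$ and the two bullet points of Lemma~\ref{lem: order2}, which is essentially what the paper does.
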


\begin{proof}
First consider the case when $\sigma=213.$
Notice that there are exactly $f_{n-1}^2(213)$ permutations in $\mathcal{F}_n^2(213)$ that have 1 as a fixed point since adding or removing 1 as a fixed point will not change whether the permutations avoids 213 or whether a permutation is fixed under $\theta^2.$ Let us show that for $n\geq 5,$ all permutations in $\mathcal{F}_n^2(213)$ have 1 as a fixed point. Since $f_4^2(213)=4,$ the result would follow. 

For the sake of contradiction, suppose $\pi\in\mathcal{F}_n^2(213)$ with $\pi_1\neq 1$ and $n\geq 5$, and let $\hat\pi=\theta(\pi).$ 
%We will consider the positions of $1,2,3,$ and $n$ in $\pi$ to arrive at a contradiction.  
First, note that since $\pi$ avoids 213 and $\pi_1\neq 1$, we must have that $n$ appears to the left of 1 in the one-line form of $\pi.$ 
Let us consider cases by $j$ where $\pi_j=n$. If $j=1,$ then by the first bullet point of Lemma \ref{lem: order2} we have $\pi=n1\pi_3\ldots\pi_n$ and thus $\hat\pi = \pi_3\ldots \hat\pi_{r-1}nr\ldots21$. Notice that $r>2$ since if $r=2$, we would have $n-1=r+1$ and thus $n=4$, which contradicts that $n\geq 5$. Suppose $\hat\pi_\ell=n-1$. Then by the second bullet point of Lemma \ref{lem: order2}, we have $\hat\pi_{\ell+1}=r+1.$ Therefore, $\pi$ contains the consecutive subsequence $\ell(n-1)2$ and ends with $(\ell+1)(r+1)r,$ which means that $\ell2(\ell+1)$ is a 213 pattern in $\pi,$ which is a contradiction.

Next, consider the case where $j\geq 2$. If $n-1$ does not appear before $n$ in $\pi$, then we have $\pi=\pi_1\ldots\pi_{j-1}nj\ldots (n-1)\ldots \pi_n$. Since $\pi$ avoids 213, it must be the case that $\pi_1<\pi_2<\ldots<\pi_{j-1}$ which implies that $\pi_{j-1}>j-1$, and thus $\pi_{j-1}>j$. But then $\pi_{j-1}j(n-1)$ is a 213 pattern. If $n-1$ does appear before $n$, it must be that $\pi_{j-1}=n-1$. But then $n-1$ is a fixed point of $\hat\pi$, so $\hat\pi$ ends with $(n-1) j$. Considering the cycle form of $\pi$, we therefore see that $\pi_{n-1}=j$, which by the first bullet point of Lemma \ref{lem: order2} implies that $j=n-2$. This combined with the second bullet point implies that $\pi$ ends with $n (n-2) (n-3)$. Since $1$ is not a fixed point of $\pi$, we must have that $n-3 = 1$. But this forces $n = 4$, so in particular there are no permutations in $\mathcal{F}_n^2(213)$ for $n\geq 5$ that do not have 1 as a fixed point.

Next, let us consider the case when $\sigma=132.$ The proof is similar. We show that when $n\geq 5,$ there are no permutations in $\mathcal{F}_n^2(132)$ that do not have $n$ as a fixed point. Since $f_4^2(132)=3$ and there are clearly $f_{n-1}^2(132)$ permutations in  $\mathcal{F}_n^2(132)$  that do have $n$ as a fixed point, the result would follow.

As before, for the sake of contradiction, we suppose $\pi\in\mathcal{F}_n^2(132)$ with $n\geq 5$ and $\pi_n\neq n$. First, suppose $\pi_1=n$. In that case, $\hat\pi_n=1$ and thus by Lemma \ref{lem: order2}, $\pi_2=1.$ Since $\pi$ avoids 132, $\pi = n123\ldots(n-1)$, which implies $\hat\pi=\theta(\pi) = n(n-1)\ldots 321,$ but $\hat\pi=\theta^{-1}(\pi) = 23\ldots n1$, which is a contradiction if $n>2.$  Now suppose $\pi_2=n$. Then $\pi_1=n-1$ and $\pi_3=2$. This implies that every element after 2 is increasing except for the element 1. So $\pi=(n-1)n23\ldots r1(r+1)\ldots (n-2)$ for some $3\leq r\leq n-2$ meaning that $\pi_{n-1}$ must be either 1 or $n-3$, both of which contradict Lemma \ref{lem: order2} since $n\geq 5$.

Next suppose $\pi_j=n$ for $2<j<n.$ Then $n-1$ appears before $n.$ Furthermore, $\pi_{j+1}=j$ by Lemma \ref{lem: order2}. This means that $\hat\pi_{n-1}\hat\pi_n=(j+1)j.$ Thus, in $\pi,$ $j+1$ follows $n-1$, and thus appears before $n.$ If $\pi_1\neq n-1,$  suppose $r$ is the position so that $\pi_r = j+1$. This means $\hat\pi_{n-2}=r$, and so $r$ follows $n-2$. Furthermore, since $n-1$ appears before $n$ and $j+1$ follows $n-1$, we have $r < j$.  However, since $\pi_1 \neq n-1$, we must have $n-2$ appear before $n-1$ to avoid $132$, but then $r(j+1)j$ is a 132 pattern in $\pi$. If $\pi_1=n-1$, then we must have that $\pi_2=j+1,$ which must be the smallest element that appears before $n$, meaning the elements $\pi_2\pi_3\ldots\pi_{j-1}$ are increasing, so $\pi_{j-1}=n-2.$ Thus $\hat\pi_{n-2}=n-1,$ but also $\hat\pi_{n-2}=2$ since $\pi_2=j+1$, giving a contradiction since $n\geq 5.$

Finally, let us consider the case when $\sigma = 123$. 
We first consider the case where $\pi_1 = n$. By Lemma \ref{lem: order2} this means $\pi_2 = 1$. This forces $\pi = n1(n-1)(n-2) \ldots 2$. Since $\theta(\hat\pi) = \pi$, this implies that $\hat\pi$ ends with $n21$, which in turn implies that $\theta(\hat\pi)$ ends with $n-2$. However, since $\pi=\theta(\hat\pi)$, this implies that $n-2=2$, so $n=4$, contradicting that $n\geq 5$. Furthermore, there is no $\pi$ with $\pi_n=n$ since such a permutation would be of the form $(n-1)(n-2)\ldots 21n,$ which does not satisfy that $\theta^2(\pi)=\pi$ when $n\geq4.$ We also cannot have $\pi_2=n$ since in this case $\hat\pi$ ends in 32, and so $3$ follows $n-1$ in $\pi$, giving us $\pi = \pi_1n2\ldots(n-1)3\ldots$. However if anything besides 1 appears between 2 and $n-1$ or after 3, we would have a 123 pattern and thus there are no such permutations for $n>6.$ So let us assume that $\pi_j=n$ for some $2<j<n.$ In the cases below, we show that there are no permutations in $\mathcal{F}^2_n(123)$ for $n>10$, but it is easily checked by computer that there are none for $5\leq n\leq 10$ as well. %but $\theta^2(51432)=51423\neq 51432.$%In this case, $\theta^2(\pi) = n34 \cdots (n-2) 1 (n-1) 2$. This does not satisfy $\theta^2(\pi) = \pi$ for $n > 4$.

%\todo{Changed some notation and did the n-1 before case first and tried to make it shorter -- you should check I didn't miss any cases! Changed some stuff in the previous paragraph too.}

First, we consider the case when $n-1$ appears before $n$ in $\pi.$ In this case since $\pi$ avoids 123, we must have $\pi_1=n-1$. By Lemma~\ref{lem: order2}, we have \[\pi = (n-1)\pi_2\ldots\pi_{j-1}nj\pi_{j+2}\ldots\pi_{n-2}(r+1)r\] for some $r$ where $\pi_2>\pi_3>\ldots>\pi_{j-1}.$ Now, since $\hat\pi_{n-1}=j+1,$ we have that $\pi_2 = j+1,$ implying that $\hat\pi_{n-2}=2$, so $(n-2)2$ appears consecutively in $\pi.$ If it appears after $n$ in $\pi$, then $\pi_{j-1}j(n-2)$ is a 123 pattern as long as $j-1\geq 3$. If $j=3,$ then we have $\pi=(n-1)4n3\ldots(n-2)2\ldots.$ Since 1 is the only element that can appear between 3 and $(n-2)$, we must have $\pi_6=2$ or $\pi_7=2$ implying that $(n-3)6$ or $(n-3)7$ appears consecutively, meaning we cannot avoid 123 if $n>10.$ Finally, if $(n-2)2$ appears before $n$ in $\pi$, either we have $(j+1)(n-2)n$ as a 123 pattern or we have $j+1=n-2$, in which case $j=n-3$, so we have $(n-1)(n-2)2\ldots n(n-3)\pi_{n-1}\pi_n$ which cannot avoid 123 if $n>8.$

Now let us consider the case where $n-1$ appears after $n$ in position $\ell>j$. We first argue that $j < n-3$. If $j = n-1$, %or equivalently $\ell = j+1$, 
then $\pi$ ends in $n (n-1)$, which means $\pi = (n-2)(n-3) \cdots 21 n (n-1)$. However, we have $\theta^2(23 \cdots (n-2) 1 n (n-1))=(n-2)(n-3) \cdots 21 n (n-1)$, so $\pi$ is not fixed under $\theta^2$ when $n>4.$
%Then, $\theta^2(\pi) = 234 \cdots (n-2) 1 n (n-1)$; this does not equal $\pi$ when $n > 4$. 
If $j = n-2$, by Lemma \ref{lem: order2}, $\pi$ ends in $n (n-2) (n-1)$, but when $n \geq 4$ this clearly contains a $123$ pattern, namely $\pi_1(n-2)(n-1)$. If $j = n-3$, by both parts of Lemma~\ref{lem: order2}, $\pi$ must end with $n(n-3)(n-1)(n-2)$, and thus contains a $123$ when $n > 4$, namely $\pi_1(n-3)(n-1)$.

We now consider when $2 < j < n-3$, so that we have \[
\pi = \pi_1\ldots \pi_{j-1}nj\pi_{j+2}\ldots \pi_{\ell-1}(n-1)(j+1)\pi_{\ell+2}\ldots \pi_n.
\]  

So $\hat\pi$ ends with $(\ell+1)(j+1)j$, meaning that $(n-2)(\ell+1)$ must appear consecutively in $\pi$ unless $\ell+1=n-2,$ in which case $n-2$ is a fixed point of $\hat\pi$ and so must appear in position $j-1$ in $\pi$. Let's consider this latter case first. In this case, since $\pi_{j-1}\pi_j=(n-2)n$, then since $j>2$, $\pi_1(n-2)n$ is a 123 pattern. For the former case, we must not have $(n-2)(\ell+1)$ appear after $n-1$ since $j(j+1)(\ell+1)$ would be a 123 pattern. It cannot appear after $n$ and before $n-1$ since we would then have the 123 pattern given by $j(\ell+1)(n-1)$ as we know $j \not= n-2$. Thus it appears before $n$, and so $\pi_1\pi_2=(n-2)(\ell+1)$, and so $\hat\pi_{n-3}=2,$ meaning $(n-3)2$ appears consecutively in $\pi$. But this implies that either $2j(j+1)$, $j(j+1)(n-3)$, or $j(n-3)(n-1)$ is a 123 pattern in $\pi.$
\end{proof}

%It becomes zero for n >4 in the 123 case, not sure if this is worth mentioning.

We end this section with a few conjectures concerning pattern-avoiding permutations fixed under $\theta^k$ for $k>2.$

\begin{conjecture}
    For $\sigma\in\{231,312\}$, the generating functions $F_\sigma^k(x)$ are rational. A few examples of conjectured generating functions for these patterns are found below.
    \[F_{\sigma}^3(x) = \frac{1}{1 - x - x^2 - 2x^3},\]
\[F_{\sigma}^4(x) = \frac{1}{1 - x - x^2 - 2x^4-x^5-x^6},\]
\[F_{\sigma}^5(x) = \frac{1}{1 - x - x^2},\]
\end{conjecture}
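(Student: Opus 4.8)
The plan is to push the same direct-sum reduction that proved the $k=2$ case all the way through. First I would record that for $\sigma\in\{231,312\}$ a permutation avoids $\sigma$ if and only if every $\oplus$-indecomposable summand does: a $312$ (resp. $231$) pattern can never straddle an $\oplus$, since the would-be largest entry of the pattern would have to sit to the left of (and below) the smaller entries, which is impossible across a direct sum. Combining this with Lemma~\ref{lem: thetaCommutes} and the fact that $\theta$ preserves irreducibility, the unique decomposition of a $\sigma$-avoider into irreducibles is carried summand-by-summand by $\theta$, so $\theta^k(\pi)=\pi$ holds exactly when $\theta^k$ fixes each irreducible summand. Writing $C^k_\sigma(x)=\sum_{n\ge 1}c^k_n(\sigma)x^n$ for the generating function of irreducible $\sigma$-avoiders fixed by $\theta^k$, the sequence construction then gives
\[
F^k_\sigma(x)=\frac{1}{1-C^k_\sigma(x)}.
\]
Thus rationality of $F^k_\sigma$ follows from rationality of $C^k_\sigma$, and it would suffice to prove the much stronger statement that $C^k_\sigma$ is a \emph{polynomial}, i.e. that for each fixed $k$ there are only finitely many irreducible $\sigma$-avoiders fixed by $\theta^k$.

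Before the main work I would reduce the two patterns to one. As in the $k=2$ theorem, where the $312$ analysis is carried out in full and the $231$ case is then obtained by ``an almost identical argument'' (a $312$-avoider splits with its first indecomposable block ending in $1$, a $231$-avoider with its last block beginning with its maximum), I would set up the $312$ computation so that the $231$ statement is its mirror image. This lets me concentrate entirely on bounding and classifying irreducible $312$-avoiders fixed by $\theta^k$.

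The crux, and the step I expect to be the genuine obstacle, is the uniform finiteness bound. The orbit $\pi\to\theta(\pi)\to\cdots\to\theta^k(\pi)=\pi$ is a cycle whose length divides $k$, and every permutation on it is irreducible. The plan is to generalize the position bookkeeping of Lemma~\ref{lem: order2} beyond the case $k=2$ (where one exploits the special identity $\hat\pi=\theta^{-1}(\pi)$, which fails for larger $k$). I would track how the top elements $n,n-1,\dots$ move along the orbit: since $\theta$ reads off the standard cycle form, the position of $n$ after one step is governed by the length of the cycle of $\pi$ containing $n$, and I would iterate this to follow the largest entries around the orbit. The rigid structure forced by $312$-avoidance of $\pi$ alone — everything after the maximum of $\pi$ is strictly decreasing, while irreducibility forbids the smallest values from occupying an initial block — should let me argue that each extra ``level'' of this forced staircase lengthens the $\theta$-orbit, so that the orbit length grows with $n$; since $\theta^k(\pi)=\pi$ caps it at $k$, the size $n$ is bounded by a function of $k$, making $C^k_\sigma$ a polynomial. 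The real difficulty is that the intermediate iterates $\theta^i(\pi)$ need \emph{not} avoid $312$, so I cannot apply pattern structure to them directly; all control must be extracted from the single $312$-avoiding representative together with the closing of the orbit after exactly $k$ steps, and producing a clean lower bound on orbit length in terms of $n$ is precisely what remains conjectural.

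Finally, with finiteness in hand I would finish the explicit cases $k=3,4,5$ by enumerating the irreducible fixed-by-$\theta^k$ $312$-avoiders, aided by computer and guided by the structural constraints above, to read off
\[
C^3_\sigma(x)=x+x^2+2x^3,\qquad C^4_\sigma(x)=x+x^2+2x^4+x^5+x^6,\qquad C^5_\sigma(x)=x+x^2.
\]
Substituting into $F^k_\sigma=1/(1-C^k_\sigma)$ reproduces the three stated generating functions. A useful sanity check sits in the prime case $k=5$: there $F^5_\sigma(x)=1/(1-x-x^2)$ is the Fibonacci generating function, which says that the only irreducible blocks are $1$ and $21$ and hence that being fixed by $\theta^5$ coincides with being fixed by $\theta$ for $312$-avoiders (matching Lemma~\ref{lem:fixed}); equivalently, there are no irreducible $312$-avoiders of $\theta$-order exactly $5$. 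Confirming that this absence of order-$5$ irreducibles is forced is exactly the kind of statement the finiteness argument of the previous paragraph would need to deliver.
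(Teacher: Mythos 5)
The statement you are proving is left as a conjecture in the paper, so there is no proof of record to compare against; the question is whether your proposal closes the gap, and it does not. Your first reduction is sound and matches what the paper actually does for $k=2$: since $231$ and $312$ are sum-indecomposable patterns, an occurrence cannot straddle a direct sum, so by Lemma~\ref{lem: thetaCommutes} and uniqueness of the decomposition into irreducible summands one gets $F_\sigma^k(x)=1/(1-C_\sigma^k(x))$ with $C_\sigma^k$ counting irreducible $\sigma$-avoiders fixed by $\theta^k$ (and your identification of the irreducible $312$-avoiders as those ending in $1$, respectively the irreducible $231$-avoiders as those beginning with $n$, is correct and is exactly the splitting used in the $k=2$ theorem). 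Your consistency checks against $C^2_\sigma=x+x^2+x^4$ and against Lemma~\ref{lem:fixed} in the case $k=5$ are also fine.

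The genuine gap is the finiteness claim: that for each fixed $k$ there are only finitely many irreducible $\sigma$-avoiders with $\theta^k(\tau)=\tau$, equivalently that $C_\sigma^k$ is a polynomial. Everything of substance rests on this, and you offer only a heuristic ("each extra level of the forced staircase lengthens the $\theta$-orbit") with no lower bound on orbit length in terms of $n$ --- and you say yourself that this is what remains conjectural. The obstacles are real: the intermediate iterates $\theta^i(\tau)$ need not avoid $\sigma$, so the $312$-structure controls only one point of the orbit; the bookkeeping of Lemma~\ref{lem: order2} exploits the identity $\theta(\hat\pi)=\pi$, i.e.\ $\hat\pi=\theta^{-1}(\pi)$, which is special to order $2$ and does not generalize; and the paper itself records that even the unrestricted count of permutations with $\pi=\theta^2(\pi)$ is open, so the structure of $\theta$-orbits is not available off the shelf. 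Without the finiteness (or at least rationality) of $C_\sigma^k$, neither the rationality claim nor the three displayed generating functions is established; the final "enumerate by computer" step only verifies coefficients up to some $n$ and cannot certify that the polynomials $x+x^2+2x^3$, $x+x^2+2x^4+x^5+x^6$, $x+x^2$ have no higher-degree terms. So the proposal is a correct reduction plus an unproven core, i.e.\ a restatement of the conjecture in terms of irreducible blocks rather than a proof.
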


\begin{conjecture}
    For $\sigma\in\{213,132\}$ and $k \geq 1$, $f_n^k(\sigma)$ is eventually constant for large enough values of $n$. Below is a table of the conjectured values when $n\geq 8$ and $1\leq k \leq 14.$.
    \begin{center}
    \begin{tabular}{c|c|c|c|c|c|c|c|c|c|c|c|c|c|c}
         $k$&  $1$& $2$& $3$ & $4$& $5$& $6$& $7$ & $8$ &$9$ & $10$ & $11$ & $12$ & $13$ &$14$\\
         \hline\hline
         $f_n^k(213)$ &$2$ &$4$ &$7$ &$9$ &$2$ &$9$ &$8$ &$9$ &$8$ &$7$ &$2$ &$16$ &$2$ &$10$\\
         \hline
         $f_n^k(132)$ & $2$ & $3$ & $6$ & $7$ & $2$ & $7$ & $8$ & $7$ & $7$ & $5$ & $2$ & $13$ & $2$ & $9$
    \end{tabular}
    \end{center}
\end{conjecture}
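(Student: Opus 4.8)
The plan is to reduce the conjecture, for both patterns simultaneously, to a single boundedness statement about irreducible permutations, exactly mirroring how the $k=1$ and $k=2$ cases were handled, and then to isolate the one genuinely hard ingredient. The engine of the reduction is a reducibility dichotomy. A permutation avoiding $213$ is reducible if and only if $\pi_1=1$: one checks that $\alpha\oplus\beta$ avoids $213$ precisely when $\alpha$ is the identity and $\beta$ avoids $213$ (any inversion of $\alpha$ together with any larger entry of $\beta$ yields a $213$), so a reducible $213$-avoider must begin with the fixed point $1$, and conversely $\pi_1=1$ gives $\pi=1\oplus\pi'$. Dually, a $132$-avoider is reducible if and only if $\pi_n=n$.

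First I would combine this with Lemma~\ref{lem: thetaCommutes} and the proposition that $\theta$ preserves irreducibility to peel a leading fixed point $1$ (for $213$) or a trailing fixed point $n$ (for $132$) without disturbing $\sigma$-avoidance or the condition $\theta^k(\pi)=\pi$. Concretely, if $\pi=1\oplus\pi'$ then $\theta^k(\pi)=1\oplus\theta^k(\pi')$, so $\theta^k(\pi)=\pi$ iff $\theta^k(\pi')=\pi'$, and the leading minimum can play no role in a $213$-pattern; the $132$ case is the mirror image. This gives a bijection between the members of $\mathcal{F}_n^k(\sigma)$ with the peelable fixed point and all of $\mathcal{F}_{n-1}^k(\sigma)$, hence the recurrence
\[
f_n^k(\sigma)=f_{n-1}^k(\sigma)+g_n^k(\sigma),
\]
where $g_n^k(\sigma)$ counts the \emph{irreducible} members of $\mathcal{F}_n^k(\sigma)$ (those with $\pi_1\neq1$ for $213$, and $\pi_n\neq n$ for $132$).

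The conjecture then follows at once from the claim that, for each fixed $k$, there are no irreducible permutations in $\mathcal{F}_n^k(\sigma)$ once $n$ exceeds some bound $N(k)$: for $n\geq N(k)$ the recurrence collapses to $f_n^k(\sigma)=f_{n-1}^k(\sigma)$, so the sequence stabilizes and $F_\sigma^k(x)$ automatically takes the rational, eventually-constant shape. To prove this boundedness I would generalize Lemma~\ref{lem: order2}. Writing a $213$-avoider as $\pi=L\,1\,R$ with the entries of $L$ occupying the top values and those of $R$ the bottom values, irreducibility forces $L$ nonempty; I would then track the elements $n$ and $1$ around the $\theta$-orbit, exploiting $\theta^{k}(\pi)=\pi$ in the form $\theta^{-1}(\pi)=\theta^{k-1}(\pi)$ to extract positional constraints of the same flavour as Lemma~\ref{lem: order2}, and feed those constraints back into the block structure of $\pi=L\,1\,R$ to manufacture a forbidden $213$ once $n$ is large relative to $k$.

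The hard part will be making this bound uniform in $k$. For $k=2$ the constraints of Lemma~\ref{lem: order2} were rigid enough to be pushed through by a finite case analysis, but for general $k$ the $\theta$-orbit of $\pi$ can be long, so the positional information recovered from $\theta^{k-1}(\pi)=\theta^{-1}(\pi)$ is correspondingly diluted, and a piecemeal argument no longer terminates. A single global mechanism seems required: for instance a transfer-matrix/automaton description of the admissible front blocks $L$ that are compatible with $\theta^k$-fixedness, or an induction over the divisors of $k$ organized by the orbit length (noting that irreducible $\sigma$-avoiders map to irreducible $\sigma$-avoiders under $\theta$, so they orbit among themselves). Controlling precisely how the orbit length interacts with $213$- (respectively $132$-) avoidance is exactly the obstruction that keeps the statement a conjecture, and I would regard establishing $N(k)$ — even non-constructively — as the crux of any proof.
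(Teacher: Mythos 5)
The statement you are addressing is stated in the paper only as a conjecture; the paper offers no proof, so there is nothing to compare your argument against except the finished cases $k=1,2$. Your reduction is sound and is exactly the skeleton the paper uses there: the dichotomy that a $213$-avoider is reducible iff $\pi_1=1$ (and a $132$-avoider iff $\pi_n=n$) is correct, Lemma~\ref{lem: thetaCommutes} does let you peel that fixed point without disturbing either $\sigma$-avoidance or the condition $\theta^k(\pi)=\pi$, and the resulting recurrence $f_n^k(\sigma)=f_{n-1}^k(\sigma)+g_n^k(\sigma)$, with $g_n^k(\sigma)$ counting irreducible members of $\mathcal{F}_n^k(\sigma)$, is valid. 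This is precisely how the paper's proofs of $f_n^2(213)=4$ and $f_n^2(132)=3$ proceed (showing $g_n^2(\sigma)=0$ for $n\geq 5$ by hand).

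However, what you have written is a reduction, not a proof, and the gap is exactly where you locate it: you never establish that $g_n^k(\sigma)=0$ for $n$ larger than some $N(k)$. Lemma~\ref{lem: order2} gives only two positional constraints extracted from $\theta^2(\pi)=\pi$, and the paper needs a lengthy case analysis even to exploit those for $k=2$; for general $k$ the relation $\theta^{k-1}(\pi)=\theta^{-1}(\pi)$ yields constraints that weaken as $k$ grows, and you offer no mechanism (transfer matrix, orbit-length induction, or otherwise) that actually terminates. You explicitly concede that controlling this interaction ``is exactly the obstruction that keeps the statement a conjecture,'' which is an accurate self-assessment. A secondary omission: the conjecture asserts specific stable values for $1\leq k\leq 14$, and even granting the existence of $N(k)$ you would still need to compute $f_{N(k)}^k(\sigma)$ for each such $k$ and to know $N(k)$ explicitly (the claim is stability from $n\geq 8$ on, which a non-constructive bound would not deliver). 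So the proposal correctly isolates the hard part but does not resolve it; the statement remains open.
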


Oddly, we notice that some periodicity seems to appear as we increase $k$. For example, for any fixed $n \geq 1$ and $\sigma \in S_3$, $f_n^k(\sigma)$ appears to be the same for \[k \in \{1,5,11,13, 19, 23, 29, 31, 37, 41, 43, 53, 55, 59, 65, 67, 71, 73, 79, 83, 89, 95, 97,\dots\},\]
as well as for $k\in\{2, 26, 46, 58, 62, 74, 82, 86,\dots\}$, for $k\in\{3, 15, 33, 39, 57, 69, 87, 93,\dots\},$ and for $k\in\{4, 8, 16, 32, 52, 64, 92,\dots\}$. %We do not generally have a conjecture for when this repetition occurs, only in a specific example.
This leads to the general question below.

\begin{question}
    For a given $\sigma \in S_3$, when is $f_n^i(\sigma) = f_n^j(\sigma)$ for all $n\geq 1$?
\end{question}

There are certain patterns one might recognize in these values of $k$. One such possibility are the conjectures below.

\begin{conjecture}
    For $i,j \geq 2$, $n \geq 1$ and $\sigma \in S_3$, $f_{n}^{2^i}(\sigma) = f_n^{2^j}(\sigma)$.
\end{conjecture}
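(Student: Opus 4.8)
The plan is to recast the statement as a bound on the $\theta$-order of $\sigma$-avoiding permutations. Write $\mathrm{ord}(\pi)$ for the least $d\geq1$ with $\theta^d(\pi)=\pi$. Then $\pi\in\mathcal{F}_n^{2^i}(\sigma)$ exactly when $\pi$ avoids $\sigma$ and $\mathrm{ord}(\pi)\mid 2^i$, i.e. $\mathrm{ord}(\pi)$ is a power of $2$ not exceeding $2^i$. Since $4\mid 8\mid 16\mid\cdots$, we have the increasing chain $\mathcal{F}_n^4(\sigma)\subseteq\mathcal{F}_n^8(\sigma)\subseteq\cdots$, so the desired equality $f_n^{2^i}(\sigma)=f_n^{2^j}(\sigma)$ for all $i,j\geq2$ is equivalent to the single assertion that this chain is constant, namely that \emph{no $\sigma$-avoiding permutation has $\theta$-order $8,16,32,\dots$}, equivalently that every $\sigma$-avoider whose order is a power of $2$ has order dividing $4$. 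This is the statement I would actually prove.

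Next I would reduce to irreducible permutations. By Lemma~\ref{lem: thetaCommutes} and the fact that $\theta$ preserves irreducibility, if $\pi=\bigoplus_{j}\tau_j$ is the decomposition into irreducible summands, then $\theta^k(\pi)=\bigoplus_j\theta^k(\tau_j)$, so $\mathrm{ord}(\pi)=\mathrm{lcm}_j\,\mathrm{ord}(\tau_j)$; when all summand orders are powers of $2$ this lcm equals their maximum. For $\sigma\in\{231,312,321\}$ a length-$3$ occurrence cannot straddle two summands, since its smallest (final) entry would then have to lie in a strictly larger-valued later block; hence $\pi$ avoids $\sigma$ iff every $\tau_j$ does. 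Therefore a $\sigma$-avoider of power-of-$2$ order exceeding $4$ would contain an irreducible $\sigma$-avoiding summand of the same property, and it suffices to rule those out. For $\sigma=123$ an Erd\"os--Szekeres argument on the number of cycles, exactly as in Theorem~\ref{thm:123}, should give $f_n^{2^i}(123)=0$ for $n$ large and every $i$, so the claim holds trivially there. For $\sigma\in\{132,213\}$, where cross-block occurrences are possible, I would instead extend the peeling arguments of the order-$2$ theorems to show that for large $n$ any permutation in $\mathcal{F}_n^{2^i}(\sigma)$ must fix $n$ (for $132$) or $1$ (for $213$); removing that fixed point reduces to a bounded ``core,'' and one then argues that the set of cores is the same for every $i\geq2$.

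The heart of the matter is to classify the irreducible $\sigma$-avoiders of power-of-$2$ order and show the order is at most $4$. Here I would develop a positional lemma for $\theta^4$-fixed permutations analogous to Lemma~\ref{lem: order2}: since $\mathrm{ord}(\pi)\mid 2^i$ forces $\theta^{2^{i-1}}=\theta^{-2^{i-1}}$ on $\pi$, one can track how the positions of the large entries $n,n-1,\dots$ and the small entries $1,2,\dots$ migrate under $\theta^2$ and extract forced consecutive-entry relations. Assuming $\mathrm{ord}(\pi)=8$ (fixed by $\theta^8$ but not $\theta^4$), the plan is to run a case analysis on the location of $n$ and on whether $n-1$ precedes it, mirroring the order-$2$ proofs, and to show every branch forces an occurrence of $\sigma$, contradicting avoidance.

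The main obstacle is precisely this higher-order analysis. In the order-$2$ case the relation $\hat\pi=\theta^{-1}(\pi)$ pins entries down in both directions and makes the forcing tight, whereas for order $8$ one controls only $\theta^{\pm2}$, the induced positional constraints are weaker, and the case tree branches substantially. A further, and I expect genuine, bottleneck is passing from order $8$ to all higher powers $16,32,\dots$: because the intermediate iterates $\theta^{2^{i-1}}(\pi)$ need not avoid $\sigma$, one cannot simply collapse an order-$2^i$ example to an order-$8$ one. Overcoming this would require either a bootstrapping argument showing that an irreducible $\sigma$-avoider of order $2^i$ with $i\geq3$ induces an order-$8$ $\sigma$-avoiding sub-configuration, or a uniform structural treatment of the self-inverse relation $\theta^{2^{i-1}}=\theta^{-2^{i-1}}$ valid for all $i$. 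I anticipate this uniform higher-power step to be the crux of the proof.
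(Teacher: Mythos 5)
The statement you are addressing is an open \emph{conjecture} in the paper: the authors offer no proof, only numerical evidence, so there is no argument of theirs to compare yours against. Judged on its own terms, your proposal begins soundly. The reformulation ($f_n^{2^i}(\sigma)=f_n^{2^j}(\sigma)$ for all $i,j\geq 2$ iff no $\sigma$-avoiding permutation has $\theta$-order $2^i$ with $i\geq 3$) is correct, since the sets $\mathcal{F}_n^{4}(\sigma)\subseteq\mathcal{F}_n^{8}(\sigma)\subseteq\cdots$ are nested. The reduction to irreducible summands via Lemma~\ref{lem: thetaCommutes}, together with the observation that an occurrence of $231$, $312$, or $321$ cannot straddle two blocks of a direct sum, is also correct and mirrors how the paper handles $\mathcal{T}_n^k(\sigma)$ and $\mathcal{F}_n^2(321)$.

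However, the proposal does not prove the statement, and you say as much: the entire content of the conjecture is the claim that there is no irreducible (or, for $132$ and $213$, no ``core'') $\sigma$-avoiding permutation of $\theta$-order $8, 16, 32,\dots$, and your plan offers only a hoped-for case analysis for order $8$ plus an unspecified ``bootstrapping'' or ``uniform'' argument for higher powers. Nothing in the paper's order-$2$ machinery (Lemma~\ref{lem: order2} and the surrounding case analyses) is shown to extend; as you note, $\theta^{2^{i-1}}=\theta^{-2^{i-1}}$ gives far weaker positional constraints than $\hat\pi=\theta^{-1}(\pi)$, and intermediate iterates need not avoid $\sigma$, so no collapse from order $2^i$ to order $8$ is available. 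There is also a concrete misstep in the $123$ case: the Erd\H{o}s--Szekeres argument of Theorem~\ref{thm:123} bounds the number of cycles by using that $\theta(\pi)$ avoids $123$, but membership in $\mathcal{F}_n^{2^i}(123)$ imposes no avoidance condition on $\theta(\pi)$, only on $\pi$ itself, so that theorem does not apply as stated; the paper's own proof that $f_n^2(123)=0$ for $n\geq 5$ required a separate, lengthy argument. In short: correct framing and a reasonable reduction, but the decisive step is missing, so the conjecture remains open.
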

\begin{conjecture}
    For $i\geq 1$, $n \geq 1$ and $\sigma \in S_3$, $f_{n}^{i}(\sigma) = f_n^{1}(\sigma)$ if and only if $f_{n}^{3i}(\sigma) = f_n^{3}(\sigma)$ .
\end{conjecture}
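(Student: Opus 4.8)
The plan is to recast the whole question in terms of the $\theta$-orbit structure. For a $\sigma$-avoiding permutation $\pi\in\S_n$, let $d(\pi)$ denote the length of its orbit under $\theta$ (the least $d\geq 1$ with $\theta^d(\pi)=\pi$), and for each $d\geq 1$ let $g_n^d(\sigma)$ be the number of $\sigma$-avoiding $\pi\in\S_n$ with $d(\pi)=d$. Since $\pi$ is fixed by $k$ exactly when $d(\pi)\mid k$, we have the divisor sum $f_n^k(\sigma)=\sum_{d\mid k} g_n^d(\sigma)$. Writing $O=O_n(\sigma)=\{d\geq 1: g_n^d(\sigma)>0\}$ for the set of realized orbit lengths (note $1\in O$), the equality $f_n^i(\sigma)=f_n^1(\sigma)$ holds iff no $d\in O$ with $d>1$ divides $i$, while $f_n^{3i}(\sigma)=f_n^3(\sigma)$ holds iff no $d\in O$ with $d\notin\{1,3\}$ divides $3i$. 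The conjecture is thus a purely arithmetic statement about the set $O$.

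First I would isolate the two structural facts about $O$ that force the biconditional: (F1) if $d\in O$ with $3\mid d$ and $d>3$, then $d/3\in O$; and (F2) if $3\in O$ then $9\in O$. Granting these, the equivalence is a short $3$-adic argument. For $f_n^i=f_n^1\Rightarrow f_n^{3i}=f_n^3$: a putative $d\in O$ with $d\notin\{1,3\}$ and $d\mid 3i$ either has $3\nmid d$, forcing $d\mid i$ with $d>1$, or has $3\mid d$ and $d>3$, so that $d/3\in O$ by (F1) with $d/3\mid i$ and $d/3>1$; either way this contradicts $f_n^i=f_n^1$. For the converse, a witness $d\in O$, $d>1$, $d\mid i$ to the failure of $f_n^i=f_n^1$ must equal $3$ (else it already contradicts $f_n^{3i}=f_n^3$), whereupon (F2) gives $9\in O$ with $9\mid 3i$ and $9\notin\{1,3\}$, again a contradiction. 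Both (F1) and (F2) also hold in every computed instance, and suitable choices of $i$ (e.g.\ $i=3$ when $3\in O$ but $9\notin O$) turn a failure of either fact into an explicit counterexample, so the conjecture should in fact be equivalent to ``(F1) and (F2) hold for all $n$ and $\sigma\in\S_3$.''

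To attack (F1) and (F2) I would exploit Lemma~\ref{lem: thetaCommutes}. For the sum-indecomposable patterns $\sigma\in\{231,312,321\}$, a permutation avoids $\sigma$ iff each of its irreducible summands does, and since $\theta$ respects the finest direct-sum decomposition (commuting with $\oplus$ and preserving irreducibility), $\theta^k(\pi)=\pi$ iff $\theta^k$ fixes every summand. Hence $d(\pi)$ equals the least common multiple of the orbit lengths of the irreducible summands of $\pi$, and $O_n(\sigma)$ is exactly the set of lcm's realizable by irreducible $\sigma$-avoiding blocks whose sizes sum to $n$. In this language (F1) asks us to lower the $3$-part of such an lcm by one factor while preserving the total size $n$, and (F2) asks us to raise it; the plan is to prove these by building the required blocks from the classification of irreducible $\sigma$-avoiding permutations (as in Theorem~\ref{thm: 321-Irred} and the structure results of Section~\ref{sec: SinglePattern}) together with their orbit lengths. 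For $\sigma=123$ the statement is vacuous for large $n$ by Theorem~\ref{thm:123}.

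The main obstacle is twofold. For the decomposable patterns $\sigma\in\{132,213\}$ the block-lcm reduction breaks down, since $\sigma$-avoidance no longer localizes to irreducible summands; one is then forced into the kind of delicate case analysis already seen in Section~\ref{sec: fixedOrder}, but now for arbitrary orbit length rather than only $\theta^2$-fixed permutations, and pinning down exactly which orbit lengths occur appears genuinely hard. Even for $\{231,312,321\}$, verifying (F1) and (F2) demands a complete understanding of the orbit lengths of \emph{irreducible} pattern-avoiding permutations, which is precisely the kind of cyclic/structured avoidance problem flagged as open in the introduction. I expect the crux to be constructing the length-$d/3$ (and length-$9$) witnesses of the prescribed total size $n$ uniformly in $n$.
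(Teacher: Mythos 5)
This statement is one of the paper's open conjectures; the paper offers no proof of it, only computational evidence for the periodicity of $f_n^k(\sigma)$ in $k$. Your reformulation is the right one and is carried out correctly: since $\mathcal{F}_n^k(\sigma)$ consists of the $\sigma$-avoiding permutations whose exact $\theta$-order divides $k$, the divisor-sum identity $f_n^k(\sigma)=\sum_{d\mid k}g_n^d(\sigma)$ holds, the two equalities in the conjecture translate into the arithmetic conditions you state on the set $O$ of realized orbit lengths, and your $3$-adic derivation of the biconditional from (F1) and (F2) is valid. (The parenthetical ``note $1\in O$'' is false for $\sigma=123$ and $n\geq 5$, but nothing in the argument uses it.) The block-lcm reduction for $\sigma\in\{231,312,321\}$ is also sound, since these patterns are sum-indecomposable and $\theta$ commutes with $\oplus$ and preserves irreducibility by Lemma~\ref{lem: thetaCommutes} and the following proposition.

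The genuine gap is that (F1) and (F2) are never proved, and they are where all the content lies: establishing them requires knowing, for each $n$ and $\sigma$, essentially the full set of $\theta$-orbit lengths realized by $\sigma$-avoiding permutations, a problem at least as hard as the questions the paper leaves open (it already takes the paper several pages of case analysis just to handle exact order $1$ and $2$). So the proposal is an honest and useful reduction, not a proof. Two specific points to repair if you pursue it. First, your claim that the $\sigma=123$ case is vacuous by Theorem~\ref{thm:123} is wrong: that theorem concerns permutations $\pi$ for which $\theta(\pi)$ \emph{also} avoids $123$, whereas $\mathcal{F}_n^k(123)$ only requires $\pi$ itself to avoid $123$ while satisfying $\theta^k(\pi)=\pi$; the other elements of the orbit may contain the pattern, so $O_n(123)$ need not be trivial. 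Second, your claimed equivalence of the conjecture with ``(F1) and (F2)'' is only proved in one direction: a failure of (F2) does yield a counterexample with $i=3$, but a failure of (F1) at some $d$ does not obviously produce one, since taking $i=d/3$ only helps if no \emph{other} element of $O$ exceeding $1$ divides $d/3$. Neither issue affects the (valid) implication that (F1) and (F2) together imply the conjecture, but the conditional nature of the whole argument should be stated plainly.
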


\section{Further research} \label{sec: Conclusion}

There are some natural directions for future research in this area, including enumerating permutations that avoid a pattern or set of patterns whose image under $\theta$ avoids another pattern or set of patterns. One might also consider consecutive and vincular patterns (especially in light of the characterization of shallow permutations in terms of vincular patterns, as mentioned in the introduction).

One might also consider questions about the fundamental bijection unrelated to pattern avoidance. Let $\mathcal{F}_n^k$ denote the number of permutations in $\mathcal{S}_n$ fixed by $k$ under the group action $\star$ described in Section~\ref{sec: fixedOrder} (i.e., those permutations with $\pi=\theta^k(\pi)$), and let $f_n^k = |\mathcal{F}_n^k|$. As we mentioned in the previous section, the results on order two permutations required a lot of case work despite their simple answers. We note that another approach to this problem is to first understand the structure of permutations $\pi \in \mathcal{S}_n$ such that $\pi = \theta^2(\pi)$, an analogue of Lemma \ref{lem:fixed}. Equipped with this tool, one should be able to more easily determine which of these permutations avoid a given pattern. This is also an interesting question in its own right.

\begin{question}
    How many $\pi \in \mathcal{S}_n$ satisfy $\pi = \theta^2(\pi)$? %What does the cycle decomposition of such $\pi$ look like?
\end{question}

In other words, what is $f_n^2$? Is there a nice way to understand how the permutations in $\mathcal{F}_n^2$ decompose into cycles, as in Lemma \ref{lem:fixed} for those permutations fixed under $\theta$? We are able to prove a lower bound:

\begin{proposition}
    For $n > 9$,
    \[
    f_n^2 \geq f_{n-1}^2 + f_{n-2}^2 + 2 f_{n-4}^2 + 2 f_{n-9}^2.
    \]
\end{proposition}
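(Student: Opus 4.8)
The plan is to reduce the count $f_n^2$ to a count of \emph{irreducible} building blocks, and then obtain the lower bound by exhibiting enough such blocks of the relevant sizes. The engine is Lemma~\ref{lem: thetaCommutes} together with the fact (proved just after it) that $\theta$ preserves irreducibility. Since every permutation factors uniquely as a direct sum of irreducible permutations, and $\theta^2$ both commutes with $\oplus$ and carries irreducibles to irreducibles of the same size, a permutation with irreducible decomposition $\pi = \beta_1 \oplus \cdots \oplus \beta_t$ satisfies $\pi = \theta^2(\pi)$ if and only if $\beta_i = \theta^2(\beta_i)$ for every $i$. Writing $a_m$ for the number of irreducible $\beta \in \S_m$ with $\theta^2(\beta) = \beta$, this yields the exact relation $f_n^2 = \sum_{m=1}^{n} a_m\, f_{n-m}^2$ with $f_0^2 = 1$; for the proposition only the inequality direction, coming from the injection $(\beta,\rho)\mapsto\beta\oplus\rho$, is needed.

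For the bound I only require a handful of the $a_m$. First I would record the easy blocks: the single fixed point gives $a_1 \geq 1$, and $21 = (2,1)$ is irreducible and $\theta$-fixed (hence $\theta^2$-fixed), so $a_2 \geq 1$. For $a_4$ I would exhibit the pair $3421$ and $4132$: computing standard cycle forms gives $\theta(3421)=4132$ and $\theta(4132)=3421$, so each is fixed by $\theta^2$ (but not by $\theta$), and both are irreducible, giving $a_4 \geq 2$. The last and most delicate input is $a_9 \geq 2$, obtained by exhibiting two irreducible permutations of $\S_9$ that together form a single $\theta$-orbit of length two and checking irreducibility directly.

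With these inputs, the proposition follows by discarding nonnegative terms. Since every $a_m \geq 0$,
\[
f_n^2 = \sum_{m=1}^{n} a_m\, f_{n-m}^2 \;\geq\; a_1 f_{n-1}^2 + a_2 f_{n-2}^2 + a_4 f_{n-4}^2 + a_9 f_{n-9}^2 \;\geq\; f_{n-1}^2 + f_{n-2}^2 + 2 f_{n-4}^2 + 2 f_{n-9}^2,
\]
where the hypothesis $n > 9$ ensures all four indices $n-1, n-2, n-4, n-9$ are at least $1$, so each term is a genuine summand. Alternatively, one avoids the full equality entirely: for fixed irreducible $\theta^2$-fixed $\beta$ of size $m$, the map $\rho \mapsto \beta \oplus \rho$ embeds $\mathcal{F}_{n-m}^2$ into $\mathcal{F}_n^2$, the first irreducible block recovers $\beta$, and images for distinct $\beta$ are disjoint; summing these disjoint families over the blocks of sizes $1,2,4,9$ (with multiplicities $\geq 1,1,2,2$) gives exactly the claimed inequality.

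The hard part will be the size-$9$ case: unlike sizes $1,2,4$, there is no transparent pattern generating irreducible $\theta^2$-fixed permutations, so the two required blocks of $\S_9$ must essentially be located and verified by hand or by computer, and one must confirm they are genuinely distinct and irreducible. The remaining ingredients—the unique irreducible decomposition, that $\theta^2$ respects it, and the disjointness of the direct-sum images—are routine once Lemma~\ref{lem: thetaCommutes} and the irreducibility-preservation result are invoked.
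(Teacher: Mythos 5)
Your approach is exactly the paper's: reduce to the irreducible blocks via Lemma~\ref{lem: thetaCommutes} (using that $\theta$ preserves irreducibility and commutes with $\oplus$, so $\pi=\theta^2(\pi)$ iff every irreducible summand is $\theta^2$-fixed), and then exhibit enough irreducible $\theta^2$-fixed blocks of sizes $1,2,4,9$. The one piece you leave unverified --- the two irreducible $\theta^2$-fixed permutations in $\S_9$ --- is supplied explicitly in the paper: $637948521$ and $916823754$, which one checks form a $\theta$-orbit of length two and are irreducible (each has a letter exceeding its position in every proper prefix). With those witnesses written down, your argument is complete and coincides with the paper's proof.
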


\begin{proof}
    By Lemma \ref{lem: thetaCommutes} if $\pi = \bigoplus_{i=1}^m \tau_i$ where the $\tau_j$ are irreducible, then $\theta^2(\pi) = \pi$ implies that $\theta^2(\tau_j) = \tau_j$ for each $j$. Furthermore, given any irreducible $\tau_1,\dots,\tau_m$ with $\theta^2(\tau_j) = \tau_j$, $\pi = \bigoplus_{i=1}^m \tau_i$ satisfies $\theta^2(\pi) = \pi$. So it suffices to determine the number of irreducible $\tau$ with $\tau = \theta^2(\tau)$. Any $\pi \in \mathcal{F}_n^2$ will be a direct sum of these $\tau$.

    To prove the stated lower bound, it suffices to find the irreducible permutations that $\pi$ could end with. The result then follows from noticing that $\tau = 1$, $\tau = 21$, $\tau = 3421$, $\tau = 4132$, $\tau = 637948521$ and $\tau = 916823754$ are all irreducible permutations with $\theta^2(\tau) = \tau$. So there is one such permutation of sizes $1$ and $2$, and two such permutations of sizes $4$ and $9$. This establishes the desired lower bound.
\end{proof}

It is possible this inequality is sharp. This would true if one could prove that for $n \geq 10$ there are no irreducible permutations $\tau\in\S_n$ with $\theta^2(\tau) = \tau$. We note that if and irreducible permutation $\tau$ satisfies $\theta^2(\tau) = \tau$ then so does its (irreducible) image $\theta(\tau)$. Since for irreducible permutations, $\theta(\tau) = \tau$ only for $\tau$ or size $1$ or $2$ by Lemma \ref{lem:fixed}, all other such irreducible $\tau$ will come in distinct pairs. %This means if there are eventually no more irreducible order two permutations, $f_n^2$ will satisfy a recursion of the form $f_n^2 = \sum_{j=1}^m 2k_j f_{n-i_j}^2$ since $\theta(\tau) = \tau$ only when $n = 1,2$. 
We present the following table of $f_n^2$ values for $n \leq 11$:

\begin{center}
    \begin{tabular}{c|c|c|c|c|c|c|c|c|c|c|c}
         $n$&  $1$& $2$& $3$ & $4$& $5$& $6$& $7$ & $8$ &$9$ & $10$ & $11$ \\
         \hline
         $f_n^2$ &$1$ &$2$ &$3$ &$7$ &$12$ &$23$ &$41$ &$78$ &$145$ &$271$ &$502$\\
    \end{tabular}
\end{center}

Naturally, you can extend this open question to any number of iterations of $\theta,$ asking: How many permutations $\pi\in\S_n$ satisfy $\pi=\theta^k(\pi)$?

\end{document}